\numberwithin{equation}{section}
\newcommand{\field}[1]{\mathbb{#1}}
\newcommand{\N}{\field{N}}
\newcommand{\C}{\mathbb{C}}
\newcommand{\Rn}{\mathbb{R}^{n-1}}
\newcommand{\Zn}{\mathbb{Z}^{n-1}}
\newcommand{\Tn}{\mathbb{T}^{n-1}}
\newcommand{\la}{\lambda}
\newcommand\norm[1]{\left\lVert#1\right\rVert}
\newcommand{\lpnorm}[3]{\norm{#1}_{L^{#2}(#3)}}
\newcommand{\altlpnorm}[2]{\norm{#1}_{#2}}
\newcommand{\mixlpnorm}[4]{\norm{#1}_{L^{#2}_{t} L^{#3}_{x}(#4)}}
\newcommand{\litlpnorm}[3]{\norm{#1}_{l^{#2}(#3)}}
\newcommand{\Lpopnormalt}[5]{\norm{#1}_{L^{#2}_{#3} \to L^{#4}_{#5} }}
\newcommand{\lpn}[2]{\norm{#1}_{#2}}
\newcommand{\supp}[1]{\text{supp}\, #1}
\newcommand{\fsupp}[1]{\text{supp}\, \widehat{#1}}
\newcommand{\R}[1][]{\mathbb{R}^#1}
\newcommand{\Z}[1][]{\mathbb{Z}^#1}
\newcommand{\T}[1][]{\mathbb{T}^#1}
\newcommand{\parab}[1][]{\mathbb{P}^#1}
\newcommand{\laplac}[1]{\Delta_{#1}}
\newcommand{\Sl}{S_{\la}}
\newcommand{\extensionop}[2]{E_{#1}^{\la}{#2}}
\newcommand{\dK}[1]{\delta_{K}^{#1}}
\newcommand{\Sq}[3]{Q_{#3}^{#2}#1}
\newtheorem{proposition}{Proposition}[section]
\newtheorem{theorem}{Theorem}[section]
\newtheorem{corollary}{Corollary}[theorem]
\newtheorem{lemma}[theorem]{Lemma}
\newtheorem{remark}{Remark}[section]
\begin{document}

\title{Lossless Strichartz Estimates on Rectangular Tori over Short Time Intervals}
\author{Connor Quinn}

\begin{abstract}
    We prove lossless Strichartz estimates at the critical exponent $q_c = \frac{2(n+1)}{n-1}$ and the endpoint exponent pair $\left(2,\frac{2(n-1)}{n-3}\right)$ for the Schr\"{o}dinger equation on rectangular tori of dimension $n-1$ with frequency localized initial data on small time windows with length depending on the frequency parameter $\lambda \gg 1$. This work improves on previous results of Burq-Gerard-Tzvetkov \cite{bgtstrichartzmanifold} for general compact manifolds and Blair-Huang-Sogge \cite{BlairHuangSoggeStrichartz} and Huang-Sogge \cite{HuangSoggeStrichartz} for compact manifolds of nonpositive curvature by showing that we can take the time interval to be of length $\lambda^{-\delta_n}$ for some $\delta_n \in (0,1)$ depending on the dimension of the torus. The proof uses Tao’s bilinear restriction theorem for the paraboloid from \cite{taobilinear} and an iteration argument over multiple scales that takes advantage of improved kernel estimates arising from the Fourier multiplier decomposition we employ.
\end{abstract}
\maketitle

\section{Introduction}
Let $(M,g)$ be a compact Riemannian manifold without boundary of dimension $d \geq 2$. We will let $n$ denote the space-time dimension (i.e the dimension of $M \times \mathbb{R}$) so that $d = n-1$. Let $\laplac{g}$ denote the Laplace-Beltrami operator associated to the metric and let
\begin{equation}
    u(x,t) = e^{-it\laplac{g}}f(x)
\end{equation}
be the solution to the Schr\"{o}dinger equation on $M \times \R{}$
\begin{equation} \label{schrodinger equation}
    i\partial_{t} u(x,t) = \laplac{g}u(x,t), \ u(x,0) = f(x).
\end{equation}
The operator $e^{-it\laplac{g}}$ is called the Schr\"{o}dinger propagator. 

One set of tools used to analyze the space-time dispersive properties of these solutions are Strichartz estimates, which on a compact manifold take the form

\begin{equation} \label{abstract strichartz}
    \mixlpnorm{u}{p}{q}{M \times [0,1]} \lesssim \norm{f}_{H^s(M)}
\end{equation}
where the mixed norm is defined as
\begin{align*}
    \mixlpnorm{F}{p}{q}{M \times I} = \Big(\int_{I}\norm{F(\cdot, t)}_{L^{q}(M)}^pdt\Big)^{\frac{1}{p}}
\end{align*}
for some interval $I \subset \R{}$
and 
\begin{align*}
    \norm{f}_{H^s(M)} = \lpnorm{\big( I + P\big)^{s}f}{2}{M}, \ \ \text{with} \ P = \sqrt{-\laplac{g}}, 
\end{align*}
is the standard $L^2$-based Sobolev space on $M$. The case where the exponent pair $(p,q)$ in (\ref{abstract strichartz}) is Keel-Tao admissible, meaning it satisfies the relation

\begin{equation}
    d\left(\frac{1}{2} - \frac{1}{q}\right) = \frac{2}{p}, \ \text{and} \ 2 < q \leq \frac{2d}{d-2} \  \text{if} \ d \geq 3, \ \text{or} \ 2 < q < \infty \ \text{if} \ d = 2,
\end{equation}
is of particular interest. 

The study of such inequalities in this setting was first initiated by Burq-Gerard-Tzvetkov in \cite{bgtstrichartzmanifold} where they showed that (\ref{abstract strichartz}) holds for admissible $(p,q)$ if $s \geq \frac{1}{p}$. To show this, they actually proved a stronger uniform inequality on a short time interval, namely that
\begin{equation} \label{short time dyadic estimate}
    \mixlpnorm{e^{-it\laplac{g}} \beta(P/ \la)f}{p}{q}{M \times [0, \la^{-1}]} \leq C \altlpnorm{f}{2} , \ \la \gg 1.
\end{equation}
where $\beta \in C^{\infty}_{0}((\frac{1}{2}, 2))$ is a Littlewood-Paley bump function. By writing $[0,1]$ as a union of $\sim \la$ intervals of length $\la^{-1}$, it follows that (\ref{short time dyadic estimate}) implies
\begin{equation}
    \mixlpnorm{e^{-it\laplac{g}} \beta(P/ \la)f}{p}{q}{M \times [0, 1]} \leq C \la^{\frac{1}{p}} \altlpnorm{f}{2} , \ \la \gg 1
\end{equation}
by adding up the uniform estimates on the short time intervals. This then implies (\ref{abstract strichartz}) with $s \geq \frac{1}{p}$ using Littlewood-Paley theory. The restriction $s \geq \frac{1}{p}$ is necessary on a general compact manifold since the derivative loss of $\frac{1}{2}$ for the endpoint Strichartz estimate where $(p,q) = \big(2, \frac{2d}{d-2}\big)$ for $d \geq 3$ is sharp on the standard round sphere $S^d$. One can see this by taking the initial data in (\ref{schrodinger equation}) to be a zonal eigenfunction. 

However, on compact manifolds with other types of geometries, the restriction $s \geq \frac{1}{p}$ can be improved. For example, if we assume that $M$ has nonpositive sectional curvature, then Huang-Sogge recently showed in \cite{HuangSoggeStrichartz} that we can get a logarithmic improvement over the results of Burq-Gerard-Tzvetkov
\begin{equation} \label{Huang-Sogge Strichartz}
    \mixlpnorm{u}{p}{q}{M \times [0,1]} \lesssim \lpnorm{(I+P)^{\frac{1}{p}}\log{(2I+P)}^{-\frac{1}{p}}f}{2}{M}.
\end{equation}
Huang-Sogge obtained their result by following the main idea of Burq-Gerard-Tzvetkov and proving a uniform estimate over short time intervals. However, the estimate (\ref{short time dyadic estimate}) over an interval of length $\sim \la^{-1}$ cannot be improved on any manifold, which can be seen by taking $f(x) = \beta(P/\la)(x,x_0)$ where $\beta(P/\la)(x,y)$ is the kernel of the Littlewood-Paley operator and $x_0$ is an arbitrary point on $M$. To obtain their improvement, Huang-Sogge proved lossless estimates on longer time intervals than the ones used by Burq-Gerard-Tzvetkov. In particular, Huang-Sogge showed that
\begin{equation} \label{Huang-Sogge small time Strichartz}
    \mixlpnorm{e^{-it\laplac{g}}\beta(P / \la)f}{p}{q}{M \times [0,\frac{\log\la}{\la}]} \leq C \altlpnorm{f}{2}.
\end{equation}
This paper followed earlier work by Blair-Huang-Sogge  \cite{BlairHuangSoggeStrichartz} where slightly weaker results were proven.

When $M = \Tn$ is a rectangular torus, there has been significant improvement over the restriction $s \geq \frac{1}{p}$. In this case, there is also interest in the problem when $d = 1$ and $\T{} = S^{1}$.  Perhaps the most notable work on this problem is that of Bourgain \cite{bourgain1993strichartzI} and Bourgain-Demeter \cite{BourgainDemeter2015} who showed that
\begin{equation} \label{strichartz torus}
    \lpnorm{e^{-it\laplac{\Tn}}f}{q}{\Tn \times [0,1]} \lesssim \norm{f}_{H^s(\Tn)}, \ s > \alpha(q)
\end{equation}
where 
\begin{equation}
    \alpha(q) = \\
    \begin{cases}

    & \ \ \ \ \ \ 0 \ \ \ \ \ \  ,   \ \text{if } 2 \leq q \leq \frac{2(n+1)}{n-1} \\ 
    
    &\frac{n-1}{2} - \frac{n+1}{q}, \ \text{if }  \frac{2(n+1)}{n-1} \leq q \leq \infty.

    \end{cases} 
\end{equation}
Bourgain originally showed this when $n = 2, 3$ on the standard square torus using number theoretic methods, while Bourgain-Demeter extended the result to higher dimensions and any rectangular torus as a consequence of their powerful decoupling theorem for the paraboloid. Killip-Visan later showed in \cite{KillipVisanStrichartz} that we can take $s \geq \alpha(q)$ when $q > \frac{2(n+1)}{n-1}$.

The exponent $\frac{2(n+1)}{n-1}$ is frequently called the critical exponent or the Stein-Tomas exponent, and from this point forward we denote it by $q_c = \frac{2(n+1)}{n-1}$. Note that $(q_c,q_c)$ is the only Keel-Tao admissible pair of exponents for which $p = q$. It is conjectured that the correct estimate for the square torus at $q_c$ is 
\begin{equation} \label{conjectured stricharts for torus}
    \lpnorm{e^{-it\laplac{\Tn}} f}{q_c}{\Tn \times [0, 1]} \lesssim \big( \log \la \big)^{\frac{1}{q_c}} \altlpnorm{f}{2}, \ \la \gg 1
\end{equation}
for any function $f \in L^2(\Tn)$ such that $\supp\widehat{f} \subset B(0,C\la)$ for some constant $C > 0$. Obtaining an estimate this strong when $n \geq 4$ is wide open. However, when $n = 2$ there has been significant progress and when $n = 3$ the problem has been resolved entirely. First, note that when $n = 2$, $q_c = 6$. Guth-Maldague-Wang showed in \cite{GuthMaldagueWangImprovedDecoupling} that the estimate in (\ref{conjectured stricharts for torus}) holds with the exponent $\frac{1}{6}$ on the logarithmic factor replaced by some large constant $C$. In \cite{GuoLiYungImprovedStrichartz}, Guo-Li-Yung adapted the methods of the previous work and improved the exponent on the logarithmic factor to $2 + \varepsilon$ for any $\varepsilon > 0$. When $n = 3$, $q_c = 4$ and in the remarkable paper \cite{HerrKwak2024}, Herr-Kwak showed that if $S \subset \Z{2}$ then
\begin{equation} \label{Herr-Kwak Strichartz}
    \lpnorm{e^{-it\laplac{\Tn}}\mathbbm{1}_S(D_x)f}{4}{\T{2} \times [0,1]} \lesssim \big(\log \#S\big)^{\frac{1}{4}}\altlpnorm{f}{2}.
\end{equation} 
Herr-Kwak achieved this by proving a stronger inequality on a short time window, namely
\begin{equation} \label{Herr-Kwak short time Strichartz}
    \lpnorm{e^{-it\laplac{\Tn}}\mathbbm{1}_S(D_x)f}{4}{\T{2} \times [0,\frac{1}{\log \#S}]} \lesssim \altlpnorm{f}{2}.
\end{equation}

Returning to the more general case of rectangular tori, the decoupling theorem of Bourgain-Demeter also yields an improvement on $\Tn$ over the work of Burq-Gerard-Tzvetkov at the endpoint exponent pair $\big(2, \frac{2d}{d-2}\big)$. In particular, using H\"{o}lder's inequality, Sobolev embedding and (\ref{strichartz torus}) at $q_c$ one can show that 
\begin{equation} \label{endpoint strichartz tori}
    \mixlpnorm{e^{-it\laplac{\Tn}}f}{2}{\frac{2d}{d-2}}{\Tn \times [0,1]} \lesssim \norm{f}_{H^s(\Tn)}, \ \ \text{for any} \ s > \frac{2}{d+2}.
\end{equation}
See \cite{BlairHuangSoggeStrichartz} for more details. It is expected that the restriction $s > \frac{2}{d+2} = \frac{2}{n+1}$ can be improved upon.

In this paper, we prove lossless Stricharts estimates on rectangular tori over short time intervals at $q_c$ and at the endpoint exponent pair $\big(2,\frac{2d}{d-2}\big)$. For convenience, we set  $q_e = \frac{2(n-1)}{n-3} = \frac{2d}{d-2}$. We first state our theorem for $q_c$. Let $\psi \in C_{0}^{\infty}(\mathbb{R})$ be an even function with $\supp \psi \subset B(0,C)$ for some constant $C > 0$. 
\begin{theorem}
    Let $\varepsilon > 0$. Then if $\delta \geq \la^{-\frac{1}{n+1} + \varepsilon}$ we have
    \begin{equation}
        \lpnorm{e^{-it\laplac{\Tn}}\psi(P/\la)f}{q_c}{\Tn \times [0, \frac{1}{\la \delta}]} \lesssim_{\varepsilon} \lpnorm{f}{2}{\Tn}.
    \end{equation}
\end{theorem}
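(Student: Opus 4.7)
The plan is to rescale so that the relevant Fourier extension operator has unit scale, apply Tao's bilinear restriction theorem via a Whitney decomposition in frequency, and then iterate over dyadic cap scales using the improved kernel bounds that the short time window provides.

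\textbf{Rescaling.} First I would pass to $\tilde x = \la x$, $\tilde t = \la^2 t$, which dilates the torus to $\la\Tn$ and the time interval to $[0,\la/\delta]$, while relocalizing the frequency to the unit cube $[-2,2]^{n-1}$. Since $(n+1)/q_c = (n-1)/2$, the Jacobian exactly cancels the $\altlpnorm{f}{2}$ factor, so the target becomes the lossless estimate
\[
    \mixlpnorm{\tilde u}{q_c}{q_c}{\la\Tn \times [0,\la/\delta]} \lesssim \altlpnorm{\tilde f}{2}
\]
for $\tilde u = e^{-i\tilde t \laplac{\tilde x}}\tilde f$ with $\tilde f$ frequency-localized to $[-2,2]^{n-1}$.

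\textbf{Bilinear decomposition.} The space-time Fourier support of $\tilde u$ lives in an $O(\delta/\la)$-neighborhood of the paraboloid $P^{n-1}$ (the fuzz in $\tau$ coming from the time localization to $[0,\la/\delta]$). The cap scale on the paraboloid that matches this vertical thickness is $\mu = (\delta/\la)^{1/2}$, i.e.\ frequency caps of side $(\la\delta)^{1/2}$ in the original variables. After a Whitney-type decomposition I would write $|\tilde u|^2 = \sum_{\tau_1,\tau_2}\tilde u_{\tau_1}\overline{\tilde u_{\tau_2}}$ and split into transverse pairs ($\mathrm{dist}(\tau_1,\tau_2)\gtrsim \mu$) and near-diagonal pairs. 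Tao's bilinear restriction for the paraboloid, combined with a Poisson-summation-style transfer to the torus and the Tao--Vargas--Vega $\varepsilon$-removal lemma, controls the transverse pairs with a constant that grows at most like $(\la/\delta)^{C\varepsilon'}$, and this is absorbable into $\lesssim_{\varepsilon} 1$ precisely under the hypothesis $\delta \geq \la^{-1/(n+1)+\varepsilon}$, since Tao's bilinear exponent $(n+3)/(n+1)$ is separated from the critical exponent $q_c = 2(n+1)/(n-1)$ by a gap that lines up with $1/(n+1)$.

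\textbf{Iteration with improved kernel estimates.} Near-diagonal pairs $\tau_1 \approx \tau_2$ are reduced to the same problem at a finer scale by rescaling each cap to unit size and reapplying the argument, which costs $O(\log(1/\delta))$ iterations before the residual is absorbed by a trivial $L^2$ bound. The key point here is that on the time window $[0, 1/(\la\delta)]$ the Schr\"odinger kernel restricted to a single cap looks like its Euclidean counterpart up to rapidly decaying tails---an improved kernel bound that relies on the product structure of the multiplier $\beta(\xi) = \prod_{i}\phi(\xi_i)$---so each iteration contributes only a harmless constant rather than a $\la^\varepsilon$ factor. In effect, the Poisson summation formula representing the torus kernel collapses to a controlled number of Euclidean copies whose contributions can be summed against the bilinear output without loss.

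\textbf{Main obstacle.} The hard part should be the lossless bookkeeping of the iteration: at each scale we must verify that the bilinear-to-linear passage and the lattice-to-Euclidean transfer together contribute no more than a sub-logarithmic factor, and that the $O(\log\la)$ iterations accumulate to an overall absorbable constant. The threshold $\delta \geq \la^{-1/(n+1)+\varepsilon}$ is essentially forced by this balance: $1/(n+1)$ arises from the gap between Tao's bilinear exponent and $q_c$, while the $\varepsilon$ provides the slack needed to absorb each iteration's losses. If $\delta$ were any smaller, the number of caps $\sim(\la/\delta)^{(n-1)/2}$ would exceed the threshold at which transverse bilinear restriction can close the argument on its own.
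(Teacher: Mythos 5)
Your high-level plan (rescale, use Tao's bilinear restriction, iterate over cap scales, exploit improved kernel bounds from the product structure of $\beta$) overlaps with the paper's strategy, but two central mechanisms are missing, and one of your steps would actually fail to close.

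\textbf{Missing height-splitting.} The paper does not apply bilinear restriction directly to $|\Sl f|^{q_c}$. Tao's bilinear theorem gives control at the exponent $q = \tfrac{2(n+2)}{n}$, which is strictly smaller than $q_c = \tfrac{2(n+1)}{n-1}$, so a bare Whitney/bilinear decomposition cannot by itself produce a lossless $L^{q_c}$ bound. The paper's workaround is a height decomposition: at each scale it splits space-time into a ``large'' set $A^{+}$ where $|\Sl f_{\tau_{\ell-1}}|$ exceeds a carefully chosen threshold, and a ``small'' set $A^{-}$. On $A^{+}$ a $TT^{*}$ duality argument with a truncated kernel and the improved kernel bound (Corollary 6.0.1) gives the clean $\la^{1/q_c}$ estimate. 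On $A^{-}$ one writes $|\Sl f|^{q_c} = |\Sl f|^{q_c-q}|\Sl f|^q$, pulls out the sup bound from the definition of $A^{-}$ on the first factor, and only then applies the broad--narrow decomposition and bilinear restriction to the $L^{q}$ integral. Without this splitting the bilinear input does not match the target exponent, and the threshold $\delta \geq \la^{-1/(n+1)+\varepsilon}$ emerges precisely from balancing $\la^{1/q_c}$ against the broad term $\la^{\frac{n-1}{n}\frac{1}{q_c}+\varepsilon}\delta^{-\frac{n+1}{n}\frac{1}{q_c}}$, not from the numerical gap you describe. (Your stated bilinear exponent $(n+3)/(n+1)$ is also not Tao's exponent $\tfrac{2(n+2)}{n}$.)

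\textbf{The iteration count is fatal as stated.} You propose $O(\log(1/\delta))$ iterations, ``each contributing only a harmless constant.'' But a fixed multiplicative constant $C>1$ per iteration compounds to $C^{\log(1/\delta)} = \delta^{-\log C}$, which is a polynomial loss, not an absorbable one. The paper explicitly avoids this: the number of scales is a \emph{fixed} integer $K$ chosen once in terms of $\varepsilon$ (so the cap side-length ratio is $\delta_K = \delta^{1/K}$), and the induction passes from scale $\ell-1$ to scale $\ell$ via Minkowski's inequality on the square function $\bigl(\sum_{\tau_\ell}|\Sl f_{\tau_\ell}|^2\bigr)^{1/2}$ together with $L^2$ orthogonality $\sum_{\tau_\ell}\|f_{\tau_\ell}\|_2^2 = \|f_{\tau_{\ell-1}}\|_2^2$. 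After exactly $K$ steps the resulting constant $C^K$ is harmless, and the residual $\delta_K^{-C}=\delta^{-C/K}$ is absorbed by choosing $K>C/\varepsilon$. Your near-diagonal iteration, as written, lacks both the square-function orthogonality mechanism and the bounded-$K$ device, so it would not yield a lossless estimate. The reference to the Tao--Vargas--Vega $\varepsilon$-removal lemma is also not needed here; the $\lesssim_\varepsilon$ allowance already absorbs the $R^\varepsilon$ from Tao's theorem directly.
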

\noindent
When $\delta > \la^{-\frac{1}{n+1}}$ then $\big(\la \delta\big)^{-1} < \la^{-\frac{n}{n+1}}$. Thus, Theorem 1.1 says that we have uniform lossless estimates on any time interval with length less than $\la^{-\frac{n}{n+1}}$. We point out that Theorem 1.1 does not recover the estimate (\ref{strichartz torus}) of Bourgain-Demeter. To recover this estimate, one would need to prove uniform lossless estimates on time intervals of length $\sim \la^{-\varepsilon}$ for any $\varepsilon > 0$. This equates to having $\delta \geq \la^{-1 + \varepsilon}$ in Theorem 1.1. We hope to work further towards this goal in future projects. However, we also note that $\la^{-\frac{n}{n+1}} = \frac{\la^{\frac{1}{n+1}}}{\la} \gg \frac{\log \la}{\la}$ so Theorem 1.1 is a significant improvement over the results of Huang-Sogge (\ref{Huang-Sogge small time Strichartz}) at $q_c$, which were proved for any compact manifold with nonpositive curvature. Our results are also an improvement of sorts on work of Schippa \cite{Schippa2023Strichartz}, who showed that for $2 \leq q < q_c$ we have
\begin{equation}
    \lpnorm{e^{-it\laplac{\Tn}}\psi(P / \la)f}{q}{\Tn \times [0, \la^{-\alpha}]} \lesssim \la^{-\kappa}\altlpnorm{f}{2}
\end{equation}
for any $\alpha > 0$ and $\kappa < \alpha\big(\frac{n-1}{2}(\frac{1}{2} - \frac{1}{p}) - \frac{1}{p}\big)$. However, the argument used for $2 \leq q < q_c$ fails to work at $q_c$ so our work compliments Schippa's result.

When $\Tn$ is the square torus our result at $q_c$ is only new for $n \geq 4$. When $n = 3$ our result is beaten by the result of Herr-Kwak (\ref{Herr-Kwak short time Strichartz}). When $n = 2$ our result is beaten by work of McConnell \cite{McConnell2025ShortTimeStrichartz}, who showed that you can prove a uniform lossless estimate on time intervals with length smaller than $\la^{-\frac{131}{208}}$. McConnell showed this by connecting the estimate over a short time interval to the problem of counting lattice points in a thin annular region of $\R{2}$. However, there is still room for improvement here, and we hope to explore this problem in future work as well.  Our result is new for general rectangular tori for all $n \geq 2$.

Next, we state our lossless Strichartz estimate on rectangular tori over a short time interval at the endpoint $(2,q_e)$.
\begin{theorem}
    Let $n \geq 4$ and $\varepsilon > 0$. Then if $\delta \geq  \la^{-\frac{n-3}{(n-1)(n+3)} + \varepsilon}$ we have
    \begin{equation}
        \mixlpnorm{e^{-it\laplac{\Tn}}\psi(P/\la)f}{2}{q_e}{\Tn \times [0,\frac{1}{\la \delta}]} \lesssim_{\varepsilon} \lpnorm{f}{2}{\Tn}
    \end{equation}
\end{theorem}
\noindent
This endpoint estimate is new for $n \geq 4$ (i.e. $d \geq 3$), but $n=4$ is also the smallest space-time dimension at which the endpoint is defined. Again, this does not improve on or recover the estimate (\ref{endpoint strichartz tori}). However, when $\delta > \la^{-\frac{n-3}{(n-1)(n+3)}}$ we have $(\la \delta)^{-1} < \frac{\la^{\frac{n-3}{(n-1)(n+3)}}}{\la}$ and $\frac{\la^{\frac{n-3}{(n-1)(n+3)}}}{\la}  \gg \frac{\log\la}{\la}$ so Theorem 1.2 is also a significant improvement over the result of Huang-Sogge (\ref{Huang-Sogge small time Strichartz}).

One interesting corollary of our result is a spectral projection estimate at $q_e = \frac{2d}{d-2}$. 
\begin{corollary}
    Let $n-1 = d \geq 3$ and let $\varepsilon > 0$. Then if $\delta \geq \la^{-\frac{d-2}{d(d+4)}+\varepsilon} = \la^{-\frac{n-3}{(n-1)(n+3)} + \varepsilon}$ we have
    \begin{equation} \label{main spectral projection estimate}
        \lpnorm{P_{\la, \delta}f}{q_e}{\T{d}} \lesssim_{\varepsilon}(\la\delta)^{\frac{1}{2}}\lpnorm{f}{2}{\T{d}}.
    \end{equation}
\end{corollary}
\noindent
The operator $P_{\la,\delta}$ projects onto the eigenspaces associated to eigenvalues of $P = \sqrt{-\laplac{\Tn}}$ that lie in the interval $[\la - \delta, \la + \delta]$. The fact that a lossless Strichartz estimate at $(2,q_e)$ on a short time interval implies a spectral projection estimate at $q_e$ is well-known. For example, a proof can be found in \cite{BlairHuangSoggeStrichartz}. However, we also include a short proof at the end of the paper for the sake of completeness. It is conjectured that (\ref{main spectral projection estimate}) holds for $\delta \geq \la^{-1+\varepsilon}$ for any $\varepsilon > 0$. The problem has been studied extensively, and one can consult the works of Hickman \cite{HickmanUniformResolvent}, Germain-Myerson \cite{Germain2022} and Germain-Myerson-Pezzi \cite{GermainMyersonPezzi} for further details. Our Corollary 1.2.1 is not an improvement over the known results established in the literature. However, it is interesting to note that if one could prove a lossless Strichartz estimate at $(2,q_e)$ on time intervals with length $\leq \la^{-\varepsilon}$ then this would imply the lower bound $\delta \geq \la^{-1 + \varepsilon}$ as result. 

Finally, let us say something about the inherent value of proving lossless Strichartz estimates on short time intervals. While the most obvious use of proving such estimates is to prove estimates with some loss in the spectral parameter over unit time intervals, these short time estimates also have some additional value. First, it is interesting to see just how long solutions to the Schr\"{o}dinger equation on a compact manifold display dispersive properties similar to the ones we see in Euclidean space. Studying how long such a time interval can be before we take some loss in the estimate is a very nice way of quantifying this property. These kinds of estimates are also more useful to proving well-posedness results for certain classes of non-linear Schr\"{o}inger equations than the corresponding estimates over unit length time intervals with loss. See \cite{HerrKwak2024}, \cite{McConnell2025ShortTimeStrichartz}, and \cite{Schippa2023Strichartz} for more details.

This paper is organized as follows. We first focus on proving Theorems 1.1 and 1.2 when $\Tn$ is the square torus, and most of our attention is devoted to the proof of Theorem 1.1. In Section 2, we introduce some notation and give a brief description of the overall scheme of the proof. Sections 3-7 contain the proof of Theorem 1.1. Section 8 then details the adjustments to the proof of Theorem 1.1 needed to prove Theorem 1.2 on the square torus. Finally, in Section 9, we explain the adjustments needed to extend our results to rectangular tori, before concluding with the short proof of Corollary 1.2.1.

\section*{Acknowledgments} 
I would like to thank my advisors, Christopher Sogge and Xiaoqi Huang, for their patient support and guidance throughout the entirety of this project. I would also like to thank Daniel Pezzi for many productive conversations regarding the problems addressed in this work. Finally, I would also like to thank Zhexing Zhang and Simone Masserini for their helpful feedback and reviews of earlier drafts of this paper.

\section{Notation, Initial Reductions and a Brief Description of the Proof}
From this point forward we let $\Tn$ denote the square torus $\Tn = \big(\R{} / 2\pi\Z{}\big)^{n-1}$. For any set $E \subset \Rn$ we let $\tilde{E} = E \cap \Zn$. Let $\la \in 2^{\N}$ with $\la \gg 1$, and $\delta \in 2^{-\N{}}$ with $\delta > \la^{-1}$. Also, let $Q_{\la} = [-\la, \la]^{n-1}$. We write $A \lesssim B$ to mean that there is a constant $C \geq 1$ depending only on the dimension such that $A \leq CB$. Similarly, $A \lesssim_{\varepsilon} B$ means there exists a constant $C_{\varepsilon}$ depending only on the dimension and $\varepsilon > 0$ such that $A \leq C_{\varepsilon}B$. We write $A \sim B$ if there are constants $C_1$ and $C_2$ such that $A \leq C_1 B$ and $B \leq C_2 A$. The value of such constants might change from line to line. This is unimportant, since these constants will not depend on the special parameters $\la$ and $\delta$. We write $e(t) = e^{it}$. For any function $f \in L^{2}(\Tn)$ and any $k \in \Zn$ we define 
\begin{equation}
    \widehat{f}(k) = \int_{\Tn}f(y)e(-y\cdot k)dy.
\end{equation}
Also, if $m$ is any function of $n-1$ variables then we define the Fourier multiplier associated to $m$ as
\begin{equation}
    m(D_x)f(x) = \sum_{k \in \Zn}\widehat{f}(k)m(k)e(x \cdot k).
\end{equation}
For a function $h \in \mathcal{S}(\R{m})$ we denote the Fourier transform of $h$
\begin{equation}
    \widehat{h}(\xi) = \int_{\R{m}}h(y)e(-y\cdot\xi)dy
\end{equation}
with inverse Fourier transform
\begin{equation}
    \check{h}(x) = \frac{1}{(2\pi)^m}\int_{\R{m}}h(\xi)e(x\cdot\xi)d\xi.
\end{equation}

Consider a function $f(x) = \sum_{k \in \tilde{Q}_{\la}}\widehat{f}(k)e(x\cdot k)$. Let $\phi \in C_{0}^{\infty}((-2,2))$ be an even function such that $\phi \equiv 1$ on $[-1,1]$. Set
\begin{equation}
    \beta(\xi) = \prod_{i = 1}^{n-1}\phi(\xi_i).
\end{equation}
Note that $\beta(\cdot / \la) \equiv 1$ on the cube $[-\la, \la]^{n-1}$ and $\supp{\beta(\cdot / \la)} \subset [-2\la, 2\la]^{n-1}$. Also, $\beta(D_x/\la)f = f$ since $\beta(\cdot / \la) \equiv 1$ on $Q_{\la}$. We will prove that for any $\varepsilon > 0$ if $\delta \geq \la^{-\frac{1}{n+1} +\varepsilon}$ then
\begin{equation}
    \lpnorm{e^{-it\laplac{\Tn}}\beta(D_x/\la)f}{q_c}{\Tn \times [0,\frac{1}{\la\delta}]} \lesssim_{\varepsilon}\lpnorm{f}{2}{\Tn}.
\end{equation}
Note that this estimate implies Theorem 1.1 in the case of the square torus via a straightforward argument. We also note that for any $\delta > \la^{-1}$ the estimate

\begin{equation} \label{dilated strichartz without cutoff}
    \lpnorm{e^{-i\la^{-1}t\laplac{\Tn}}\beta(D_x / \la) f}{q_c}{\Tn \times [0, \delta^{-1}]} \lesssim \la^{\frac{1}{q_c}}\altlpnorm{f}{2}
\end{equation}
is equivalent to the estimate
\begin{equation}
    \lpnorm{e^{-it\laplac{\Tn}}\beta(D_x / \la) f}{q_c}{\Tn \times [0, \frac{1}{\la \delta}]} \lesssim \altlpnorm{f}{2}
\end{equation}
by a simple rescaling argument.

Now, take a bump function $\eta \in C^{\infty}_{0}((-1,1))$ with $\eta \equiv 1$ on $[-\frac{1}{2}, \frac{1}{2}]$. We define the time-dilated Schr\"{o}dinger operator
\begin{equation} \label{time-dilated schordinger operator}
    \Sl f(x,t) = \eta(\delta t)e^{-i\la^{-1}t\laplac{\Tn}}\beta(D_x / \la)f(x).
\end{equation}
This operator takes functions of $n-1$ variables to functions of $n$ variables. Sometimes, we might want to think of this operator as one defined in terms of an additional parameter $t \in [0,\delta^{-1}]$ that maps functions of $n-1$ variables to functions of $n-1$ variables. When we wish to emphasize this perspective, we will write $S_{\la ,t}f(x) = \Sl f(x,t)$. It is straightforward to see that (\ref{dilated strichartz without cutoff}) follows from the following estimate for the time-dilated Schr\"{o}dinger operator

\begin{equation} \label{dilated strichartz with cutoff}
    \lpnorm{\Sl f}{q_c}{\Tn \times [0, \delta^{-1}]} \lesssim \la^{\frac{1}{q_c}}\altlpnorm{f}{2}.
\end{equation}
Note that for the functions $f$ described above we have

\begin{equation}
    \Sl f(x,t) = \eta(\delta t)\sum_{k \in \tilde{Q_{\la}}}\widehat{f}(k)\beta(k / \la)e(x \cdot k + \la^{-1}t|k|^2) = \eta(\delta t)\sum_{k \in \tilde{Q_{\la}}}\widehat{f}(k)e(x \cdot k + \la^{-1}t|k|^2)
\end{equation}
since $\beta(\cdot / \la) \equiv 1$ on $Q_{\la}$. We keep the cutoff $\beta(\cdot / \la)$ in the definition as it will be useful later in Section 6 when we prove estimates for the kernel of this operator.

The proof of (\ref{dilated strichartz with cutoff}) uses a multiscale argument that is similar in many ways to the induction on scales methods used prevalently in modern harmonic analysis. More precisely, the use of many scales is similar to the arguments in \cite{FuGuthMaldagueSuperlevelSetDecoupling} and \cite{GuthMaldagueWangImprovedDecoupling}. One key feature of our argument that sets it apart from the arguments in \cite{FuGuthMaldagueSuperlevelSetDecoupling}, \cite{GuthMaldagueWangImprovedDecoupling} and, more generally, typical induction on scales arguments, is that the number of scales we need to analyze depends only on $\varepsilon$, and crucially does not depend on the parameters $\la$ or $\delta$. This means that we can essentially focus on proving the analog of Theorem 1.1 for functions whose Fourier support has been localized to a small cube and ``add up" these estimates using the natural $L^2$ orthogonality of the many different localized pieces to obtain the desired result. Although this is an oversimplification, this is one of the key insights behind our argument. The other tools used are bilinear restriction estimates for the paraboloid and a ``height-splitting" at each scale. Height-splitting was also used by Blair-Huang-Sogge in \cite{BlairHuangSoggeImprovedSpectral}, \cite{BlairHuangSoggeStrichartz} and Huang-Sogge in \cite{HuangSoggeStrichartz}, \cite{huang2024curvature} at a single scale in order to prove the results in those papers. The height-splitting technique also bears some resemblance to the pruning process for wave packets used in \cite{FuGuthMaldagueSuperlevelSetDecoupling} and \cite{GuthMaldagueWangImprovedDecoupling}.

We start by defining a small scale $\dK{} = \delta^{\frac{1}{K}}$. For each $1 \leq \ell \leq K$ we partition the cube $Q_{\la}$ into cubes $\{\tau_{\ell}\}$ of side-length $\sim \la \dK{\ell}$. For each $\ell$, we localize the Fourier support of our function to one of the cubes in the partition $\{\tau_{\ell}\}$. We then prove an estimate (see Proposition 3.1) relating the analogous estimate to (\ref{dilated strichartz with cutoff}) for our localized function to the same kind of estimate but now with our function frequency localized to a cube $\tau_{\ell + 1}$ contained in $\tau_{\ell}$. Ultimately, when we reach the scale $\dK{K} = \delta$ and localize in frequency to cubes $\tau_{K}$ the amount of constructive interference we see is minimal and we obtain the best possible estimate on a time window of size $[0, \delta^{-1}]$: 
\begin{equation}\label{small scale Strichartz}
    \lpnorm{\Sl f_{\tau_{K}}}{q_c}{\Tn \times [0, \delta^{-1}]} \lesssim \la^{\frac{1}{q_c}}\altlpnorm{f_{\tau_{K}}}{2}.
\end{equation}
It is interesting to note that (\ref{small scale Strichartz}) holds for any $\delta \geq \la^{-1 + \kappa}$ for any $\kappa > 0$. The restriction on the size of $\delta$ comes from the estimates we obtain at scales where $1 \leq \ell < K$. We are then able to use the estimates at the smallest scale to prove estimates at larger scales, and ultimately to prove (\ref{dilated strichartz with cutoff}).

\section{Height Splitting at Scale $\dK{\ell - 1}$, for $1 \leq \ell < K$}

Fix $\ell$ with $1 \leq \ell < K$. Let $\tau_{\ell - 1}$ be a cube of side-length $2\la \dK{\ell - 1}$ with sides parallel to the coordinate axes contained in $Q_{\la}$. Let $f_{\tau_{\ell -1}}(x) = \sum_{k \in \tilde{\tau}_{\ell - 1}}\widehat{f}(k)e(x \cdot k)$ so that $\fsupp{f_{\tau_{\ell-1}}} \subset \tau_{\ell - 1}$. When $\ell = 1$ we take $\tau_{0} = Q_{\la}$ and $f_{\tau_{0}} = f$. Now, let $\{\tau_{\ell}\}$ be a collection of closed cubes of side-length $2\la \dK{\ell}$ with sides parallel to the coordinate axes whose interiors are pairwise disjoint and such that $\tau_{\ell - 1} = \cup \tau_{\ell}$. For any $\tau_{\ell}$ we write $f_{\tau_{\ell}}(x) = \sum_{k \in \tilde{\tau_{\ell}}}\widehat{f}(k)e(x \cdot k)$.
We will prove the following estimate:
\begin{proposition}
Let $\kappa > 0$. Then for any $\delta \geq \la^{-1 + \kappa}$ we have
\begin{equation}
\begin{split}
    \lpnorm{\Sl f_{\tau_{\ell-1}}}{q_c}{\Tn\times[0, \delta^{-1}]} \leq \big(C_1 \la^{\frac{1}{q_c}} + C_{\varepsilon}\la^{\frac{n-1}{n}\frac{1}{q_c} + \varepsilon}\dK{-C}\delta^{-\frac{n+1}{n}\frac{1}{q_c}}\big)\altlpnorm{f_{\tau_{\ell-1}}}{2} \\
    + C_2\Big(\sum_{\tau_{\ell} \subset \tau_{\ell - 1}}\lpnorm{\Sl f_{\tau_{\ell}}}{q_c}{\Tn \times [0, \delta^{-1}]}^{2}\Big)^{\frac{1}{2}}
\end{split}
\end{equation}
for any $\varepsilon > 0$.
\end{proposition}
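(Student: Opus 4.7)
My plan is to prove Proposition 3.1 via a broad/narrow decomposition at scale $\dK{\ell-1}$, handling the narrow regime by an $L^2$ square-function bound and the broad regime by Tao's bilinear restriction theorem for the paraboloid. Writing $\Sl f_{\tau_{\ell-1}} = \sum_{\tau_\ell\subset\tau_{\ell-1}} \Sl f_{\tau_\ell}$ and fixing a parameter $K_0$ that will be absorbed into the $\dK{-C}$ factor, I declare $(x,t)\in\Tn\times[0,\delta^{-1}]$ \emph{narrow} if a single subcube $\tau_\ell^*$ and its $O(1)$ nearest neighbors account for $\Sl f_{\tau_{\ell-1}}(x,t)$, and \emph{broad} otherwise. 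By the standard Bourgain--Guth dichotomy, on broad points there exist two subcubes $\tau_\ell^1, \tau_\ell^2$ separated by $\gtrsim \la\dK{\ell-1}/K_0$ with the pointwise bilinear bound $|\Sl f_{\tau_{\ell-1}}(x,t)|^2 \lesssim K_0^{O(1)}|\Sl f_{\tau_\ell^1}(x,t)||\Sl f_{\tau_\ell^2}(x,t)|$.

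On the narrow set, only $O(1)$ subcubes contribute at each point, so Cauchy--Schwarz gives $|\Sl f_{\tau_{\ell-1}}|^2 \lesssim \sum_{\tau_\ell}|\Sl f_{\tau_\ell}|^2$, and Minkowski's inequality (valid since $q_c\geq 2$) then produces the $\ell^2$-square-function bound that is the $C_2$ term in (3.1). For the broad set, I apply Tao's bilinear restriction theorem to each transverse pair: parabolically rescaling so the caps $\tilde{\tau}_\ell^1,\tilde{\tau}_\ell^2$ become size $\sim\dK{}$ with separation $\sim 1$ and rescaling the space-time domain accordingly, Tao's theorem at the supercritical exponent $q_c/2>(n+2)/n$ yields
\[
\bigl\|\Sl f_{\tau_\ell^1}\cdot \Sl f_{\tau_\ell^2}\bigr\|_{L^{q_c/2}(\Tn\times[0,\delta^{-1}])} \lesssim C_\varepsilon\la^{\varepsilon}\dK{-C}(\text{scale factors})\|f_{\tau_\ell^1}\|_2\|f_{\tau_\ell^2}\|_2.
\]
Summing over the $O(\dK{-C})$ transverse pairs (using H\"older to pass from a bilinear $\ell^{q_c/2}$-sum to an $\ell^2$-sum, and invoking the $L^2$-orthogonality identity $\sum\|f_{\tau_\ell}\|_2^2=\|f_{\tau_{\ell-1}}\|_2^2$) while tracking the Jacobians of the rescaling produces the bilinear contribution $C_\varepsilon\la^{(n-1)/(nq_c)+\varepsilon}\dK{-C}\delta^{-(n+1)/(nq_c)}\|f_{\tau_{\ell-1}}\|_2$ to the first term of (3.1). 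The companion $C_1\la^{1/q_c}$ term comes from a fallback that is used whenever the bilinear bound is weaker than this ``trivial'' price: from the broad pointwise bound plus Cauchy--Schwarz one gets $|\Sl f_{\tau_{\ell-1}}|^2\lesssim K_0^{O(1)}\sum|\Sl f_{\tau_\ell}|^2$, and the per-cube estimate $\|\Sl f_{\tau_\ell}\|_{q_c}\lesssim\la^{1/q_c}\|f_{\tau_\ell}\|_2$---obtained by interpolating Plancherel against the trivial $L^\infty$ Cauchy--Schwarz bound $\|\Sl f_{\tau_\ell}\|_\infty\lesssim\la^{(n-1)/2}\|f_{\tau_\ell}\|_2$---combined with $L^2$-orthogonality gives the $\la^{1/q_c}\|f_{\tau_{\ell-1}}\|_2$ bound.

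The main obstacle will be the bookkeeping of the rescaling factors in the broad bilinear step. The cap-to-cap separation after rescaling to the unit-paraboloid setup is not unit but $\dK{}$, the space-time domain becomes $2\pi$-periodic in $x$ on a scale dictated by $\la$ and of time length depending on $\delta^{-1}$, and Tao's theorem is phrased for the continuous paraboloid extension on a ball in $\R[n]$. Converting the discrete Fourier-series operator $\Sl f_{\tau_\ell^i}$ into a continuous extension---by Poisson summation, by lifting to a fundamental domain of the universal cover, or via a transfer principle to a lattice version of Tao's bilinear inequality---while preserving the sharp exponents $\la^{(n-1)/(nq_c)+\varepsilon}$ and $\delta^{-(n+1)/(nq_c)}$ requires delicate care. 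A related subtlety is confirming that the effective space-time radius $R$ on which Tao's inequality is applied satisfies $R^\varepsilon\lesssim\la^\varepsilon$ (or at worst $(\la\delta^{-1})^\varepsilon$), so that the mild $R^\varepsilon$-loss built into the bilinear theorem remains absorbable into $C_\varepsilon\la^\varepsilon$.
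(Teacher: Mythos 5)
Your proposal drops the paper's central device, the height splitting of $\Tn\times[0,\delta^{-1}]$ into the regions $A^{\pm}_{\tau_{\ell-1}}$ based on the threshold $C_0\big(\la\delta^{-1}\dK{2(\ell-1)}\big)^{(n-1)/4}$, and this is not a cosmetic simplification --- without it neither term on the right of the proposition can actually be reached.

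For the $C_1\la^{1/q_c}$ term, you propose interpolating the Plancherel bound $\lpnorm{\Sl f_{\tau_\ell}}{2}{\Tn\times[0,\delta^{-1}]}\lesssim\delta^{-1/2}\altlpnorm{f_{\tau_\ell}}{2}$ against the trivial $L^\infty$ Cauchy--Schwarz bound. That interpolation gives $\delta^{-1/q_c}(\la\dK{\ell})^{\frac{n-1}{n+1}}\altlpnorm{f_{\tau_\ell}}{2}$, which is strictly larger than $\la^{1/q_c}=\la^{\frac{n-1}{2(n+1)}}$ whenever $\la\delta>1$, so it never produces the claimed constant. The sharp per-cube constant $\la^{1/q_c}$ requires the $TT^*$/fractional-integration argument using the dispersive kernel bound $\la^{(n-1)/2}|t-s|^{-(n-1)/2}$, and Corollary 6.0.1 shows that this bound is available uniformly for $|t-s|\leq\delta^{-1}$ only when the cube has side $\sim\la\delta$, i.e.\ at scale $\ell=K$; for $\ell<K$ the kernel carries the additional factor $(1+\dK{\ell}|t-s|)^{n-1}$, which is $\gg1$ near $|t-s|\sim\delta^{-1}$. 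In the paper the $\la^{1/q_c}$ term arises on $A^{+}_{\tau_{\ell-1}}$ by splitting the $TT^*$ kernel into its $|t-s|\leq1$ part (handled by Burq--Gerard--Tzvetkov) and its tail, and absorbing the tail into the left-hand side using that $|\Sl f_{\tau_{\ell-1}}|$ exceeds the threshold on $A^{+}$. This is Proposition 3.2, and nothing in your outline replaces it.

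For the second term, applying Tao's bilinear estimate directly at exponent $q_c$ on the transverse pairs gives, after tracking the rescaling, $\la^{1/q_c+\varepsilon}\delta^{-1/q_c}\dK{-C}\altlpnorm{f_{\tau_{\ell-1}}}{2}$: the $\dK{}$-gain $\dK{\ell(\frac{n-1}{2}-\frac{n+1}{q})}$ vanishes exactly at $q=q_c$, and $\la^{-n/q_c}\cdot\la^{(n-1)/2}=\la^{1/q_c}$. Since $(\la\delta)^{1/(nq_c)}>1$, this is strictly worse than the target $\la^{\frac{n-1}{n}\frac{1}{q_c}}\delta^{-\frac{n+1}{n}\frac{1}{q_c}}$. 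The paper instead uses the bilinear estimate at the sharp Tao exponent $q=\frac{2(n+2)}{n}<q_c$ (Lemma 5.2), where there is a genuine $\dK{}$-gain, and bridges the resulting $L^{q_c}$-versus-$L^q$ gap by bounding the leftover factor $|\Sl f_{\tau_{\ell-1}}|^{q_c-q}$ by the threshold on $A^{-}_{\tau_{\ell-1}}$; the exponent arithmetic in (5.6)--(5.10) works out precisely because of this threshold substitution. Absent the height split, this cancellation is unavailable. Your narrow/square-function step and the discrete-to-continuous transfer concerns are fine, but the proposal as written cannot close.
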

\noindent Note that $C, C_1, C_2$ and $C_{\varepsilon}$ are absolute constants which all depend on the dimension $n$ with $C_{\varepsilon}$ also depending on $\varepsilon$. Crucially, none of these constants depend on either of the parameters $\la$ or $\delta$. It suffices to prove Proposition 3.1 when $\altlpnorm{f_{\tau_{\ell - 1}}}{2} = 1$ so we will assume that this is the case until the end of Section 5.

We define the following two subsets of $\Tn \times [0,\delta^{-1}]$:
\begin{equation} \label{height decomposition definitions}
\begin{split}
    A^{+}_{\tau_{\ell - 1}} = \{(x,t) \in \Tn \times [0,\delta^{-1}] \ : |\Sl f_{\tau_{\ell - 1}}(x,t)| \geq C_0\big(\la \delta^{-1}\dK{2(\ell - 1)}\big)^{\frac{n-1}{4}} \} \\
    A^{-}_{\tau_{\ell - 1}} = \{(x,t) \in \Tn \times [0,\delta^{-1}] \ : |\Sl f_{\tau_{\ell - 1}}(x,t)| < C_0\big(\la \delta^{-1}\dK{2(\ell - 1)}\big)^{\frac{n-1}{4}} \}
\end{split}
\end{equation}

\noindent
where the precise value of $C_0$ will be chosen later in the argument. Then Proposition 3.1 will follow from estimates for $\Sl f_{\tau_{\ell - 1}}$ over each of the regions $A^{+}_{\tau_{\ell - 1}}$ and $A^{-}_{\tau_{\ell - 1}}$, which we describe in the following two propositions.

\begin{proposition}
Let $\kappa  > 0$. Then for any $\delta \geq \la^{-1 + \kappa}$ we have
\begin{equation}
    \lpnorm{\Sl f_{\tau_{\ell - 1}}}{q_c}{A^{+}_{\tau_{\ell - 1}}} \lesssim \la^{\frac{1}{q_c}}\altlpnorm{f_{\tau_{\ell - 1}}}{2}.
\end{equation}
\end{proposition}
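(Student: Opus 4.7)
The strategy is to combine Tao's bilinear restriction theorem for the paraboloid on a sub-cube decomposition of $\tau_{\ell-1}$ with a super-level-set argument driven by the pointwise lower bound $|\Sl f_{\tau_{\ell-1}}| \geq C_0(\la\delta^{-1}\dK{2(\ell-1)})^{(n-1)/4}$ on $A^+_{\tau_{\ell-1}}$. Bilinear restriction only yields an $L^{p_0}$ bound for some $p_0<q_c$, but the height threshold lets us interpolate this up to $L^{q_c}$ on $A^+_{\tau_{\ell-1}}$ at the expense of a factor of the trivial $L^\infty$ bound on $\Sl f_{\tau_{\ell-1}}$.

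Concretely, I would decompose $f_{\tau_{\ell-1}} = \sum_\tau f_\tau$ over a partition of $\tau_{\ell-1}$ into sub-cubes of some intermediate scale $r$ and expand
$$|\Sl f_{\tau_{\ell-1}}|^2 = \sum_\tau |\Sl f_\tau|^2 + \sum_{\tau\neq\tau'}\Sl f_\tau\,\overline{\Sl f_{\tau'}}.$$
Cauchy-Schwarz on the Fourier series bounds the diagonal sum pointwise by $r^{n-1}\|f_{\tau_{\ell-1}}\|_2^2$, so choosing $r$ (and correspondingly $C_0$) so that $r^{n-1}\|f_{\tau_{\ell-1}}\|_2^2<\tfrac{1}{2}h^2$ forces the off-diagonal (transverse) sum to dominate $|\Sl f_{\tau_{\ell-1}}|^2$ pointwise on $A^+_{\tau_{\ell-1}}$. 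This is consistent with $r\sim(\la\delta^{-1}\dK{2(\ell-1)})^{1/2}$, which is exactly the length scale built into the form of $h$.

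For the transverse off-diagonal sum I would apply Tao's bilinear theorem. After a parabolic rescaling sending $\tau_{\ell-1}$ to a unit parabolic cap (so $\Tn$ becomes a box of side $R:=\la\dK{\ell-1}$, the time window becomes $[0,R^2\la^{-1}\delta^{-1}]$, and $\tilde{\tau}_{\ell-1}$ becomes a fine lattice inside the unit cap), together with a Poisson-summation transference to $\R[n]$, each transverse bilinear pair inherits Tao's sharp $L^{p_0/2}$ bound for any $p_0/2>(n+2)/n$. A standard Whitney-style sum over transverse pairs, combined with the orthogonality identity $\sum_\tau\|f_\tau\|_2^2=\|f_{\tau_{\ell-1}}\|_2^2$, produces $\|\Sl f_{\tau_{\ell-1}}\|_{L^{p_0}(\Tn\times[0,\delta^{-1}])}\lesssim R^{\beta(p_0)}\|f_{\tau_{\ell-1}}\|_2$ with $\beta(p_0)$ a computable exponent from undoing the rescaling. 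Since $p_0<q_c$, a dyadic super-level-set decomposition of $A^+_{\tau_{\ell-1}}$ combined with the trivial $L^\infty$ bound $\|\Sl f_{\tau_{\ell-1}}\|_\infty\lesssim R^{(n-1)/2}$ gives
$$\|\Sl f_{\tau_{\ell-1}}\|_{L^{q_c}(A^+_{\tau_{\ell-1}})}^{q_c}\lesssim R^{(n-1)(q_c-p_0)/2}\|\Sl f_{\tau_{\ell-1}}\|_{L^{p_0}}^{p_0},$$
and inserting the bilinear bound collapses the right-hand side to $\la\|f_{\tau_{\ell-1}}\|_2^{q_c}$ provided $p_0$ is taken just above $2(n+2)/n$.

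The main obstacle is this exponent arithmetic: the rescaling exponent $\beta(p_0)$ from Tao's theorem must pair exactly against the $L^\infty$ exponent arising in the layer cake so that the total power of $R=\la\dK{\ell-1}$ reduces to $\la$ uniformly in $1\leq\ell<K$, and this matching is precisely what dictates the form $h=C_0(\la\delta^{-1}\dK{2(\ell-1)})^{(n-1)/4}$ of the height threshold. A secondary delicate point is the torus-to-Euclidean transference for Tao's theorem after the rescaling: the wave packets of $\Sl$ become spatially localized at scale $(\la\dK{\ell-1})^{-1}$, and one must ensure they do not wrap around the rescaled torus over the time window $R^2\la^{-1}\delta^{-1}$, which is where the hypothesis $\delta\geq\la^{-1/(n+1)+\varepsilon}$ and the total number of scales $K$ enter the argument.
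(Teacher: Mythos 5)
Your approach is genuinely different from the paper's, and unfortunately it has a fundamental misdirection: you have reached for the tool (Tao's bilinear restriction) that the paper uses for the complementary \emph{small-height} Proposition 3.3, not for the large-height Proposition 3.2.

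The paper's proof of Proposition 3.2 is a $TT^*$ duality argument in the style of Blair--Huang--Sogge and Huang--Sogge. One picks a dual function $g$ with $\|g\|_{q_c'}=1$, writes $\|\Sl f_{\tau_{\ell-1}}\|_{L^{q_c}(A^+)}^2$ via Cauchy--Schwarz as a pairing of $\mathbbm{1}_{A^+}g$ against the kernel of $\Sl\chi_{\tau_{\ell-1}}(D_x)\bigl(\Sl\chi_{\tau_{\ell-1}}(D_x)\bigr)^*$, and splits that kernel into a near-diagonal piece ($|t-s|\le1$) and a far piece. The near-diagonal piece is controlled by the classical Burq--Gerard--Tzvetkov Strichartz estimate, yielding $O(\la^{2/q_c})$ directly with no $\la^\varepsilon$ loss. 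The far piece is estimated in $L^1\to L^\infty$ via the stationary-phase/Poisson-summation kernel bound of Corollary 6.0.1 (which is where the frequency cutoff $\chi_{\tau_{\ell-1}}$ gives a gain of $\dK{(n-1)(\ell-1)}$), and then—this is the whole point of the height threshold—a Chebyshev estimate on $|A^+|$ shows the far contribution is at most $\tfrac12\|\Sl f_{\tau_{\ell-1}}\|_{L^{q_c}(A^+)}^2$ once $C_0$ is chosen large, so it can be absorbed into the left-hand side.

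The concrete gaps in your proposal are: (i) the height lower bound on $A^+$ cannot power an $L^\infty$–$L^{p_0}$ interpolation the way you describe, because $\|\Sl f_{\tau_{\ell-1}}\|_\infty\lesssim(\la\dK{\ell-1})^{(n-1)/2}$ is an unconditional bound; your chain $\|\Sl f_{\tau_{\ell-1}}\|_{L^{q_c}(A^+)}^{q_c}\lesssim\|\Sl f_{\tau_{\ell-1}}\|_\infty^{q_c-p_0}\|\Sl f_{\tau_{\ell-1}}\|_{L^{p_0}}^{p_0}$ makes no use of $A^+$ and, if its exponent arithmetic closed, would prove a lossless estimate on the full window $\Tn\times[0,\delta^{-1}]$ in one step, which is stronger than Theorem 1 itself. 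The observation that the off-diagonal dominates on $A^+$ is a nice idea, but you do not actually use it in the subsequent bound. (ii) The exponent arithmetic cannot collapse to $\la^{1/q_c}$: after rescaling the ambient parameter is $R=\la\dK{\ell-1}$, and powers of $R$ do not reduce to powers of $\la$ alone without some other mechanism. (iii) Tao's theorem carries an $R^\varepsilon$ loss, whereas the paper's Proposition 3.2 is stated with an absolute constant independent of $\varepsilon$ and $K$; that matters because the $\varepsilon$-loss in Proposition 3.1 must enter only through the broad term, not the $\la^{1/q_c}$ term, so the iteration converges. (iv) You attribute the restrictions $\delta\geq\la^{-1/(n+1)+\varepsilon}$ and the role of $K$ to this proposition, but Proposition 3.2 holds for all $\delta>\la^{-1}$; those restrictions only arise when combining Propositions 3.2 and 3.3 in Section 7.
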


\begin{proposition}
Let $\kappa > 0$. Then for any $\delta \geq \la^{-1 + \kappa}$ we have
    \begin{equation}
        \lpnorm{\Sl f_{\tau_{\ell - 1}}}{q_c}{A^{-}_{\tau_{\ell  -1}}} \leq C\Big(\sum_{\tau_{\ell} \subset \tau_{\ell - 1}}\lpnorm{\Sl f_{\tau_{\ell}}}{q_c}{\Tn \times [0, \delta^{-1}]}^{2}\Big)^{\frac{1}{2}} + C_{\varepsilon}\la^{\frac{n-1}{n}\frac{1}{q_c}  + \varepsilon}\dK{-C}\delta^{-\frac{n+1}{n}\frac{1}{q_c}}\altlpnorm{f_{\tau_{\ell - 1}}}{2}
    \end{equation}
for any $\varepsilon > 0$.
\end{proposition}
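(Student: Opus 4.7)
My plan is to combine a bilinear decomposition of $|\Sl f_{\tau_{\ell-1}}|^2$ with H\"{o}lder's inequality and Tao's bilinear restriction theorem for the paraboloid. I start from the identity
\[
\int_{A^{-}_{\tau_{\ell-1}}}|\Sl f_{\tau_{\ell-1}}|^{q_c}=\int_{A^{-}_{\tau_{\ell-1}}}|\Sl f_{\tau_{\ell-1}}|^{q_c-2}\cdot|\Sl f_{\tau_{\ell-1}}|^2
\]
and expand $|\Sl f_{\tau_{\ell-1}}|^2=\sum_{\tau_\ell,\tau'_\ell\subset\tau_{\ell-1}}\Sl f_{\tau_\ell}\,\overline{\Sl f_{\tau'_\ell}}$, splitting the pairs into \emph{close} ones (distance $\lesssim\la\dK{\ell}$, with $O(1)$ neighbors per cube) and \emph{transverse} ones (distance $\gtrsim\la\dK{\ell}$).

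The close-pair sum is bounded pointwise by $C\sum_{\tau_\ell}|\Sl f_{\tau_\ell}|^2$, and H\"{o}lder with conjugate exponents $\tfrac{q_c}{q_c-2}$ and $\tfrac{q_c}{2}$ produces a contribution
\[
C\lpnorm{\Sl f_{\tau_{\ell-1}}}{q_c}{A^{-}_{\tau_{\ell-1}}}^{q_c-2}\sum_{\tau_\ell\subset\tau_{\ell-1}}\lpnorm{\Sl f_{\tau_\ell}}{q_c}{\Tn\times[0,\delta^{-1}]}^2,
\]
which is the source of the square-function term in the proposition. A second application of H\"{o}lder reduces the transverse part to estimating
\[
B:=\Big\lVert\sum_{\tau_\ell\not\sim\tau'_\ell}\Sl f_{\tau_\ell}\,\overline{\Sl f_{\tau'_\ell}}\Big\rVert_{L^{q_c/2}(\Tn\times[0,\delta^{-1}])}.
\]
I would bound $B$ pair by pair via Tao's bilinear restriction: a Galilean modulation and parabolic rescaling normalize each cube of side $\la\dK{\ell}$ to a unit cap on the standard paraboloid, transverse pairs then have separation $\gtrsim 1$, and Tao's theorem applies at $q_c/2=(n+1)/(n-1)>(n+2)/n$. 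Transferring back to $\Tn\times[0,\delta^{-1}]$ via a smooth spatial cutoff and summing with the $L^2$-orthogonality $\sum_{\tau_\ell}\|f_{\tau_\ell}\|_2^2=\|f_{\tau_{\ell-1}}\|_2^2$, together with Cauchy-Schwarz over the $\sim\dK{-(n-1)}$ many cubes, should give $B\lesssim_\varepsilon\la^{2(n-1)/(nq_c)+2\varepsilon}\dK{-2C}\delta^{-2(n+1)/(nq_c)}\lVert f_{\tau_{\ell-1}}\rVert_2^2$.

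Collecting the two pieces yields
\[
\lpnorm{\Sl f_{\tau_{\ell-1}}}{q_c}{A^{-}_{\tau_{\ell-1}}}^{q_c}\leq C\lpnorm{\Sl f_{\tau_{\ell-1}}}{q_c}{A^{-}_{\tau_{\ell-1}}}^{q_c-2}\bigg(\sum_{\tau_\ell\subset\tau_{\ell-1}}\lpnorm{\Sl f_{\tau_\ell}}{q_c}{\Tn\times[0,\delta^{-1}]}^2+B\bigg).
\]
Dividing by the $(q_c-2)$-power factor on the left-hand side (trivially when it vanishes), taking a square root, and using $\sqrt{u+v}\leq\sqrt{u}+\sqrt{v}$, I recover the statement since $\sqrt{B}$ matches the claimed error $C_\varepsilon\la^{(n-1)/n\cdot 1/q_c+\varepsilon}\dK{-C}\delta^{-(n+1)/n\cdot 1/q_c}\lVert f_{\tau_{\ell-1}}\rVert_2$. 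The hard part will be the quantitative bilinear restriction step: Tao's theorem is Euclidean and nonperiodic, so the transference to $\Tn\times[0,\delta^{-1}]$ with the dispersion $\la^{-1}|k|^2$ must track the parabolic rescaling taking cubes of side $\la\dK{\ell}$ to unit caps, the effect of this rescaling on the space-time output domain, the $\la^\varepsilon$ loss arising in the transference, and the Cauchy-Schwarz sum over transverse pairs, in such a way that only the polynomial $\dK{-C}$ loss (and not a power of $\la$ or $\delta$) is produced.
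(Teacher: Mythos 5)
Your proposal skips the one ingredient that makes the proposition true: nowhere do you use the defining pointwise bound on $A^{-}_{\tau_{\ell-1}}$, namely $|\Sl f_{\tau_{\ell - 1}}| < C_0(\la\delta^{-1}\dK{2(\ell-1)})^{\frac{n-1}{4}}$ there. You restrict the integral to $A^{-}_{\tau_{\ell-1}}$ but treat it as an arbitrary region; if your plan went through it would prove the same bound with $A^{-}_{\tau_{\ell-1}}$ replaced by all of $\Tn\times[0,\delta^{-1}]$, which cannot be right and indicates something is being lost. What is lost is exactly the numerology of the transverse term. The paper's route is different and specifically designed to exploit the height bound: Lemma 5.1 gives a \emph{pointwise} broad--narrow bound on $|\Sl f_{\tau_{\ell-1}}|$, which is then raised to the Tao endpoint power $q=\frac{2(n+2)}{n}$, and in the transverse term the factor $|\Sl f_{\tau_{\ell-1}}|^{q_c-q}$ is bounded by the $L^\infty$ bound on $A^{-}_{\tau_{\ell-1}}$ rather than by $\|\Sl f_{\tau_{\ell-1}}\|_{L^{q_c}(A^{-}_{\tau_{\ell-1}})}^{q_c-q}$. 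That $L^\infty$ bound, raised to the power $q_c-q$, precisely cancels the unfavorable $\dK{\ell}$ scaling in the parabolically rescaled bilinear restriction constant, leaving only an $\ell$-independent $\dK{-C}$ loss. Your scheme of writing $|\cdot|^{q_c-2}\cdot|\cdot|^2$ and H\"oldering with $(\frac{q_c}{q_c-2},\frac{q_c}{2})$ forces the bilinear step to take place at $L^{q_c/2}$ (equivalently at $L^{q_c}$ for the half-power), and then the $|\Sl f_{\tau_{\ell-1}}|^{q_c-2}$ factor is simply divided out, so the height bound never enters.

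Concretely, applying Lemma 5.2 at the exponent you propose (half-power in $L^{q_c}$, noting $\frac{n-1}{2}-\frac{n+1}{q_c}=0$ so the $\dK{\ell}$-exponent vanishes) gives, for each transverse pair,
\begin{equation*}
\big\|\Sl f_{\tau_{\ell}}\Sl f_{\tau_{\ell}'}\big\|_{L^{q_c/2}}\lesssim_\varepsilon \la^{\frac{n-1}{n+1}+2\varepsilon}\,\delta^{-\frac{n-1}{n+1}}\,\|f_{\tau_{\ell}}\|_2\|f_{\tau_{\ell}'}\|_2,
\end{equation*}
and after summing over the $\lesssim\dK{-(n-1)}$ cubes via Cauchy--Schwarz one lands at
\begin{equation*}
B\lesssim_\varepsilon \la^{\frac{n-1}{n+1}+2\varepsilon}\,\delta^{-\frac{n-1}{n+1}}\,\dK{-(n-1)}\,\|f_{\tau_{\ell-1}}\|_2^2.
\end{equation*}
The bound you need is $B\lesssim_\varepsilon\la^{\frac{(n-1)^2}{n(n+1)}+2\varepsilon}\delta^{-\frac{n-1}{n}}\dK{-2C}\|f_{\tau_{\ell-1}}\|_2^2$, and the ratio of what you get to what you need is $(\la\delta)^{\frac{n-1}{n(n+1)}}$. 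Since $\la\delta>1$ (and in fact $\la\delta\gtrsim\la^{\frac{n}{n+1}}$ in the relevant regime), this is a genuine polynomial loss in $\la$, not a $\dK{-C}$ loss; tracing it through Section 7 one would need $\delta\gtrsim\la^{c\varepsilon}$ for a positive constant $c$, which is vacuous. To repair this within your expansion framework you would have to bring in the $A^{-}_{\tau_{\ell-1}}$ bound on the $|\Sl f_{\tau_{\ell-1}}|^{q_c-2}$ factor rather than dividing it out, but then the transverse part degenerates to an $L^1$ estimate and loses more; the paper's choice of exponent $q=\frac{2(n+2)}{n}$ via the broad--narrow pointwise bound is what makes both pieces fit.
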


\section{Large Height Estimates at Scale $\dK{\ell-1}$, for $1 \leq \ell < K$}

We start by proving Proposition 3.2. To do this, we adapt an argument used in recent works of Blair-Huang-Sogge \cite{BlairHuangSoggeImprovedSpectral}, \cite{BlairHuangSoggeStrichartz} and Huang-Sogge \cite{HuangSoggeStrichartz}, \cite{huang2024curvature}. First, note that if the center of $\tau_{\ell - 1}$ is $c = (c_1, ..., c_{n-1})$ then $\tau_{\ell - 1} = \prod_{i=1}^{n-1}[c_{i} - \la\dK{\ell - 1}, c_{i} + \la\dK{\ell - 1}]$. Let $\chi \in C_{0}^{\infty}((-2,2))$ with $\chi \equiv 1$ on $[-1,1]$. For $\xi = (\xi_1,...,\xi_{n-1})$ set $\chi_{\tau_{\ell-1}}(\xi) =\prod_{i = 1}^{d} \chi((\la \dK{\ell - 1})^{-1}(\xi_i - c_{i}))$. Then $\supp\chi_{\tau_{\ell - 1}} \subset 2\tau_{\ell - 1}$ and $\chi_{\tau_{\ell - 1}} \equiv 1$ on $\tau_{\ell - 1}$. Thus, if $\chi_{\tau_{\ell - 1}}(D_x)$ denotes the Fourier multiplier given by
\begin{equation}
    \chi_{\tau_{\ell - 1}}(D_x)h(x) = \sum_{k \in \Zn}\widehat{h}(k)\chi_{\tau_{\ell}}(k)e(x \cdot k), \ x \in \Tn
\end{equation}
then it follows that $\Sl f_{\tau_{\ell - 1}} = \Sl \chi_{\tau_{\ell - 1}}(D_x) f_{\tau_{\ell - 1}}$. 

Now, choose a function $g$ such that
\begin{equation} \label{large height duality function}
\begin{split}
        \lpnorm{g}{q_c'}{\Tn \times [0, \delta^{-1}]} &= 1 \ \text{and} \\
        \lpnorm{\Sl \chi_{\tau_{\ell - 1}}(D_x)f_{\tau_{\ell - 1}}}{q_c}{A^{+}_{\tau_{\ell - 1}}} &= \Big|\iint \Sl \chi_{\tau_{\ell - 1}}(D_x) f_{\tau_{\ell - 1}}(x,t)\mathbbm{1}_{A^{+}_{\tau_{\ell - 1}}}(x,t)\overline{ g(x,t)} dx dt\Big|.
\end{split}
\end{equation}
As we are assuming that $\lpn{f_{\tau_{\ell - 1}}}{2} = 1$, it follows from the Cauchy-Schwarz inequality that
\begin{equation}
\begin{split}
\lpnorm{\Sl  f_{\tau_{\ell - 1}}}{q_c}{A^{+}_{\tau_{\ell - 1}}}^2 &= \lpnorm{\Sl \chi_{\tau_{\ell - 1}}(D_x) f_{\tau_{\ell - 1}}}{q_c}{A^{+}_{\tau_{\ell - 1}}}^2 \\ 
    &= \Big|\int f_{\tau_{\ell - 1}}(x)\overline{\big(\Sl \chi_{\tau_{\ell - 1}}(D_x)\big)^* (\mathbbm{1}_{A^{+}_{\tau_{\ell - 1}}}g)(x)} dx \Big|^2 \\
     &\leq \int \big|\big(\Sl \chi_{\tau_{\ell - 1}}(D_x)\big)^* (\mathbbm{1}_{A^{+}_{\tau_{\ell - 1}}}g)(x) \big|^2 dx \\
    &= \iint \big(\Sl \chi_{\tau_{\ell - 1}}(D_x)\big)^* (\mathbbm{1}_{A^{+}_{\tau_{\ell - 1}}}g)(x) \overline{\big(\Sl \chi_{\tau_{\ell - 1}}(D_x)\big)^* (\mathbbm{1}_{A^{+}_{\tau_{\ell - 1}}}g)(x)} dx \\
     &= \iint \Sl \chi_{\tau_{\ell -1}}(D_x) \big( \Sl \chi_{\tau_{\ell - 1}}(D_x)\big)^*\big( \mathbbm{1}_{A^{+}_{\tau_{\ell - 1}}}g\big)(x,t) \overline{\big( \mathbbm{1}_{A^{+}_{\tau_{\ell - 1}}}g\big)(x,t)}dxdt \\
    &=\iint L_{\tau_{\ell - 1}}^{\la}\big( \mathbbm{1}_{A^{+}_{\tau-{\ell - 1}}}g\big)(x,t) \overline{\big( \mathbbm{1}_{A^{+}_{\tau_{\ell - 1}}}g\big)(x,t)}dxdt \\ 
    &+ \iint G_{\tau_{\ell - 1}}^{\la}\big( \mathbbm{1}_{A^{+}_{\tau_{\ell - 1}}}g\big)(x,t) \overline{\big( \mathbbm{1}_{A^{+}_{\tau_{\ell - 1}}}g\big)(x,t)}dxdt \\ 
    &= I + II
\end{split}
\end{equation}
where $L_{\tau_{\ell -1}}^{\la}$ is the operator with kernel equaling that of $\Sl \chi_{\tau_{\ell - 1}}(D_x)\big(\Sl \chi_{\tau_{\ell - 1}}(D_x)\big)^*$ if $|t-s| \leq 1$ and $0$ otherwise, i.e. 
\begin{equation} \label{large height local kernel I}
    L_{\tau_{\ell - 1}}^{\la}(x,t;y,s) = \\
    { \begin{cases} 

    &\Sl \chi_{\tau_{\ell - 1}}(D_x)\big(\Sl \chi_{\tau_{\ell - 1}}(D_x)\big)^{*}(x,t;y,s), \ \text{if } |t-s| \leq 1 \\ 
    
    & \hfill 0 \hfill, \ \ \ \ \ \text{otherwise} 

    \end{cases} 
}
\end{equation}
where
\begin{equation} \label{large height local kernel II}
    \Sl \chi_{\tau_{\ell - 1}}(D_x)\big(\Sl \chi_{\tau_{\ell - 1}}(D_x)\big)^{*}(x,t;y,s) = \eta(\delta t)\eta(\delta s)\big(\chi_{\tau_{\ell - 1}}^2(D_x)\beta^2(D_x/\la)e^{-i\la^{-1}(t-s)\laplac{\Tn}}\big)(x,y).
\end{equation}
It is straightforward to see that the short time Strichartz estimate of Burq-Gerard-Tzvetkov (\ref{short time dyadic estimate}) from \cite{bgtstrichartzmanifold} implies that
\begin{equation}
    \Lpopnormalt{L_{\tau_{\ell -1}}^{\la}}{q_c'}{x,t}{q_c}{x,t} = O\big(\la^{\frac{2}{q_c}}\big).
\end{equation}
Then we can use this and H\"{o}lder's inequality to see that 
\begin{equation} \label{large height 1st term estimate}
        \big| I \big| \leq \lpn{L_{\tau_{\ell - 1}}^{\la}\big( \mathbbm{1}_{A^{+}_{\tau_{\ell - 1}}}g\big)}{q_c} \cdot \lpn{\mathbbm{1}_{A^{+}_{\tau_{\ell - 1}}}g}{q_c'} \lesssim \la^{\frac{2}{q_c}} \lpn{\mathbbm{1}_{A^{+}_{\tau_{\ell - 1}}}g}{q_c'}^2 \leq \la^{\frac{2}{q_c}}\lpn{g}{q_c'}^2 = \la^{\frac{2}{q_c}}.
\end{equation}
In Section 6, Corollary 6.0.1, we will show that 
\begin{equation} \label{kernel estimate for large heights}
\begin{split}
    \big| \Sl \chi_{\tau_{\ell - 1}}(D_x) \big(\Sl \chi_{\tau_{\ell - 1}}(D_x)\big)^*(x,t;y,s) \big| &\lesssim  \la^{\frac{n-1}{2}}|t-s|^{-\frac{n-1}{2}}(1+\dK{\ell-1}|t-s|)^{n-1}\\
    & \lesssim \delta_{K}^{(n-1)(\ell-1)} \big(\la \delta^{-1}\big)^{\frac{n-1}{2}}, \ \text{if } |t-s| \leq 2\delta^{-1}.
\end{split}
\end{equation}
It follows that 
\begin{equation}
    \Lpopnormalt{G_{\tau_{\ell - 1}}^{\la}}{1}{x,t}{\infty}{x,t} \leq C\delta_{K}^{(n-1)(\ell - 1)}(\la \delta^{-1})^{\frac{n-1}{2}}
\end{equation}
for some constant $C$. Combining this with H\"{o}lder's inequality we obtain
\begin{equation}
\begin{split}
    \big| II \big| &\leq C\delta_{K}^{(n-1)\ell}(\la \delta^{-1})^{\frac{n-1}{2}} \lpn{\mathbbm{1}_{A^{+}_{\tau_{\ell - 1}}}g}{1}^2 \leq C\delta_{K}^{(n-1)(\ell-1)}(\la \delta^{-1})^{\frac{n-1}{2}}\lpn{g}{q_c'}^{2}\cdot \lpn{\mathbbm{1}_{A^{+}_{\tau_{\ell - 1}}}}{q_c}^2 \\
    &= C\delta_{K}^{(n-1)(\ell-1)}(\la \delta^{-1})^{\frac{n-1}{2}}\lpn{\mathbbm{1}_{A^{+}_{\tau_{\ell - 1}}}}{q_c}^2.
\end{split}
\end{equation}
Recalling the definition of the set $A^{+}_{\tau_{\ell - 1}}$ in (\ref{height decomposition definitions}), we see that
\begin{equation}
\begin{split}
    \lpn{\mathbbm{1}_{A^{+}_{\tau_{\ell - 1}}}}{q_c}^2 &\leq \Big(C_0\big(\la \delta^{-1}\delta_{K}^{2(\ell - 1)}\big)^{\frac{n-1}{4}}\Big)^{-2} \lpnorm{\Sl f_{\tau_{\ell - 1}}}{q_c}{A^{+}_{\tau_{\ell - 1}}}^2 \\
    &= \Big(C_0^2 (\la \delta^{-1})^{\frac{n-1}{2}}\delta_{K}^{(n-1)(\ell - 1)}\Big)^{-1}\lpnorm{\Sl f_{\tau_{\ell - 1}}}{q_c}{A^{+}_{\tau-{\ell - 1}}}^2.
\end{split}
\end{equation}
Choosing $C_0$ large enough so that $C_0^{-2}C \leq \frac{1}{2}$ yields 
\begin{equation}
    \big| II \big| \leq \frac{1}{2}\lpnorm{\Sl f_{\tau_{\ell - 1}}}{q_c}{A^{+}_{\tau_{\ell - 1}}}.
\end{equation}
Combining this with our estimate for $I$ in (\ref{large height 1st term estimate}), we have
\begin{equation}
    \lpnorm{\Sl f_{\tau_{\ell - 1}}}{q_c}{A^{+}_{\tau_{\ell - 1}}}^2 \leq C \la^{\frac{2}{q_c}} + \frac{1}{2}\lpnorm{\Sl f_{\tau_{\ell  -1}}}{q_c}{A^{+}_{\tau_{\ell - 1}}}^2
\end{equation}
 which completes the proof of Proposition 3.2.

 \section{Small Height Estimates at Scale $\dK{\ell-1}$, for $1 \leq \ell < K$}

We now turn our attention to proving Proposition 3.3. To do so, we will use an elementary variant of the Broad-Narrow argument originally developed by Bourgain-Guth in \cite{BourgainGuthMultilinear}. The decomposition contained in the following lemma is similar to Lemma 7.2 in \cite{demeter2020fourier}.

\begin{lemma}
    There are constants $C, C_1, C_2$, independent of $\la$ and $\delta$, such that 
    \begin{equation}
        \Big|\sum_{\tau_{\ell} \subset \tau_{\ell - 1}}\Sl f_{\tau_{\ell}}(x,t)\Big| \leq C_1 \max_{\tau_{\ell} \subset \tau_{\ell - 1}}|\Sl f_{\tau_{\ell}}(x,t) | + C_2 \dK{-C}\max_{\substack{\tau_{\ell},\tau_{\ell}' \subset \tau_{\ell - 1} \\ dist(\tau_{\ell}, \tau_{\ell}') \gtrsim \la \dK{\ell}}}|\Sl f_{\tau_{\ell}}(x,t)\Sl f_{\tau_{\ell}'}(x,t)|^{\frac{1}{2}}.
    \end{equation}
\end{lemma}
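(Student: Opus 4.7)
The plan is a pointwise Bourgain--Guth style dichotomy at each fixed $(x,t)$: either one sub-cube's contribution dominates (the \emph{narrow} regime, which gives the linear maximum term) or the large values must be distributed across at least two spatially separated sub-cubes (the \emph{broad} regime, in which the bilinear maximum captures the sum up to a power of $\dK{-1}$). Since the partition of $\tau_{\ell-1}$ into sub-cubes $\tau_\ell$ has cardinality $N \lesssim \dK{-(n-1)}$, the size of this power will be dictated by $N$.

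First I would fix $(x,t)$, set $M = \max_{\tau_\ell \subset \tau_{\ell-1}} |\Sl f_{\tau_\ell}(x,t)|$, and let $\tau^*$ attain this maximum. Writing the relation $dist(\tau_\ell,\tau_\ell') \gtrsim \la \dK{\ell}$ as $dist(\tau_\ell, \tau_\ell') \geq c_0 \la \dK{\ell}$ for a fixed dimensional constant $c_0 > 0$, the number of sub-cubes within distance $c_0 \la \dK{\ell}$ of $\tau^*$ is bounded by a dimensional constant $M_0$. I would then split on the threshold $\dK{n-1} M$. In the narrow case, every $\tau_\ell$ with $dist(\tau_\ell,\tau^*) \geq c_0 \la \dK{\ell}$ satisfies $|\Sl f_{\tau_\ell}(x,t)| \leq \dK{n-1} M$; splitting the sum into cubes near and far from $\tau^*$ yields at most $M_0 M + N \cdot \dK{n-1} M \lesssim M$, producing the first term with $C_1 \sim M_0 + 1$. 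In the broad case there exists $\tau^{**}$ with $dist(\tau^{**},\tau^*) \geq c_0 \la \dK{\ell}$ and $|\Sl f_{\tau^{**}}(x,t)| > \dK{n-1} M$; then $|\Sl f_{\tau^*}(x,t) \Sl f_{\tau^{**}}(x,t)|^{1/2} > \dK{(n-1)/2} M$, so combining with the crude bound $|\sum_{\tau_\ell} \Sl f_{\tau_\ell}(x,t)| \leq N M$ yields the second term with $C = 3(n-1)/2$ and $C_2 \sim 1$.

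There is no real analytic obstacle here, only combinatorial bookkeeping. The step requiring the most care is the choice of threshold $\dK{n-1}$: it must be small enough that the $N \lesssim \dK{-(n-1)}$ far cubes cannot collectively exceed $M$ in the narrow case, yet the resulting bilinear exponent in the broad case must remain an absolute constant. Any threshold $\dK{A}$ with $A \geq n-1$ works and produces $C = A + (n-1)/2$; since the later Strichartz application in Proposition 3.3 only requires $C$ to be a dimensional constant independent of $\la$ and $\delta$, the simplest choice $A = n-1$ suffices.
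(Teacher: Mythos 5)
Your proof is correct and follows essentially the same route as the paper: fix $(x,t)$, take the maximizing cube $\tau^*$, threshold at $\dK{n-1}\max_{\tau_\ell}|\Sl f_{\tau_\ell}(x,t)|$, and dichotomize on whether some large sub-cube is far from $\tau^*$ (paper's $S^{\text{big}}_{\tau_{\ell-1}}$ set versus your near/far split are just different bookkeeping for the same argument), yielding the same exponent $C = 3(n-1)/2$. (A minor aside: for a general threshold $\dK{A}M$ the resulting exponent is $C = (n-1) + A/2$, not $A + (n-1)/2$, though the two agree at $A = n-1$.)
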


\begin{proof}
Fix a point $(x,t) \in \Tn \times [0,\delta^{-1}]$. Let $\tau_{\ell}^{*} \subset \tau_{\ell - 1}$ be such that 
\begin{equation}
    |\Sl f_{\tau_{\ell}^{*}}(x,t)| = \max_{\tau_{\ell} \subset {\tau_{\ell - 1}}} |\Sl f_{\tau_{\ell}}(x,t)|.
\end{equation} 
Next, define the set
\begin{equation}
     S_{\tau_{\ell-1}}^{\text{big}} = \{\tau_{\ell} \subset \tau_{\ell - 1} \ : \ |\Sl f_{\tau_{\ell}}(x,t)| \geq \dK{n-1} |\Sl f_{\tau_{\ell}^{*}}(x,t)|\}.   
\end{equation}
First, suppose that there is some $\tau_{\ell}' \in S_{\tau_{\ell - 1}}^{\text{big}}$ such that $dist(\tau_{\ell}^{*}, \tau_{\ell}') \gtrsim \la \dK{\ell}$. Since $\#\{\tau_{\ell}\subset \tau_{\ell - 1}\} \lesssim \dK{-(n-1)}$ we have
\begin{equation}
    \Big|\sum_{\tau_{\ell} \subset \tau_{\ell - 1}} \Sl f_{\tau_{\ell}}(x,t)\Big| \lesssim \dK{-(n-1)}|\Sl f_{\tau_{\ell}^{*}}(x,t)| \leq \dK{-\frac{3(n-1)}{2}}| \Sl f_{\tau_{\ell}^{*}}(x,t) \Sl f_{\tau_{\ell}'}(x,t)|^{\frac{1}{2}}. 
\end{equation}
On the other hand, if every $\tau_{\ell} \in S_{\tau_{\ell - 1}}^{\text{big}}$ satisfies $dist(\tau_{\ell}, \tau_{\ell}^{*}) \lesssim \la \dK{\ell}$ then $\# S_{\tau_{\ell - 1}}^{\text{big}} = O(1)$. Thus, we have
\begin{equation}
\begin{split}
    \Big| \sum_{\tau_{\ell} \subset \tau_{\ell - 1}} \Sl f_{\tau_{\ell}}(x,t)\Big| &\leq \Big| \sum_{\tau_{\ell} \in S_{\tau_{\ell - 1}}^{\text{big}}} \Sl f_{\tau_{\ell}}(x,t)\Big| + \Big| \sum_{\tau_{\ell} \notin S_{\tau_{\ell - 1}}^{\text{big}}} \Sl f_{\tau_{\ell}}(x,t)\Big| \\
    & \leq \#S_{\tau_{\ell - 1}}^{\text{big}} |\Sl f_{\tau_{\ell}^{*}}(x,t)| + \#\{\tau_{\ell} \subset \tau_{\ell - 1}\} \dK{n-1}|\Sl f_{\tau_{\ell}^{*}}(x,t)| \\
    &\lesssim |\Sl f_{\tau_{\ell}^{*}}(x,t)|.
\end{split}
\end{equation}
\end{proof}
\noindent
\begin{remark}
    Note that the proof shows that we can take $C$ in the exponent of $\dK{-1}$ to be $C = \frac{3(n-1)}{2}$ but this is unimportant.
\end{remark}

We can replace the maximums in Lemma 5.1 with an $\ell^2$ norm and an $\ell^{q}$ norm over the collections $\{\tau_{\ell} \subset \tau_{\ell - 1}\}$ to attain a uniform estimate for all $(x,t) \in \Tn\times[0, \delta^{-1}]$:
\begin{equation} \label{broad-narrow decomposition}
\begin{split}
    \Big|\sum_{\tau_{\ell} \subset \tau_{\ell - 1}}\Sl f_{\tau_{\ell}}(x,t)\Big| &\leq C_1 \Big( \sum_{\tau_{\ell} \subset \tau_{\ell - 1}}|\Sl f_{\tau_{\ell}}(x,t) |^2 \Big)^{\frac{1}{2}} \\
    &+ C_2 \dK{-C}\Big(\sum_{\substack{\tau_{\ell},\tau_{\ell}' \subset \tau_{\ell - 1} \\ dist(\tau_{\ell}, \tau_{\ell}') \gtrsim \la \dK{\ell}}}|\Sl f_{\tau_{\ell}}(x,t)\Sl f_{\tau_{\ell}'}(x,t)|^{\frac{q}{2}}\Big)^{\frac{1}{q}}.
\end{split} 
\end{equation}
We almost have all of the tools we need to prove Proposition 3.3. The next lemma is a consequence of Tao's bilinear restriction estimate for the paraboloid from \cite{taobilinear}.

\begin{lemma} If $\tau_{\ell}, \tau_{\ell}' \subset \tau_{\ell - 1}$ with $dist(\tau_{\ell}, \tau_{\ell}') \gtrsim \la \dK{\ell}$ then for $q = \frac{2(n+2)}{n}$ we have
\begin{equation}
    \lpnorm{\big|\Sl f_{\tau_{\ell}}\Sl f_{\tau_{\ell}'}\big|^{\frac{1}{2}}}{q}{\Tn\times[0,\delta^{-1}]} \lesssim_{\varepsilon} \la^{\frac{n-1}{2} -\frac{n}{q} + \varepsilon }\dK{\ell(\frac{n-1}{2} - \frac{n+1}{q})} \delta^{-\frac{1}{q}} \Big(\altlpnorm{f_{\tau_{\ell}}}{2}\altlpnorm{f_{\tau_{\ell}'}}{2}\Big)^{\frac{1}{2}}.
\end{equation}
\end{lemma}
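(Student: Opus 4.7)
The plan is to reduce the bilinear estimate on the torus to its Euclidean analogue via a parabolic rescaling, then apply Tao's sharp bilinear restriction theorem for the paraboloid at the endpoint $q = \frac{2(n+2)}{n}$. First I would rescale $y = \la x$, $s = \la t$, $\xi = k/\la$, which turns the phase $x\cdot k + \la^{-1}t|k|^2$ into the standard paraboloid phase $y\cdot\xi + s|\xi|^2$. Under this rescaling, the two frequency cubes $\tau_\ell, \tau_{\ell}'$ become transversal caps $\omega_\ell = \tau_\ell/\la$ and $\omega_{\ell}' = \tau_{\ell}'/\la$ of side $\sim \dK{\ell}$ inside $[-1,1]^{n-1}$ with separation $\gtrsim \dK{\ell}$, the dual lattice becomes $\la^{-1}\Zn$, and the spacetime domain $\Tn \times [0,\delta^{-1}]$ turns into the tube $[0, 2\pi\la]^{n-1} \times [0, \la/\delta]$ with Jacobian $dx\,dt = \la^{-n}\,dy\,ds$. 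Because $\la/\delta$ can be much larger than the spatial periodicity scale $\la$ when $\delta < 1$ (the regime of interest), I then split the time interval $[0, \la/\delta]$ into $\lesssim \delta^{-1}$ pieces of length $\sim \la$, decomposing the rescaled spacetime box into $\lesssim \delta^{-1}$ parabolic cubes of side $\la$.

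On each such cube, a smoothed Poisson-summation / transference argument identifies the torus sum $\sum_{k\in\tilde\tau_\ell} \hat f(k) e(y\xi + s|\xi|^2)|_{\xi = k/\la}$ with a Euclidean extension operator $E g_\ell(y, s) = \int g_\ell(\xi) e(y\xi + s|\xi|^2)\,d\xi$, where $g_\ell$ is supported on $\omega_\ell$ and satisfies $\altlpnorm{g_\ell}{2} \sim \la^{(n-1)/2}\altlpnorm{f_{\tau_\ell}}{2}$. I then invoke Tao's $\varepsilon$-endpoint bilinear restriction estimate on the paraboloid: for transversal unit caps and a parabolic ball $B_R$,
\[
\altlpnorm{E g_1 \, E g_2}{L^{q/2}(B_R)} \lesssim_\varepsilon R^\varepsilon \altlpnorm{g_1}{2}\altlpnorm{g_2}{2}.
\]
A standard parabolic rescaling from unit caps to $\dK{\ell}$-caps with matching $\dK{\ell}$-separation produces the gain $\dK{\ell(n-1 - 2(n+1)/q)}$; taking $R \sim \la$ on each piece gives a per-piece bound $\la^{\varepsilon}\dK{\ell(n-1 - 2(n+1)/q)}\altlpnorm{g_\ell}{2}\altlpnorm{g_{\ell}'}{2}$.

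Summing the $q/2$-th powers over the $\lesssim \delta^{-1}$ time pieces contributes a factor $\delta^{-2/q}$; rescaling back via $dy\,ds = \la^n\,dx\,dt$ together with $\altlpnorm{g_\ell}{2} \sim \la^{(n-1)/2}\altlpnorm{f_{\tau_\ell}}{2}$, and then taking square roots to pass from $L^{q/2}$ to the $|\cdot|^{1/2}$ formulation of the $L^q$ norm, produces exactly the bound of Lemma 5.2 with the factors $\la^{(n-1)/2 - n/q + \varepsilon}$, $\dK{\ell((n-1)/2 - (n+1)/q)}$, and $\delta^{-1/q}$. The main technical obstacle is the transference step: controlling the Poisson tails at the endpoint exponent is delicate, but partitioning time into $\la$-long pieces is chosen precisely so that within each parabolic box the discrete torus sum and the continuous extension operator agree up to Schwartz-tail errors absorbable into the $\la^\varepsilon$ loss, preventing the accumulation that would otherwise arise when $\delta \ll 1$.
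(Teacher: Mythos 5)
Your proposal follows essentially the same route as the paper: rescale by $\la$ so that the torus sum becomes a discrete exponential sum over $\la^{-1}$-separated points on the paraboloid with frequencies in $\dK{\ell}$-transversal caps, partition the rescaled time interval $[0,\la\delta^{-1}]$ into $\sim\delta^{-1}$ blocks of length $\la$, transfer the discrete sum to a Euclidean bilinear extension on each $\la$-ball, and apply Tao's bilinear restriction theorem with parabolic rescaling before summing the $q$-th powers and undoing the scaling. The paper packages the ``discrete-to-continuous'' transference as a standalone bilinear exponential-sum bound (Lemma 5.5, citing Demeter's book) and applies it with $R=\la$, $D=\dK{\ell}$, but the underlying ideas and the resulting bookkeeping of the powers of $\la$, $\dK{\ell}$, and $\delta$ match yours exactly.
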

We postpone the proof of Lemma 5.2 for now and instead show how we can use it to prove Proposition 3.3. To start, fix $q = \frac{2(n+2)}{n}$. Applying (\ref{broad-narrow decomposition}) we have

\begin{equation} \label{estimate in terms of broad and narrow terms}
\begin{split}
    &\lpnorm{\Sl f_{\tau_{\ell - 1}}}{q_c}{A_{\tau_{\ell - 1}}^{-}}^{q_c} = \iint_{A_{\tau_{\ell - 1}}^{-}}|\Sl f_{\tau_{\ell - 1}}(x,t)|^{q_c - q}|\Sl f_{\tau_{\ell - 1}}(x,t)|^{q}dxdt \\
    &=  \iint_{A_{\tau_{\ell - 1}}^{-}}|\Sl f_{\tau_{\ell - 1}}(x,t)|^{q_c - q}\Big|\sum_{\tau_{\ell} \subset \tau_{\ell - 1}}\Sl f_{\tau_{\ell}}(x,t)\Big|^{q}dxdt \\ 
    &\leq C_1^{q}\iint_{A_{\tau_{\ell - 1}}^{-}}|\Sl f_{\tau_{\ell - 1}}(x,t)|^{q_c - q}\Big(\sum_{\tau_{\ell} \subset \tau_{\ell - 1}}|\Sl f_{\tau_{\ell}}(x,t)|^{2}\Big)^{\frac{q}{2}}dxdt \\
    &+ C_2^{q}\dK{-Cq} \sum_{\substack{\tau_{\ell},\tau_{\ell}' \subset \tau_{\ell - 1} \\ dist(\tau_{\ell}, \tau_{\ell}') \gtrsim \la \dK{\ell}}} \iint_{A_{\tau_{\ell - 1}}^{-}}|\Sl f_{\tau_{\ell - 1}}(x,t)|^{q_c - q}  |\Sl f_{\tau_{\ell}}(x,t)\Sl f_{\tau_{\ell}'}(x,t)|^{\frac{q}{2}}dxdt \\ 
    & = I + II.
\end{split}
\end{equation}
First, we deal with $I$. Let $\Sq{f_{\tau_{\ell - 1}}}{}{\ell} = \Big( \sum_{\tau_{\ell} \subset \tau_{\ell - 1}} |\Sl f_{\tau_{\ell}}|^2\Big)^{\frac{1}{2}}$ be the square function associated to our partition of $\tau_{\ell - 1}$. Then 
\begin{equation} \label{narrow term estimate}
\begin{split}
    I &= C_1^{q} \iint_{A_{\tau_{\ell - 1}}^{-}}|\Sl f_{\tau_{\ell - 1}}(x,t)|^{q_c - q} |\Sq{f_{\tau_{\ell - 1}}}{}{\ell}(x,t)|^qdxdt \\
    &\leq C_1^{q}\lpnorm{\Sl f_{\tau_{\ell - 1}}}{q_c}{A_{\tau_{\ell - 1}}^{-}}^{q_c-q} \lpnorm{\Sq{f_{\tau_{\ell - 1}}}{}{\ell}}{q_c}{\Tn \times [0, \delta^{-1}]}^{q} \\
    &\leq \frac{1}{2}\lpnorm{\Sl f_{\tau_{\ell - 1}}}{q_c}{A_{\tau_{\ell - 1}}^{-}}^{q_c} + C\lpnorm{\Sq{f_{\tau_{\ell - 1}}}{}{\ell}}{q_c}{\Tn \times [0, \delta^{-1}]}^{q_c} \\
    &\leq \frac{1}{2}\lpnorm{\Sl f_{\tau_{\ell - 1}}}{q_c}{A_{\tau_{\ell - 1}}^{-}}^{q_c} + C\Big(\sum_{\tau_{\ell} \subset \tau_{\ell - 1}} \lpnorm{\Sl f_{\tau_{\ell}}}{q_c}{\Tn \times [0, \delta^{-1}]}\Big)^{\frac{q_c}{2}}.
\end{split} 
\end{equation}
where we used H\"{o}lder's inequality in the second line, then Young's inequality (with $\epsilon = \frac{1}{2 C_{1}^{q}}$, see Appendix B of \cite{EvansPDE} for more details) in the second-to-last line and, finally, Minkowski's inequality in the last line.

Now, we need to handle $II$. Note that by definition of the set $A_{\tau_{\ell - 1}}^{-}$ in (\ref{height decomposition definitions}) and Lemma 5.2 we have
\begin{equation} \label{broad term estimate}
\begin{split}
    II &= C_2^{q}\dK{-Cq}\sum_{\substack{\tau_{\ell},\tau_{\ell}' \subset \tau_{\ell - 1} \\ dist(\tau_{\ell}, \tau_{\ell}') \gtrsim \la \dK{\ell}}} \iint_{A_{\tau_{\ell - 1}}^{-}}|\Sl f_{\tau_{\ell - 1}}(x,t)|^{q_c - q}  |\Sl f_{\tau_{\ell}}(x,t)\Sl f_{\tau_{\ell}'}(x,t)|^{\frac{q}{2}}dxdt \\
    &\lesssim \dK{-Cq} \lpnorm{\Sl f_{\tau_{\ell - 1}}}{\infty}{A_{\tau_{\ell - 1}}^{-}}^{q_c - q}\sum_{\substack{\tau_{\ell},\tau_{\ell}' \subset \tau_{\ell - 1} \\ dist(\tau_{\ell}, \tau_{\ell}') \gtrsim \la \dK{\ell}}} \iint \big| \Sl f_{\tau_{\ell}}(x,t)\Sl f_{\tau_{\ell}'}(x,t)\big|^{\frac{q}{2}}dxdt \\
    &\lesssim_{\varepsilon}  \Big(\big(\la \delta^{-1}\big)^{\frac{n-1}{4}}\dK{\frac{n-1}{2}(\ell - 1)}\Big)^{q_c - q} \la^{q\frac{n-1}{2} - n + \varepsilon} \dK{\ell(q\frac{n-1}{2} - n -1)} \delta^{-1} \dK{-Cq}\sum_{\substack{\tau_{\ell},\tau_{\ell}' \subset \tau_{\ell - 1} \\ dist(\tau_{\ell}, \tau_{\ell}) \gtrsim \la \dK{\ell}}} \Big( \altlpnorm{f_{\tau_{\ell}}}{2} \lVert f_{\tau_{\ell}'} \rVert_{2}\Big)^{\frac{q}{2}} \\
    & \leq \la^{q\frac{n-1}{2} - n + \varepsilon} \dK{\ell(q\frac{n-1}{2} - n -1)} \delta^{-1} \dK{-Cq} \Big(\big(\la \delta^{-1}\big)^{\frac{n-1}{4}}\dK{\frac{n-1}{2}(\ell - 1)}\Big)^{q_c - q} \big( \#\{\tau_{\ell} \subset \tau_{\ell - 1}\} \big)^2 \altlpnorm{f_{\tau_{\ell - 1}}}{2}^q \\
    & \lesssim \la^{q\frac{n-1}{2} - n + \varepsilon} \dK{\ell(q\frac{n-1}{2} - n -1)} \delta^{-1} \dK{-Cq} \Big(\big(\la \delta^{-1}\big)^{\frac{n-1}{4}}\dK{\frac{n-1}{2}(\ell - 1)}\Big)^{q_c - q} \dK{-2(n-1)} \altlpnorm{f_{\tau_{\ell - 1}}}{2}^q.
\end{split}
\end{equation}
Note that for the exponent on the first factor of $\dK{\ell}$ we have
\begin{equation}
    q\tfrac{n-1}{2} - n - 1= \tfrac{n-1}{2}\big(q - \tfrac{2(n+1)}{n-1}\big) = -\tfrac{n-1}{2}(q_c - q).
\end{equation}
Thus, the first factor of $\dK{\ell}$ cancels out with the second factor. Also, if we collect the exponents on the factors of $\la$ we have
\begin{equation}
    q\tfrac{n-1}{2} - n + \tfrac{n-1}{4}(q_c - q) = \tfrac{n-1}{4}(q_c + q) - n = \tfrac{n-1}{n}.
\end{equation}
Thus, (\ref{broad term estimate}) yields
\begin{equation} \label{final broad term estimate}
    II \lesssim_{\varepsilon} \la^{\frac{n-1}{n} + \varepsilon}\delta^{-\frac{n+1}{n}}\dK{-C'}\altlpnorm{f_{\tau_{\ell - 1}}}{2}^{q} = \la^{\frac{n-1}{n} + \varepsilon}\delta^{-\frac{n+1}{n}}\dK{-C'}\altlpnorm{f_{\tau_{\ell - 1}}}{2}^{q_c}.
\end{equation}
where the last equality follows from the fact that $\altlpnorm{f_{\tau_{\ell-1}}}{2} = 1$.

Combining (\ref{estimate in terms of broad and narrow terms}), (\ref{narrow term estimate}) and (\ref{final broad term estimate}) we have 
\begin{equation}
\begin{split}
        \lpnorm{\Sl f_{\tau_{\ell - 1}}}{q_c}{A_{\tau_{\ell - 1}}^{-}}^{q_c} &\leq \frac{1}{2}\lpnorm{\Sl f_{\tau_{\ell - 1}}}{q_c}{A_{\tau_{\ell - 1}}^{-}}^{q_c} + C\Big( \sum_{\tau_{\ell} \subset \tau_{\ell - 1}}\lpnorm{\Sl f_{\tau_{\ell}}}{q_c}{\Tn \times [0, \delta^{-1}]}^2\Big)^{\frac{q_c}{2}} \\
        &+ C_{\varepsilon}\la^{\frac{n-1}{n} + \varepsilon}\delta^{-\frac{n+1}{n}}\dK{-C}\altlpnorm{f_{\tau_{\ell - 1}}}{2}^{q_c}
\end{split}
\end{equation}
which yields Proposition 3.3.

\subsection{Consequences of Bilinear Restriction and the Proof of Lemma 5.2}
We now turn to the proof of Lemma 5.2. As we stated before, Lemma 5.2 will be a consequence of Tao's bilinear restriction estimate for the paraboloid from \cite{taobilinear}. Recall that the bilinear estimate yielded improved results for the linear version of the restriction problem by using the result of Tao-Vargas-Vega in \cite{TaoVargasVega1998}. Their methods inspired the variable-coefficient analog that was developed and used in the work of Blair-Huang-Sogge \cite{BlairHuangSoggeImprovedSpectral}, \cite{BlairHuangSoggeStrichartz} and Huang-Sogge \cite{HuangSoggeStrichartz}, \cite{huang2024curvature}. 

Before we state the bilinear restriction estimate, we need to introduce the mathematical objects involved in its statement. For any $f:[-1,1]^{n-1} \to \C$ define the extension operator for the paraboloid
\begin{equation}
    Ef(x,t) = \int_{[-1,1]^{n-1}} f(\xi)e(x\cdot \xi + t|\xi|^2)d\xi.
\end{equation}
Also, if $S \subset [-1,1]^{n-1}$ let 
\begin{equation}
    E_{S}f(x,t) = \int_{S} f(\xi)e(x\cdot \xi + t|\xi|^2)d\xi
\end{equation}
Now, we state the spatially localized version of the bilinear restriction estimate.
\begin{theorem}
    Let $n \geq 2$ and let $\omega_1, \omega_2$ be two cubes in $[-1,1]^{n-1}$ with $\text{dist}(\omega_1, \omega_2) > 0$. Then for each $p \geq \frac{2(n+2)}{n}$, any ball $B_R$ of radius $R \gtrsim 1$, $f:\omega_1 \cup \omega_2 \to \C$, and any $\varepsilon > 0$ we have
    \begin{equation} \label{bilinear restriction orginal}
        \lpnorm{\big|E_{\omega_1}f E_{\omega_2}f\big|^{\frac{1}{2}}}{p}{B_R} \lesssim_{\varepsilon} R^{\varepsilon}\Big( \lpnorm{f}{2}{\omega_1} \lpnorm{f}{2}{\omega_2}\Big)^{\frac{1}{2}}.
    \end{equation}
\end{theorem}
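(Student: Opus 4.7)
This is Tao's bilinear restriction theorem from \cite{taobilinear}, so I will only sketch the architecture of a proof. The plan is an induction on the radius $R$, combined with a wave packet decomposition adapted to the transversality of $\omega_1$ and $\omega_2$. First I would reduce to the endpoint $p = \frac{2(n+2)}{n}$: the bound $\|E_{\omega_i}f\|_\infty \lesssim \|f\|_2$ is trivial, and interpolating this with the endpoint on $B_R$ (using that $E_{\omega_1}f \cdot E_{\omega_2}f$ has Fourier support in a compact set, so that $L^p(B_R)$ norms interpolate efficiently) covers all larger $p$. A parabolic rescaling then normalizes $\omega_1,\omega_2$ to unit cubes separated by a constant and with unit transversality between the normals to the paraboloid above them.

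The core step is a wave packet decomposition at scale $R^{-1/2}$: partition each $\omega_i$ into caps $\theta$ of side $R^{-1/2}$ and tile physical space by tubes $T$ of thickness $R^{1/2}$ and length $R$, oriented along the normal to the paraboloid at the center of $\theta$. Writing $E_{\omega_i}f \approx \sum_{\theta,T} \phi_{\theta,T}$ with each $\phi_{\theta,T}$ essentially supported in a tube $T$ and frequency-localized to $\theta$, the product $E_{\omega_1}f \cdot E_{\omega_2}f$ becomes a double sum over pairs $(T_1,T_2)$ with $T_i$ attached to $\omega_i$. The crucial geometric input from transversality is that a tube $T_1$ from $\omega_1$ and a tube $T_2$ from $\omega_2$ meet in a parallelepiped of volume $\lesssim R^{n/2}$ rather than the $R^n$ one would get if the directions were parallel; this bilinear Kakeya-type bound is what produces the gain over the still-open linear restriction problem.

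The main difficulty is to close the induction on $R$. I would split the contributing tube-pairs into a \emph{broad} regime (where many tubes from each cap pile up near a common point) and a \emph{narrow} regime (where pairs are essentially pointwise-disjoint). Narrow pairs are controlled directly by the transversal volume bound together with $L^2$ orthogonality of wave packets with distinct $T$. Broad contributions are handled by pigeonholing onto a smaller subball and invoking the induction hypothesis at scale $R/2$, which yields a recursion of the form $A(R) \leq C R^{\varepsilon_0} + \eta\, A(R/2)$ for some absolute $\eta < 1$; iterating the recursion gives $A(R) \lesssim_\varepsilon R^\varepsilon$. The logarithmic $R^\varepsilon$ loss is intrinsic to any induction-on-scales argument of this shape, and removing it is essentially the endpoint bilinear restriction conjecture, which remains open in this generality.
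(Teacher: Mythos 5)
The paper does not prove this theorem. It is cited verbatim from Tao's paper \cite{taobilinear} (``This is roughly the statement of Theorem 1.1 in \cite{taobilinear}''), and in this manuscript it is simply used as a black box to derive Lemma 5.2. So there is no ``paper's own proof'' to compare against; the natural thing to check is whether your sketch is an accurate account of Tao's argument.

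Your overview captures the right ingredients: reduction to the endpoint $p = \tfrac{2(n+2)}{n}$ by interpolation with the trivial $L^\infty$ bound, wave packets at scale $R^{1/2}$, the transversal tube-intersection volume bound $\lesssim R^{n/2}$, and an induction on scales that produces the $R^\varepsilon$ loss. Two points are worth tightening. First, the recursion is not literally $A(R) \leq C R^{\varepsilon_0} + \eta\,A(R/2)$ with a fixed contraction factor $\eta < 1$; Tao (following Wolff's cone argument and Tao--Vargas--Vega) inducts by passing from $B_R$ to subballs of radius $R^{1-\delta}$ and decomposing wave packets into those ``related'' and ``unrelated'' to each subball, with the $R^\varepsilon$ loss accumulating over $O(\varepsilon^{-1})$ or $O(\log\log R)$ iterations rather than from geometrically summing a fixed $\eta < 1$. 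Second, the ``broad/narrow'' terminology you use is really a Bourgain--Guth-era framing; Tao's original split is by concentration of the mass of one wave on a small set, which plays a structurally similar but not identical role. Neither issue affects the high-level correctness of your outline, and in any case the paper treats this result purely as an imported input.
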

\noindent This is roughly the statement of Theorem 1.1 in \cite{taobilinear}. Note that the implicit constant in Theorem 5.3 depends on the distance between the two cubes $\omega_1$ and $\omega_2$. To better understand this dependence, we introduce some additional notation. Let $C_{R}(p, D)$ be the smallest $C > 0$ such that for any two cubes $\omega_1, \omega_2 \subset [-1,1]^{n-1}$ with $dist(\omega_1, \omega_2) \gtrsim D$ and $f: \omega_1 \cup \omega_2 \to \C$ the following inequality holds on any ball $B_R \subset \mathbb{R}^n$:
\begin{equation}
     \lpnorm{\big|E_{\omega_1}f E_{\omega_2}f\big|^{\frac{1}{2}}}{p}{B_R} \leq C\Big( \lpnorm{f}{2}{\omega_1} \lpnorm{f}{2}{\omega_2}\Big)^{\frac{1}{2}}.
\end{equation}
A parabolic rescaling argument similar to the one used in Proposition 4.2 in \cite{demeter2020fourier} gives us the following lemma.
\begin{lemma}
    Let $\omega_1$ and $\omega_2$ be two cubes in $[-1,1]^{n-1}$ of side length $ 0 < D \lesssim 1$. Assume that the distance between the centers of the two cubes is $\gtrsim D$. Then for any $1 \leq p \leq \infty$ and any $f: \omega_1 \cup \omega_2 \to \mathbb{C}$ we have
    \begin{equation}
        \lpnorm{\big|E_{\omega_1}f E_{\omega_2}f\big|^{\frac{1}{2}}}{p}{B_R} \lesssim D^{\frac{n-1}{2} - \frac{n+1}{p}} C_{RD}\Big(p, \frac{1}{2}\Big)\Big(\lpnorm{f}{2}{\omega_1} \lpnorm{f}{2}{\omega_2}\Big)^\frac{1}{2}.
    \end{equation}
\end{lemma}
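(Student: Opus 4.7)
The plan is to apply a standard parabolic rescaling that reduces the bilinear estimate over cubes of side $D$ to the bilinear constant for unit cubes. Let $\xi_0$ denote the center of $\omega_1$ and write $\xi = \xi_0 + D\eta$. Expanding the phase,
\begin{align*}
    x\cdot\xi + t|\xi|^2 = (x\cdot\xi_0 + t|\xi_0|^2) + (Dx + 2Dt\xi_0)\cdot\eta + D^2 t |\eta|^2,
\end{align*}
so setting $\tilde{x} = Dx + 2Dt\xi_0$, $\tilde{t} = D^2 t$, and $g(\eta) = f(\xi_0 + D\eta)$, I would obtain
\begin{align*}
    E_{\omega_j} f(x,t) = e\bigl(x\cdot\xi_0 + t|\xi_0|^2\bigr)\, D^{n-1}\, E_{\tilde\omega_j} g(\tilde{x}, \tilde{t}), \qquad j = 1, 2,
\end{align*}
where $\tilde\omega_j = (\omega_j - \xi_0)/D$ are cubes of side length $1$ whose centers remain at distance $\gtrsim 1$. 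The unimodular prefactor disappears when taking $|E_{\omega_1} f\, E_{\omega_2} f|^{1/2}$.

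Next, I would change variables $(x,t) \mapsto (\tilde{x}, \tilde{t})$ inside the integral over $B_R$. The Jacobian equals $D^{n+1}$ and, since $|\xi_0| \lesssim 1$ and $D \lesssim 1$, the image of $B_R$ sits inside a ball of radius $\lesssim DR$. This yields
\begin{align*}
    \lpnorm{|E_{\omega_1}f\, E_{\omega_2}f|^{1/2}}{p}{B_R}^{p} \lesssim D^{(n-1)p - (n+1)} \lpnorm{|E_{\tilde\omega_1} g\, E_{\tilde\omega_2} g|^{1/2}}{p}{B_{cDR}}^{p}
\end{align*}
for some absolute constant $c$. The rescaled cubes need not lie in $[-1,1]^{n-1}$, but a common translation $\eta \mapsto \eta - v$ (absorbed into a redefined $g$) introduces identical unimodular Galilean modulations in both factors and a common shear $\tilde{x} \mapsto \tilde{x} + 2\tilde{t}\,v$ that preserves magnitudes; this places both cubes inside $[-1,1]^{n-1}$ without changing their center-distance or the size of the relevant ball (up to a constant). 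The definition of $C_{DR}(p, 1/2)$ then applies and bounds the last display by $C_{DR}(p,1/2)^{p} \bigl(\lpnorm{g}{2}{\tilde\omega_1} \lpnorm{g}{2}{\tilde\omega_2}\bigr)^{p/2}$.

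To conclude, I would use $\lpnorm{g}{2}{\tilde\omega_j}^{2} = D^{-(n-1)}\lpnorm{f}{2}{\omega_j}^{2}$ (from the same substitution), take $p$-th roots, and collect the powers of $D$:
\begin{align*}
    D^{(n-1) - (n+1)/p} \cdot D^{-(n-1)/2} = D^{(n-1)/2 - (n+1)/p},
\end{align*}
which is exactly the exponent in the statement. The only genuinely delicate point is the Galilean translation needed to bring the two rescaled cubes back inside $[-1,1]^{n-1}$ so that the constant $C_{DR}(p,1/2)$ may be invoked; once that shift is handled, everything else is bookkeeping of scaling factors.
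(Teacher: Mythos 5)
Your proposal is exactly the parabolic rescaling argument that the paper invokes (it cites Demeter's Proposition 4.2 rather than spelling it out), and the bookkeeping is correct: the affine change of variables $\tilde{x}=D(x+2t\xi_0)$, $\tilde{t}=D^2t$ has Jacobian $D^{n+1}$, it maps $B_R$ into a ball of radius $\lesssim DR$, and the factor $D^{n-1}$ from $d\xi=D^{n-1}d\eta$ together with $\|g\|_{L^2}^2=D^{-(n-1)}\|f\|_{L^2}^2$ assembles to $D^{(n-1)/2-(n+1)/p}$ as claimed. One small point worth tightening: after rescaling by $D$ the cubes $\tilde\omega_j$ have side length $1$ and center separation $\gtrsim 1$, so even after a common Galilean translation they land in a box $[-C,C]^{n-1}$ for some absolute $C$ rather than literally in $[-1,1]^{n-1}$; this is harmless (absorb $C$ into the implicit constant, or note that $C_R(p,\cdot)$ defined over $[-1,1]^{n-1}$ controls the analogous quantity over $[-C,C]^{n-1}$ by one more rescaling), and the paper glosses over the same point, but as written your sentence ``this places both cubes inside $[-1,1]^{n-1}$'' is a slight overstatement.
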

\noindent 
In other words, Lemma 5.2 yields that
\begin{equation} \label{relationship between bilinear restriction constants at different scales}
    C_{R}(p, D) \lesssim D^{\frac{n-1}{2} - \frac{n+1}{p}}C_{RD}\Big(p, \frac{1}{2}\Big).
\end{equation}
If $q = \frac{2(n+2)}{n}$, then Theorem 5.1 implies that 
\begin{equation}
    C_{R}\Big(q, \frac{1}{2}\Big) \lesssim_{\varepsilon} R^{\varepsilon} \ \text{for} \ R \gtrsim 1
\end{equation}
and combining this with (\ref{relationship between bilinear restriction constants at different scales}) we have
\begin{equation} \label{estimating constant at spatial scale R and freq distance D}
    C_{R}(q, D) \lesssim_{\varepsilon} D^{\frac{n-1}{2} - \frac{n+1}{q}} R^{\varepsilon}D^{\varepsilon} \ \text{for} \ RD \gtrsim 1.
\end{equation}

Now, once again, consider two cubes $\omega_1, \omega_2$ in $[-1,1]^{n-1}$ with side length $\sim D$ and $dist(\omega_1, \omega_2) \gtrsim D $. For any $S \subset [-1,1]^{n-1}$, let $N_S(R^{-1}) = \{(\xi, |\xi|^2 + \rho) \ : \ \xi \in S, |\rho| < R^{-1} \}$. It is a straightforward exercise using a change of variables and Minkowski's integral inequality to see that we have the following estimate for any two functions $F_1, F_2$ with $\text{supp}\widehat{F_i}  \subset N_{\omega_i}(R^{-1})$, $i = 1,2$:
\begin{equation}
    \lpnorm{\big| F_1 F_2 \big|^{\frac{1}{2}}}{p}{B_R} \lesssim C_{R}(p, D) R^{-\frac{1}{2}}\Big(\lpnorm{\widehat{F_1}}{2}{N_{\omega_1}(R^{-1})}\lpnorm{\widehat{F_2}}{2}{N_{\omega_2}(R^{-1})}\Big)^{\frac{1}{2}}.
\end{equation}
Finally, this estimate implies the following bilinear exponential sum bound.
\begin{lemma}
    Let $\omega_1, \omega_2$ be two cubes in $[-1,1]^{n-1}$ with side length $\sim D > 0$ such that dist$(\omega_1, \omega_2) \gtrsim D$. Let $R \gtrsim 1$ with $R^{-1} \lesssim D$ and let $B_{R}$ be any ball of radius $R$ in $\mathbb{R}^n$. Let $\Lambda_i = \{(\xi, |\xi|^2) \in \parab{n-1} :   \xi \in \omega_i \}$, $i  =1,2$ be two collections of $R^{-1}$ separated points on the paraboloid and let $\Xi_i = \{\xi \in \omega_i : (\xi, |\xi|^2) \in \Lambda_i \}$. Then we have
    \begin{equation} \label{bilinear exponential sum estimate}
        \norm{\Big| \prod_{i = 1}^{2} \sum_{\xi \in \Xi_i}a_{\xi} e(x \cdot \xi + t|\xi|^2) \Big|^{\frac{1}{2}}}_{L_{x,t}^{p}(B_R)} \lesssim R^{\frac{n-1}{2}} C_{R}(p, D) \Big( \prod_{i = 1}^{2} \litlpnorm{a_{\xi}}{2}{\Xi_i} \Big)^{\frac{1}{2}}
    \end{equation}
\end{lemma}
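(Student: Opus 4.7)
The plan is to reduce the discrete bilinear exponential sum bound to the continuous bilinear estimate recalled just before the statement, by replacing each Fourier character with a smooth wave packet that is concentrated on $B_R$ in physical space and on a tiny neighborhood of $(\xi,|\xi|^2)$ in frequency. First I would fix a Schwartz function $\psi:\R[n]\to\R[]$ with $|\psi|\geq 1$ on $B_R$, $\|\psi\|_\infty=O(1)$, and $\widehat{\psi}$ supported in a ball of radius $cR^{-1}$ about the origin, where $c$ is a small absolute constant to be chosen. Such a $\psi$ can be produced by translating the center of $B_R$ to the origin and dilating a fixed bump function whose Fourier transform is compactly supported. Then for $i=1,2$ I would set
\[
F_i(x,t) = \psi(x,t)\sum_{\xi\in\Xi_i}a_\xi\,e(x\cdot\xi+t|\xi|^2),
\]
so that by the convolution theorem $\widehat{F_i}(\eta,\tau)=\sum_{\xi\in\Xi_i}a_\xi\,\widehat{\psi}(\eta-\xi,\tau-|\xi|^2)$.

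Next, using the $R^{-1}$-separation of $\Lambda_i$ together with the identity $|\xi+\eta|^2 = |\xi|^2 + 2\xi\cdot\eta + |\eta|^2$, I would verify (after shrinking $c$ if necessary) that the translates of $\widehat{\psi}$ appearing in $\widehat{F_i}$ have pairwise disjoint supports and that each such support lies inside $N_{\omega_i}(R^{-1})$. Then the displayed estimate immediately preceding the statement, applied to this pair $F_1,F_2$, yields
\[
\lpnorm{|F_1 F_2|^{1/2}}{p}{B_R} \lesssim C_R(p,D)\,R^{-1/2}\,\big(\lpnorm{\widehat{F_1}}{2}{N_{\omega_1}(R^{-1})}\lpnorm{\widehat{F_2}}{2}{N_{\omega_2}(R^{-1})}\big)^{1/2}.
\]
By Plancherel and the disjointness of the translates, $\|\widehat{F_i}\|_2^2 = \|\psi\|_2^2\sum_{\xi\in\Xi_i}|a_\xi|^2$, and since $\psi$ is bounded and essentially supported on $B_R$ we have $\|\psi\|_2\lesssim R^{n/2}$. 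Since $|\psi|\geq 1$ on $B_R$, the left side of the display dominates $\lpnorm{|\prod_i \sum_{\xi\in\Xi_i}a_\xi e(x\cdot\xi+t|\xi|^2)|^{1/2}}{p}{B_R}$, and combining everything produces the claimed bound with the correct power $R^{-1/2}\cdot R^{n/2}=R^{(n-1)/2}$.

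The main technical point is managing the uncertainty-principle tradeoff in the construction of $\psi$: the window $\widehat{\psi}$ must be small enough that its shifts by the paraboloid points in $\Lambda_i$ remain disjoint and stay trapped in $N_{\omega_i}(R^{-1})$, while still being large enough (in physical space) to satisfy $|\psi|\geq 1$ throughout $B_R$. Both constraints sit at the scale $R^{-1}$, which is precisely why the $R^{-1}$-separation hypothesis on $\Lambda_i$ is the natural one; once $c$ is chosen to reconcile them, the rest of the argument is routine Plancherel and Hölder bookkeeping.
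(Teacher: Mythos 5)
Your proof is correct, and it is the standard transference argument: multiply the exponential sum by a Schwartz weight $\psi$ with $|\psi|\geq 1$ on $B_R$ and $\widehat\psi$ supported in a ball of radius $cR^{-1}$, then invoke the continuous bilinear bound on the resulting functions whose frequency supports land in $N_{\omega_i}(R^{-1})$, and finish with Plancherel (using disjointness of the translates) and $\|\psi\|_2\sim R^{n/2}$. The paper does not spell this out at all—it defers to Exercise 3.34 in Demeter's book—so your argument is exactly the intended proof; the only detail worth making explicit is that because $\xi$ may lie near the boundary of $\omega_i$, the supports of $\widehat{F_i}$ actually sit in $N_{\omega_i'}(R^{-1})$ for slightly dilated cubes $\omega_i'=\omega_i+B_{cR^{-1}}$, which is harmless since $R^{-1}\lesssim D$ keeps the separation $\gtrsim D$ and the constant $C_R(p,D)$ unchanged.
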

\noindent 
This is essentially just Exercise 3.34 in \cite{demeter2020fourier}. Now we can combine (\ref{estimating constant at spatial scale R and freq distance D}) with (\ref{bilinear exponential sum estimate}) to conclude that if $q = \frac{2(n+2)}{n}$ we have 
\begin{equation} \label{improved bilinear exponential sum estimate}
       \norm{\Big| \prod_{i = 1}^{2} \sum_{\xi \in \Xi_i}a_{\xi} e(x \cdot \xi + t|\xi|^2) \Big|^{\frac{1}{2}}}_{L_{x,t}^{q}(B_R)} \lesssim_{\varepsilon} R^{\frac{n-1}{2} + \varepsilon} D^{\frac{n-1}{2} - \frac{n+1}{q} + \varepsilon} \Big( \prod_{i = 1}^{2} \litlpnorm{a_\xi}{2}{\Xi_i} \Big)^{\frac{1}{2}}.
\end{equation}

Define the discrete extension operator $\extensionop{}{}$ so that for any $h(x) = \sum_{k \in \tilde{Q_\la}}\widehat{h}(k)e(x \cdot k)$ we have
\begin{equation}
    \extensionop{}{h}(x,t) = \sum_{\xi \in \la^{-1}\tilde{Q_{\la}}} \widehat{h^{\la}}(\xi)e(x \cdot \xi + t|\xi|^2)
\end{equation}
where for any $\xi \in \la^{-1}\tilde{Q_{\la}}$ we have $\widehat{h^{\la}}(\xi) = \widehat{h}(\la \xi)$. We can relate this discrete extension operator to our original dilated Schr\"{o}dinger operator via the following estimate:
\begin{equation} \label{extension op vs dilated schrodinger op}
\begin{split}
    |\Sl h(x,t)|  &= \big|\eta(\delta t) \sum_{k \in \tilde{Q_{\la}}} \widehat{h}(k)e(x \cdot k + \la^{-1}t|k|^2) \big| \lesssim \big| \sum_{\xi \in \la^{-1}\tilde{Q_{\la}}}\widehat{h}(\la \xi)e(x \cdot \la \xi + t \la |\xi|^2)\big| \\
    & = \big| \sum_{\xi \in \la^{-1}\tilde{Q_{\la}}}\widehat{h^{\la}}(\xi)e(\la x \cdot \xi + \la t  |\xi|^2)\big| = |\extensionop{}{h}(\la x, \la t)|
\end{split}
\end{equation}
Let $\tau_{\ell} \subset \tau_{\ell - 1}$, one of the cubes in our partition of $\tau_{\ell - 1}$, have center $c$ so that $\tau_{\ell} = c + [- \la\dK{\ell}, \la \dK{\ell}]^{n-1}$. Then we have 
\begin{equation}
\begin{split}
        \{ \xi \in \la^{-1}\tilde{Q_{\la}} : \widehat{f_{\tau_{\ell}}^{\la}}(\xi) \neq 0 \} &\subset \{\xi \in \la^{-1}\tilde{Q_{\la}} : \la \xi \in \tau_{\ell}\} \\
        &= \{\xi \in \la^{-1}\tilde{Q_{\la}} : \xi \in \la^{-1}\tau_{\ell} \} = \la^{-1}\tilde{\tau_{\ell}}. 
\end{split}
\end{equation}
where $\la^{-1} \tau_{\ell} = \la^{-1}c + [-\dK{\ell}, \dK{\ell}]^{n-1}$. Note that $\la^{-1}\tau_{\ell} \subset  [-2,2]^{n-1} $. Thus, we have
\begin{equation}
    \extensionop{}{f_{\tau_{\ell}}}(x,t) = \sum_{\xi \in \la^{-1}\tilde{\tau_{\ell}}} \widehat{f^{\la}}(\xi)e(x \cdot \xi + t|\xi|^2).
\end{equation}

To prove Lemma 5.2 we start by noticing that if $dist(\tau_{\ell}, \tau_{\ell}') \gtrsim \la \dK{\ell}$ then $dist(\la^{-1}\tau_{\ell} , \la^{-1}\tau_{\ell}') \gtrsim \dK{\ell}$. Thus, it follows from Lemma 5.5 with $R = \la$ and $D = \dK{\ell}$ that for $q = \frac{2(n+2)}{n}$ we have
\begin{equation} \label{estimate for extension op}
\begin{split}
    \lpnorm{\big|\extensionop{}{f_{\tau_{\ell}}} \extensionop{}{f_{\tau_{\ell}'}}\big|^{\frac{1}{2}}}{q}{B_\la} &\lesssim_\varepsilon \la^{\frac{n-1}{2} + \varepsilon}\delta_{K}^{\ell(\frac{n-1}{2} - \frac{n+1}{q})}\Big(\big( \sum_{\xi \in \la^{-1}\tilde{\tau_{\ell}}}|\widehat{f^{\la}}(\xi)|^2 \big)^{\frac{1}{2}}\big( \sum_{\xi \in \la^{-1}\tilde{\tau_{\ell}}'}|\widehat{f^{\la}}(\xi)|^2 \big)^{\frac{1}{2}}\Big)^{\frac{1}{2}} \\
    &= \la^{\frac{n-1}{2} + \varepsilon}\delta_{K}^{\ell(\frac{n-1}{2} - \frac{n+1}{q})}\left( \altlpnorm{f_{\tau_{\ell}}}{2}  \lVert f_{\tau_{\ell}'} \rVert_{2}\right)^{\frac{1}{2}}
\end{split}
\end{equation}
for any ball $B_{\la}$ of radius $\sim \la$.
We will use this to prove Lemma 5.2. Recall that we have $\tau_{\ell} , \tau_{\ell}' \subset \tau_{\ell - 1}$ and $dist(\tau_{\ell}, \tau_{\ell}') \gtrsim \la \dK{\ell}$. Write $[0, \la \delta^{-1}] = \bigsqcup_{i = 1}^{\delta^{-1}}I_i$ where $|I_i| \sim \la$ for each $1 \leq i \leq \delta^{-1}$. Then by using (\ref{extension op vs dilated schrodinger op}), then rescaling before finally using (\ref{estimate for extension op}) on $[0,2 \pi \la]^{n-1} \times I_i$ we have

\begin{equation}
\begin{split}
    \lpnorm{\big|\Sl f_{\tau_{\ell}} \Sl f_{\tau_{\ell}'}\big|^{\frac{1}{2}}}{q}{\Tn \times [0, \delta^{-1}]} &= \Big(\int_{0}^{\delta^{-1}}\int_{[0,2\pi]^{n-1}} \big|\Sl f_{\tau_{\ell}}(x,t)\Sl f_{\tau_{\ell}'}(x,t)\big|^{\frac{q}{2}}dxdt \Big)^{\frac{1}{q}} \\
    &\lesssim \Big( \int_{0}^{\delta^{-1}} \int_{[0,2\pi]^{n-1}} \big| \extensionop{}{f_{\tau_{\ell}}}(\la x, \la t)\extensionop{}{f_{\tau_{\ell}'}}(\la x,\la t)\big|^{\frac{q}{2}}dxdt\Big)^{\frac{1}{q}} \\
    &=  \la^{-\frac{n}{q}}\Big( \int_{0}^{\la \delta^{-1}} \int_{[0,2\pi \la]^{n-1}} \big| \extensionop{}{f_{\tau_{\ell}}}(x, t)\extensionop{}{f_{\tau_{\ell}'}}(x,t)\big|^{\frac{q}{2}}dxdt\Big)^{\frac{1}{q}} \\
    &= \la^{-\frac{n}{q}}\Big(\sum_{i = 1}^{\delta^{-1}} \int_{I_i} \int_{[0,2\pi\la]^{n-1}} \big| \extensionop{}{f_{\tau_{\ell}}}(x, t)\extensionop{}{f_{\tau_{\ell}'}}(x,t)\big|^{\frac{q}{2}}dxdt\Big)^{\frac{1}{q}} \\
    &= \Big( \sum_{i = 1}^{\delta^{-1}}\lpnorm{ \big|\extensionop{}{f_{\tau_{\ell}}} \extensionop{}{f_{\tau_{\ell}'}}\big|^{\frac{1}{2}}}{q}{[0,2\pi \la]^{n-1} \times I_i}^q\Big)^{\frac{1}{q}} \\
    &\lesssim_{\varepsilon} \la^{\frac{n-1}{2} - \frac{n}{q} + \varepsilon}\dK{\ell(\frac{n-1}{2} - \frac{n+1}{q})}\delta^{-\frac{1}{q}}\Big( \altlpnorm{f_{\tau_{\ell}}}{2} \lVert f_{\tau_{\ell}'}\rVert_{2}\Big)^{\frac{1}{2}}
\end{split}
\end{equation}
Thus, the proof of Lemma 5.2 is complete.
\qed

\section{Kernel Estimates}
As a warm-up, we will start by proving kernel estimates for the time-dilated Schr\"{o}dinger operator defined in (\ref{time-dilated schordinger operator}). Recall that $\beta(\xi) = \prod_{i = 1}^{d}\phi(\xi_i)$ where $\phi$ is an even function, $\phi \in C_{0}^{\infty}((-2,2))$ and $\phi \equiv 1$ on $[-1,1]$.
\begin{proposition}
    Let $\kappa > 0$ and suppose $\delta \geq \la^{-1 + \kappa}$. For any $d \geq 1$ we have
    \begin{equation}
        \Sl\big(\Sl\big)^*(x,t;y,s) = \eta(\delta t)\eta(\delta s)\big(\beta^2(D_x/\la)e^{-i\la(t-s)\laplac{\T{d}}}\big)(x,y).
    \end{equation}
    Then for $\la \gg 1$ we have
    \begin{equation}
        \big| \Sl\big(\Sl\big)^*(x,t;y,s) \big| \lesssim\la^{\frac{d}{2}}|t-s|^{-\frac{d}{2}}(1 + |t-s|)^{d}.
    \end{equation}
\end{proposition}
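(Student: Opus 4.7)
The plan is to write the kernel explicitly and then apply Poisson summation to convert the sum over the dual lattice $\mathbb{Z}^d$ into a sum of Euclidean oscillatory integrals, which I then analyze via stationary and non-stationary phase.

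First, the claimed identity for $\Sl(\Sl)^*$ follows from a direct computation: inserting the Fourier series representation of $\Sl f$ from Section~2 and taking adjoints, the kernel equals
\begin{equation*}
\Sl(\Sl)^*(x,t;y,s) = \eta(\delta t)\eta(\delta s)\sum_{k \in \mathbb{Z}^d} \beta(k/\la)^2 \, e\bigl((x-y)\cdot k + \la^{-1}(t-s)|k|^2\bigr).
\end{equation*}
Since $|\eta|\leq 1$, it suffices to bound the $k$-sum in absolute value. The sum has at most $O(\la^d)$ nonvanishing terms each of modulus at most $1$, giving the trivial estimate $|\Sl(\Sl)^*| \lesssim \la^d$. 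Comparing with the target bound $\la^{d/2}|t-s|^{-d/2}(1+|t-s|)^d$, one checks that this trivial estimate is already stronger whenever $|t-s| \leq \la^{-1}$, so the remaining task is the range $|t-s| > \la^{-1}$, where stationary phase becomes effective.

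In that regime I would apply Poisson summation to
\begin{equation*}
g(\xi) = \beta(\xi/\la)^2 \, e\bigl((x-y)\cdot \xi + \la^{-1}(t-s)|\xi|^2\bigr),
\end{equation*}
converting the exponential sum into $\sum_{m \in \mathbb{Z}^d} \widehat g(m)$. After the rescaling $\xi = \la\zeta$, each Fourier term becomes
\begin{equation*}
\widehat g(m) = \la^d \int_{\mathbb{R}^{d}} \beta(\zeta)^2 \, e\bigl(\la[(x-y-2\pi m)\cdot \zeta + (t-s)|\zeta|^2]\bigr) \, d\zeta,
\end{equation*}
an oscillatory integral with large parameter $\la$ and a nondegenerate quadratic phase whose unique critical point is $\zeta_m^* = -(x-y-2\pi m)/(2(t-s))$ with Hessian $2(t-s)\,I$. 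For those $m$ with $\zeta_m^* \in \supp{\beta^2}$, i.e.\ $|x-y-2\pi m| \lesssim |t-s|$, standard stationary phase yields $|\widehat g(m)| \lesssim \la^d (\la|t-s|)^{-d/2} = \la^{d/2}|t-s|^{-d/2}$, and the number of such lattice points is $\lesssim (1+|t-s|)^d$, producing the stated main term. For the remaining $m$ the gradient of the phase is bounded below by $|x-y-2\pi m|$ on $\supp{\beta^2}$, so repeated integration by parts gives $|\widehat g(m)| \lesssim_N \la^{d-N}|x-y-2\pi m|^{-N}$ for every $N$, and summing over these $m$ contributes a harmless $O(1)$ term.

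The main obstacle is executing the stationary phase expansion uniformly in $m$ and accurately counting the stationary lattice points; both are fairly routine but require some care so that the implicit constants depend only on $d$ and not on $\la$ or $\delta$. The hypothesis $\delta \geq \la^{-1+\kappa}$ plays no role in the pointwise kernel bound itself; it merely confines $|t-s|$ to the range $\lesssim \la^{1-\kappa}$, which keeps the growth factor $(1+|t-s|)^d$ from being wasteful in subsequent applications (such as Corollary 6.0.1, where $\chi_{\tau_{\ell-1}}(D_x)$ provides the additional factor of $\dK{\ell-1}$ inside the parenthetical).
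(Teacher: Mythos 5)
Your argument is correct, and it takes a genuinely different route from the paper's. The paper first exploits the product structure of $\beta(\xi)=\prod_i\phi(\xi_i)$ to reduce to $d=1$, then represents the one-dimensional multiplier via the Fourier--cosine transform in an auxiliary parameter $\tau$, i.e.\ $\phi^2(D_x/\la)e^{-i\la^{-1}(t-s)\Delta_{\T{}}}=\frac{1}{2\pi}\int\widehat{\varphi_\la}(\tau;t-s)\cos\tau\sqrt{-\Delta_{\T{}}}\,d\tau$, applies Poisson summation to the \emph{wave kernel}, and uses Huygens' principle (finite speed of propagation) together with the dyadic localization $a(2^{-j}\tau)$ to count the nonzero Euclidean summands before invoking stationary phase. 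You instead apply Poisson summation directly to the exponential sum over $\Z[d]$ and rescale, so the same oscillatory integral appears but the counting of relevant lattice points $m$ comes from the geometric condition $\zeta^*_m\in\supp\beta^2$, i.e.\ $|x-y-2\pi m|\lesssim|t-s|$, rather than from a wave-equation argument. This is cleaner for the square torus because it bypasses the auxiliary $\tau$-integral and the one-dimensional reduction, and it handles all $d\geq 1$ at once; the price is that the wave-kernel/Hadamard-parametrix approach in the paper is the one that transfers to general compact manifolds (and is reused verbatim in the proof of Proposition 6.2), so the paper's choice has the advantage of keeping the two kernel lemmas structurally parallel. Your observation that the hypothesis $\delta\geq\la^{-1+\kappa}$ plays no role in this pointwise bound is also consistent with the paper: the cutoffs $\eta(\delta t)\eta(\delta s)$ only enter by restricting $|t-s|\lesssim\delta^{-1}$, and that restriction is used later (together with the extra localization $\chi_Q$) to discard the growth factor, not in Proposition 6.1 itself.

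One small point worth tightening if you write this up: in the range $|t-s|>\la^{-1}$ there is an intermediate regime where the critical point $\zeta^*_m$ lies near $\partial\supp\beta^2$, so the split into ``stationary'' and ``non-stationary'' $m$ overlaps. The standard fix (a smooth partition of $\supp\beta^2$ near and away from $\zeta^*_m$, or simply the non-stationary-phase bound $|\widehat g(m)|\lesssim\la^d(1+\la|t-s|)^{-d/2}$ valid uniformly in $m$ whenever $|x-y-2\pi m|\lesssim|t-s|$) makes the counting and the pointwise bound work uniformly without needing $\zeta^*_m$ strictly interior. This is the ``care about implicit constants'' you flag and it is indeed routine, but the estimate you get is what makes the lattice count $(1+|t-s|)^d$ land cleanly.
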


\begin{proof}
    
The case for $d > 1$ follows from the $d = 1$ case. To see this just note that if $x = (x_1,...,x_d)$, $y = (y_1,...,y_d)$ and $k = (k_1,...,k_d)$ then
\begin{equation}
\begin{split}
     \Sl\big(\Sl\big)^*(x,t;y,s) &= \eta(\delta t)\eta(\delta s)\big(\beta^2(D_x/\la)e^{-i\la(t-s)\laplac{\T{d}}}\big)(x,y) \\
     &= \eta(\delta t)\eta(\delta s) \sum_{k \in \Z{d}}\beta^2(k/\la)e^{i\la^{-1}(t-s)|k|^2}e((x-y) \cdot k) \\
     &= \eta(\delta t)\eta(\delta s)\sum_{\substack{k \in \Z{d}; \\ |k_i| \leq 2\la}} \prod_{i = 1}^d\phi^2(k_i / \la)e^{i\la^{-1}(t-s)k_{i}^2}e((x_i - y_i)k_{i}) \\
     &= \eta(\delta t)\eta(\delta s)\prod_{i = 1}^{d}\sum_{\substack{k_i \in \Z{}; \\ |k_i| \leq 2 \la}}\phi^2(k_i / \la)e^{i\la^{-1}(t-s)k_{i}^2}e((x_i - y_i)k_i) \\
     &= \eta(\delta t)\eta(\delta s)\prod_{i = 1}^{d}\big(\phi^2(D_{x_i}/ \la )e^{-i\la^{-1}(t-s)\laplac{\T{}}}\big)(x_i,y_i).
\end{split}
\end{equation}

Thus, it suffices to show that if $x,y \in \T{}$ then 
\begin{equation}
    \big|\eta(\delta t) \eta(\delta s)\big(\phi^2(D_{x} / \la)e^{-i\la^{-1}(t-s)\laplac{\T{}}}\big)(x,y)\big| \lesssim \la^{\frac{1}{2}}|t-s|^{-\frac{1}{2}}(1 + |t-s|)
\end{equation}
We start by setting $\varphi_\la(r; t-s) = \phi^2(r/\la) e^{i\la^{-1}(t-s)r^2}$. Since $\varphi_{\la}$ is an even function of $r$ this implies that 
\begin{equation}
    \phi^2(D_x/\la)e^{-i\la^{-1}(t-s)\laplac{\T{}}} = \frac{1}{2\pi}\int_{-\infty}^{\infty}\widehat{\varphi_{\la}}(\tau; t-s)\cos{\tau\sqrt{-\laplac{\T{}}}}d\tau.
\end{equation}
where 
\begin{equation}
    \widehat{\varphi_{\la}}(\tau; t-s) = \int_{-\infty}^{\infty} e(-r\tau)\varphi_{\la}(r;t-s)dr.
\end{equation}
A simple integration by parts argument shows that 
\begin{equation}
    \partial^{k}_{\tau} \widehat{\varphi_{\la}}(\tau, t-s) = O(\la^{-N}(1 + |\tau|)^{-N}) \ \forall \ N \geq 0 \ \text{if} \ |t-s| \leq 1 \ \text{and} \ |\tau| \geq M
\end{equation}
where $M$ is a sufficiently large constant. A similar argument shows that 
\begin{equation}
\begin{split}
    \partial^{k}_{\tau} \widehat{\varphi_{\la}}(\tau, t-s) = O(\la^{-N}(1 + |\tau|)^{-N}) &\ \forall \ N \geq 0, \\
    &\text{if } \ 2^{j-1} \leq |t-s| \leq 2^j \ \text{and} \ |\tau| \geq M2^j,\ j = 1,2,. \ . \ . \ ,
\end{split}
\end{equation}
if $M$ is fixed large enough. The support properties of the function $\phi$ ensure that it suffices to take $M = 100$. \par
Fix an even function $a \in C_{0}^{\infty}(\mathbb{R})$ such that
\begin{equation}
    a(\tau) = 1 \text{ if } |\tau| \leq 100 \ \text{ and } \ a(\tau) = 0 \ \text{ if } \ |\tau| \geq 200.
\end{equation}
So if we set
\begin{equation}
    \tilde{S}^{\la}_{0}(x,t;y,s) = \frac{1}{2\pi}\int_{-\infty}^{\infty}a(\tau) \widehat{\varphi_{\la}}(\tau, t-s)\big(\cos\tau\sqrt{-\laplac{\T{}}}\big)(x,y)d\tau
\end{equation}
then
\begin{equation}
    \tilde{S}^{\la}_{0}(x,t;y,s) - \big(\phi^2(D_x/\la)e^{-i\la^{-1}(t-s)\laplac{\T{}}}\big)(x,y) = O(\la^{-N}) \  \forall \  N \geq 0 \ \text{ if } \ |t-s| \leq 1.
\end{equation}
Also, if 
\begin{equation}
    \tilde{S}^{\la}_{j}(x,t;y,s) = \frac{1}{2\pi}\int_{-\infty}^{\infty}a(2^{-j}\tau) \widehat{\varphi_{\la}}(\tau, t-s)\big(\cos\tau\sqrt{-\laplac{\T{}}}\big)(x,y)d\tau
\end{equation}
then 
\begin{equation}
\begin{split}
    \tilde{S}^{\la}_{j}(x,t;y,s) - \big(\phi^2(D_x/\la)e^{-i\la^{-1}(t-s)\laplac{\T{}}}\big)(x,y) &= O(\la^{-N}) \  \forall \  N \geq 0 \\
    &\text{if } \ 2^{j-1} \leq |t-s| \leq 2^j, \ j = 1,2, ....
\end{split}
\end{equation}
Thus, we would be done if we should show that 
\begin{equation} \label{estimate for 0 term}
    |\tilde{S}^{\la}_{0}(x,t;y,s)| \lesssim \la^{\frac{1}{2}}|t-s|^{-\frac{1}{2}} \text{ if } |t-s| \leq 1
\end{equation}
and 
\begin{equation} \label{estimate for j term}
    |\tilde{S}^{\la}_{j}(x,t;y,s)| \lesssim \la^{\frac{1}{2}}|t-s|^{-\frac{1}{2}}2^{j} \ \text{ if } \ |t-s| \in [2^{j-1}, 2^j].
\end{equation}
\par
Recall that the Poisson summation formula implies
\begin{equation} \label{Poisson summation for cos}
    \big(\cos\tau\sqrt{-\laplac{\T{}}}\big)(x,y)= \sum_{m \in \Z{}}\big(\cos\tau\sqrt{-\laplac{\R{}}}\big)(x-(y + m)).
\end{equation}
Thus, if we set 
\begin{equation}
\begin{split}
    K^{\la}_{0}(x,t;y,s) &= \frac{1}{2\pi}\int_{-\infty}^{\infty}a(\tau) \widehat{\varphi_{\la}}(\tau, t-s)\big(\cos\tau\sqrt{-\laplac{\R{}}}\big)(x,y)d\tau \\
    &= \frac{1}{(2\pi)^{2}}\int_{-\infty}^{\infty}\int_{-\infty}^{\infty} a(\tau) \widehat{\varphi_{\la}}(\tau, t-s)\cos(\tau|\xi|)e((x-y)\xi)d\tau d\xi
    \end{split}
\end{equation}
then it follows from (\ref{Poisson summation for cos}) that
\begin{equation} \label{Poisson summation formula for 0 term}
    \tilde{S}^{\la}_{0}(x,t;y,s) = \sum_{m \in \Z{}}K^{\la}_{0}(x,t;y+m,s).
\end{equation}
Also, 
\begin{multline}
    K^{\la}_{0}(x,t;y,s) = \frac{1}{(2\pi)^2}\iint a(\tau) \widehat{\varphi_{\la}}(\tau, t-s)\cos(\tau|\xi|)e( (x-y) \xi)d\tau d\xi \\
    = \frac{1}{2\pi}\int \phi^2(\xi/\la)e(\la^{-1}(t-s)|\xi|^2 + (x-y)\xi)d \xi + O(\la^{-N}).
\end{multline}
A standard stationary phase argument (see Chapter 1 of \cite{SoggeFourierIntegrals}) implies that 
\begin{equation} \label{stationary phase estimate for standard kernel}
    \int \phi^2(\xi/\la)e(\la^{-1}(t-s)|\xi|^2 +  (x-y) \xi)d \xi = O\big(\la^{\frac{1}{2}}|t-s|^{-\frac{1}{2}}\big).
\end{equation}
Also, Huygen's Principle and the support properties of $a$ imply that there is some constant $C_1$ such that $K^{\la}_{0}(x,t;y,s) = 0$ if $|x-y| \geq C_1$. Thus, the number of nonzero summands in (\ref{Poisson summation formula for 0 term}) is $O(1)$. Combining these two facts together yields (\ref{estimate for 0 term}).
\par
If we set 
\begin{equation}
    K^{\la}_{j}(x,t;y,s) = \frac{1}{2\pi}\int_{-\infty}^{\infty}a(2^{-j}\tau) \widehat{\varphi_{\la}}(\tau, t-s)\cos\tau\sqrt{-\laplac{\R{}}}(x,y)d\tau
\end{equation}
then it follows from (\ref{Poisson summation for cos}) that 
\begin{equation} \label{Poisson summation formula for j term}
    \tilde{S}^{\la}_{j}(x,t;y,s) = \sum_{m \in \Z{}}K^{\la}_{j}(x,t;y+m,s).
\end{equation}
When $|t-s| \in [2^{j-1}, 2^j]$ we also have
\begin{multline}
    K^{\la}_{j}(x,t;y,s) = \frac{1}{(2\pi)^2}\iint a(2^{-j}\tau) \widehat{\varphi_{\la}}(\tau, t-s)\cos(\tau|\xi|)e(( x-y)\xi)d\tau d\xi \\
    = \frac{1}{2\pi}\int \phi^2(\xi/\la)e(\la^{-1}(t-s)|\xi|^2 + ( x-y)\xi)d \xi + O(\la^{-N}).
\end{multline}
Huygen's principle and the support properties of $a$ imply that $K^{\la}_{j}(x,t;y,s) =0$ if $|x - y| \geq 2^j C_1$ so the number of nonzero summands in (\ref{Poisson summation formula for j term}) is $O(2^{j})$. Combining this with (\ref{stationary phase estimate for standard kernel}) yields (\ref{estimate for j term}).
\end{proof}

We now turn to proving the kernel estimates for our time-dilated Schr\"{o}dinger operators with additional frequency localization. Fix $m \in \mathbb{Z}$ with $1 \leq m \leq K$. Let $I = [c - \la \dK{m}, c + \la \dK{m}] \subset [-\la, \la]$ and let $\chi \in C_{0}^{\infty}((-2,2))$ such that $\chi \equiv 1$ on $[-1,1]$. Set $\chi_{I}(\xi) =\chi((\la \dK{m})^{-1}(\xi - c))$. Then $\supp\chi_{I} \subset 2I$ and $\chi_{I} \equiv 1$ on $I$. Define the following Fourier multiplier on $\T{}$:
\begin{equation}
    \chi_{I}(D_x)g(x) = \sum_{j \in \Z{}}\widehat{g}(j)\chi_{I}(j)e(x j), \ x \in \T{}.
\end{equation}
To prove the kernel estimates we need, it suffices to prove the following:

\begin{proposition}
Let $\kappa > 0$ and suppose $\delta \geq \la^{-1 + \kappa}$. Then for $x,y \in \T{}$ we have
\begin{equation} 
     \big|\eta(\delta t)\eta(\delta s)\big( \chi_{I}^{2}(D_x)\phi^2(D_x / \la)e^{-i\la^{-1}(t-s)\laplac{\T{}}} \big)(x,y)\big| \lesssim \la^{\frac{1}{2}}|t-s|^{-\frac{1}{2}}\big(1 + \dK{m}|t-s|\big).
\end{equation} 
Thus, when $m = K$ (so that $\dK{m} = \delta$), if $|t-s| \leq 2\delta^{-1}$ we have 
\begin{equation}
    \big|\eta(\delta t)\eta(\delta s)\big( \chi_{I}^{2}(D)\phi^2(D / \la)e^{-i\la^{-1}(t-s)\laplac{\T{}}} \big)(x,y) \big| \lesssim \la^{\frac{1}{2}}|t-s|^{-\frac{1}{2}}.
\end{equation}
\end{proposition}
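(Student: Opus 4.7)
The plan is to parallel the proof of Proposition 6.1, using the narrow frequency support of $\chi_I^2$ (an interval of length $\sim \la\dK{m}$) to tighten the spatial support of the Euclidean kernel after Poisson summation, so that the factor $(1+|t-s|)$ from Proposition 6.1 gets replaced by $(1+\dK{m}|t-s|)$. The $\eta$-factors only enforce $|t-s| \leq 2\delta^{-1}$ and are otherwise bounded by $1$; the task reduces to estimating the torus kernel of $\chi_I^2(D_x)\phi^2(D_x/\la)e^{-i\la^{-1}(t-s)\laplac{\T{}}}$.

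Since $\chi_I^2$ need not be even in $\xi$, instead of going through the $\cos(\tau\sqrt{-\Delta})$ decomposition I would apply Poisson summation directly to write
\begin{equation*}
    \big(\chi_I^2(D_x)\phi^2(D_x/\la)e^{-i\la^{-1}(t-s)\laplac{\T{}}}\big)(x,y) = \sum_{m \in \Z{}} K^\la(x-y-m, t-s),
\end{equation*}
with
\begin{equation*}
    K^\la(z, t-s) = \frac{1}{2\pi}\int_{\R{}} \chi_I^2(\xi)\phi^2(\xi/\la)\, e^{i(\la^{-1}(t-s)\xi^2 + z\xi)}\, d\xi.
\end{equation*}
Performing the change of variables $\xi = c + \la\dK{m}\eta$ turns $\chi_I^2$ into a fixed bump supported in $\eta \in [-2,2]$ with Jacobian $\la\dK{m}$, and the $\eta$-dependent phase becomes $A\eta^2 + B\eta$ with $A = \la\dK{2m}(t-s)$ and $B = \la\dK{m}(z + 2c(t-s)/\la)$. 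Standard stationary phase on the $\eta$-integral gives a bound of $\min(1,|A|^{-1/2})$, which combined with the Jacobian yields $|K^\la(z,t-s)| \lesssim \la^{1/2}|t-s|^{-1/2}$ on the essential support $|\eta_*| = |B|/(2|A|) \lesssim 1$, equivalently $|z + 2c(t-s)/\la| \lesssim \dK{m}|t-s|$. Outside this support, repeated integration by parts in $\eta$, exploiting $|\partial_\eta(A\eta^2 + B\eta)| \gtrsim |B|$ on $[-2,2]$, gives decay $(1+|B|)^{-N}$ for any $N$; the hypothesis $\delta \geq \la^{-1+\kappa}$ ensures $\la\dK{m} \geq \la^\kappa$ is large, so $|B|$ is comfortably large whenever $|z + 2c(t-s)/\la|$ is not tiny.

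Summing over $m \in \Z{}$, the lattice of points $z_m = x-y-m$ contains $O(1+\dK{m}|t-s|)$ elements for which $|z_m + 2c(t-s)/\la|$ lies in an interval of length $\dK{m}|t-s|$, each contributing $O(\la^{1/2}|t-s|^{-1/2})$. The main technical obstacle I anticipate is controlling the tails from the dyadic shells $|z_m + 2c(t-s)/\la| \sim 2^j\dK{m}|t-s|$: one must choose $N$ large enough (depending only on the dimension) so that the integration-by-parts decay $(1+|B|)^{-N}$ outpaces the shell multiplicity $O(1+2^j\dK{m}|t-s|)$, producing a geometrically summable series whose total is absorbed into the main term. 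The second conclusion of Proposition 6.2 is then immediate: when $m = K$ one has $\dK{m} = \delta$, and the constraint $|t-s| \leq 2\delta^{-1}$ forces $1 + \dK{m}|t-s| = O(1)$.
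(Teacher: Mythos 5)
Your proposal is correct and arrives at the same estimate by the same broad strategy — Poisson summation over $\Z{}$, stationary phase for the pointwise bound, and counting the $O(1+\dK{m}|t-s|)$ lattice translates whose stationary point lands in the support of the symbol — but the mechanics are genuinely reorganized. The paper first proves Proposition 6.1 via the $\cos(\tau\sqrt{-\Delta})$ representation and Huygens' principle (which gives a \emph{sharp} truncation of the lattice sum at scale $|t-s|$), uses the $L^1$ bound on $\check{\chi}_I$ to dispose of $|t-s|\le 1$ by convolution, and only then integrates by parts in $\xi$ to shrink the surviving range of $j$ down to an interval of length $\sim \dK{m}|t-s|$. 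You instead apply Poisson summation directly to the full symbol $\chi_I^2(\xi)\phi^2(\xi/\la)$ and normalize by the change of variables $\xi = c + \la\dK{m}\eta$, so that the essential support of the lattice sum and the stationary-phase bound both drop out of the coefficients $A = \la\dK{2m}(t-s)$, $B = \la\dK{m}\big(z + 2c(t-s)/\la\big)$ in one step. This is cleaner and more self-contained (no Huygens' principle, no appeal to Proposition 6.1), at the cost of having to control rapidly decaying tails rather than an exactly finite sum — a trade-off you correctly flag as the main technical point. For the record that step does close: on the dyadic shell $|B|\sim 2^j|A|$ the integrand loses $|B|^{-N}$ while the shell multiplicity grows only like $2^j\dK{m}|t-s|$, so any $N\ge 2$ suffices, and the hypothesis $\delta\ge\la^{-1+\kappa}$ (hence $\la\dK{m}\ge\la^{\kappa}$) keeps $|B|\gtrsim\la^{\kappa}$ off at most one exceptional lattice point. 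One small caveat worth making explicit in a writeup: when $|A|\lesssim 1$, the stationary point is nowhere near localizing and the $|B|^{-N}$ decay can be vacuous near that exceptional lattice point, but then the trivial bound $|K^\la|\lesssim\la\dK{m}$ already satisfies $\la\dK{m}\lesssim\la^{1/2}|t-s|^{-1/2}$ precisely because $|A| = \la\dK{2m}|t-s|\lesssim 1$, so this regime is harmless; it should be separated out rather than folded silently into the stationary-phase dichotomy.
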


\begin{proof}
First, fix $x,y \in \T{}$ and note that $\partial_{\xi}^{k}\chi_{I}(\xi) = O\big( (\la \dK{m})^{-k}\big)$. Also, this function is supported in an interval of length $\sim \la \dK{m}$ so  $\check{\chi}_{I}(x)$ =  $O\big( (\la \dK{m})(1+\la \dK{m}|x|)^{-N}\big)$ for any $N \geq 0$. It then follows from the Poisson Summation formula that the kernel of our operator satisfies
$\chi_I(D_x)(x,y) = O\Big(\la \dK{m}\big(1 + \la \dK{m}|x- y|\big)^{-N}\Big)$ for any $N \geq 0$. It follows that
\begin{equation} \label{kernel bounds for angular cutoff multipliers}
    \int_{\mathbb{R}}|\check{\chi}_{I}(z)|dz, \ \sup_{y \in \mathbb{T}}\int_{\mathbb{T}} |\chi_{I}(D_x)(x,y)|dx , \ \sup_{x \in \mathbb{T}}\int_{\mathbb{T}} |\chi_{I}(D_x)(x,y)|dy \leq C.
\end{equation}
Thus, it follows from Proposition 6.1 that we obtain the desired bounds in Proposition 6.2 if $|t-s| \leq 1$.
\par
Also, (\ref{kernel bounds for angular cutoff multipliers}) implies that if we let 
\begin{equation}
    K^{\la}_{I}(x,t;y,s) = \eta(\delta t)\eta(\delta s)\big(\chi_{I}(D_x)\phi^{2}(D_x / \la)e^{-i\la^{-1}(t-s)\laplac{\T{}}}\big)(x,y)
\end{equation}
then it suffices to see that this kernel satisfies the desired bounds if $|t-s| \geq 1$. Also, recall that
\begin{equation} 
    \chi_I(\xi) = \chi\big((\la \dK{m})^{-1}(\xi - c)\big)
\end{equation}
and
\begin{equation} \label{support property of chi_I}
    \supp{\chi_I} \subset 2I = [c -2\la\dK{m}, c+2\la \dK{m} ]
\end{equation}
\par
If we let $K^{\la}(x,t;y,s)$ be the kernel of the multiplier operator $\eta(\delta t)\eta(\delta s)\phi^2(D_x/\la)e^{-i\la^{-1}(t-s)\laplac{\T{}}}$ then using the Poisson Summation formula and writing $\chi_{I}$ in terms of its inverse Fourier transform we have
\begin{equation} \label{K^(la, theta) Poisson summation 1}
\begin{split}
    K^{\la}_{I} (x,t;y,s) &= \eta(\delta t) \eta(\delta s)\frac{1}{2 \pi} \sum_{j \in \Z{}}\int_{\mathbb{R}} \chi_{I}(\xi)\phi^2(\xi/\la)e((x-y-j)\xi + \la^{-1}(t-s)|\xi|^2)d\xi\\
    &= \eta(\delta t) \eta(\delta s) \frac{1}{2\pi}\int_{\mathbb{R}} \check{\chi}_{I} (x - z)K^{\la}(z,t; y, s) dz.
\end{split}
\end{equation}
Further, if we repeat the arguments from the proof of Proposition 6.1 then we see that for $|t-s| \in [2^{j-1}, 2^j]$ we have
\begin{equation} \label{K^la poisson summation}
    K^{\la}(z,t; y, s) = \sum_{j \in \Z{}}\tilde{K}^{\la}(z,t;y+j,s) + O(\la^{-N})
\end{equation}
where
\begin{equation} \label{tilde K^la definition} 
    \tilde{K}^{\la}(z,t; y, t) = \frac{1}{2\pi} \int_{\mathbb{R}} \phi^2(\xi/\la)e(\la^{-1}(t-s)|\xi|^2 + (z-y) \xi)d\xi.
\end{equation}
As before, the sum in (\ref{K^la poisson summation}) is over the $j \in \Z{}$ such that $|z - (y + j)| \lesssim |t-s|$ due to Huygen's principle (in particular, the sum is finite). It follows that 
\begin{equation} \label{K^(la, theta) Poisson summation 2}
    K^{\la}_{I} (x,t;y,s) = \eta(\delta t) \eta(\delta s) \sum_{j \in \Z{}}\int_{\mathbb{R}} \check{\chi_{I}} \ (x - z)\tilde{K}^{\la}(z,t; y+ j, s) dz + O\big(\la^{-N}\big)
\end{equation}
We recall the estimate (\ref{stationary phase estimate for standard kernel}), which says that 
\begin{equation} \label{stationary phase bound for K^la}
    \big| \tilde{K}^{\la}(z,t;y,s)\big| \lesssim \la^{\frac{1}{2}}|t-s|^{-\frac{1}{2}}.
\end{equation}
It follows from (\ref{kernel bounds for angular cutoff multipliers}) and (\ref{stationary phase bound for K^la}) that for each fixed $j$ we have
\begin{equation}
    \Big|\int_{\mathbb{R}} \check{\chi}_{I} \ (x - z)\tilde{K}^{\la}(z,t; y+ j, s) dz\Big| \lesssim \la^{\frac{1}{2}}|t-s|^{-\frac{1}{2}}
\end{equation}
Thus, in order to finish the proof of Proposition 6.2 it suffices to see that that number of non-negligible summands in (\ref{K^(la, theta) Poisson summation 2}) is $O\big(\dK{m}|t-s|\big)$.
\par
Note that 
\begin{equation}
    \int_{\mathbb{R}} \check{\chi}_{I} \ (x - z)\tilde{K}^{\la}(z,t; y+ j, s) dz = \int_{\mathbb{R}} \chi_{I}(\xi)\phi^2(\xi/\la)e(( x-y-j) \xi + \la^{-1}(t-s)|\xi|^2)d\xi.
\end{equation}
Recalling (\ref{support property of chi_I}) we can integrate by parts to see that this integral is $O(\la^{-N})$ for all $N \geq 0$ if $j$ is not contained in the set
\begin{equation}
    \{ j \in \Z{} \ : \ |x- y - j|\in \big[2|t-s|(\la^{-1}c - C_0\dK{m}), 2|t-s|(\la^{-1}c + C_0\dK{m})\big] \}
\end{equation}  
for fixed $x,y,t$ and $s$, where $C_0$ is a large fixed constant. This is an interval of length $\sim \dK{m}|t-s|$ so we have that
\begin{equation}
    \#\{ j \in \Z{} \ : \ |x- y - j|\in \big[2|t-s|(\la^{-1}c - C\dK{m}), 2|t-s|(\la^{-1}c + C\dK{m})\big] \} = O\big(\dK{m}|t-s|\big).
\end{equation} Thus, the proof of Proposition 6.2 is complete.
\end{proof}

Proposition 6.2 immediately implies the analogous $d$-dimensional estimates that we require. Let $\tau = \prod_{i = 1}^d I_i$ be a cube of side-length $2\la \dK{m}$ centered at $c = (c_1,...,c_d)$ contained in $[-\la, \la]^{d}$ with $I_i = [c_i - \la \dK{m}, c_i + \la \dK{m}]\subset [-\la, \la]$. For $\xi = (\xi_1,...,\xi_d)$ let
\begin{equation}
    \chi_{\tau}(\xi) = \prod_{i=1}^{d}\chi_{I_i}(\xi_i)
\end{equation}
Note that 
\begin{equation}
    \chi_{\tau} \equiv 1 \ \text{on} \ \tau \ \text{and} \ \text{supp}\chi_{\tau} \subset 2\tau.
\end{equation}
As in the one-dimensional case, define the Fourier multiplier on $\T{d}$
\begin{equation}
    \chi_{\tau}(D_x)g(x) = \sum_{k \in \Z{d}}\widehat{g}(k)\chi_{\tau}(k)e(x \cdot k), \ x \in \T{d}.
\end{equation}
Then, using a similar argument to the one used to reduce the $d$-dimensional estimate in Proposition 6.1 to its $1$-dimensional analog, Proposition 6.2 implies the following:
\begin{corollary}
    Let $\kappa > 0$ and suppose $\delta \geq \la^{-1 + \kappa}$. If $x,y \in \T{d}$ then
    \begin{equation}
        \big|\eta(\delta t)\eta(\delta s)\big( \chi_{\tau}^{2}(D_x)\beta^2(D_x / \la)e^{-i\la^{-1}(t-s)\laplac{\T{d}}} \big)(x,y)\big| \lesssim \la^{\frac{d}{2}}|t-s|^{-\frac{d}{2}}\big(1 + \dK{m}|t-s|\big)^d.
    \end{equation}

Thus, when $m = K$ (so that $\dK{m} = \delta$), since $|t-s| \leq 2\delta^{-1}$ we have 
\begin{equation}
    \big|\eta(\delta t)\eta(\delta s)\big( \chi_{\tau}^{2}(D_x)\beta^2(D_x / \la)e^{-i\la^{-1}(t-s)\laplac{\T{d}}} \big)(x,y) \big| \lesssim \la^{\frac{d}{2}}|t-s|^{-\frac{d}{2}}.
\end{equation}
\end{corollary}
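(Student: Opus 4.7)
The plan is to mirror the tensorization argument used at the opening of the proof of Proposition 5.1, reducing the $d$-dimensional statement to the one-dimensional kernel bound in Proposition 5.2. The symbols split as products, $\chi_Q(\xi) = \prod_{i=1}^d \chi_{I_i}(\xi_i)$ and $\beta(\xi) = \prod_{i=1}^d \phi(\xi_i)$, and since $|j|^2 = \sum_{i=1}^d j_i^2$ we have $e^{-i\la^{-1}(t-s)\laplac{\T{d}}} = \prod_{i=1}^d e^{-i\la^{-1}(t-s)\Delta_{x_i}}$. Writing the kernel as a Fourier series over $\Z{d}$ and factoring, we obtain
\begin{equation*}
\big(\chi_Q^2(D_x)\beta^2(D_x/\la)e^{-i\la^{-1}(t-s)\laplac{\T{d}}}\big)(x,y) = \prod_{i=1}^d \big(\chi_{I_i}^2(D_{x_i})\phi^2(D_{x_i}/\la)e^{-i\la^{-1}(t-s)\Delta_{x_i}}\big)(x_i,y_i).
\end{equation*}

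Next, I would observe that the time cutoffs $\eta(\delta t)\eta(\delta s)$ are scalar with respect to the spatial variables, satisfy $|\eta| \leq 1$, and can be inserted in front of each coordinate factor at the cost of introducing harmless extra copies (since $|\eta(\delta t)\eta(\delta s)|^d \leq |\eta(\delta t)\eta(\delta s)|$). After this insertion, each of the $d$ factors on the right-hand side is exactly the one-dimensional object estimated by Proposition 5.2, yielding
\begin{equation*}
\big|\eta(\delta t)\eta(\delta s)\big(\chi_{I_i}^2(D_{x_i})\phi^2(D_{x_i}/\la)e^{-i\la^{-1}(t-s)\Delta_{x_i}}\big)(x_i,y_i)\big| \lesssim \la^{1/2}|t-s|^{-1/2}(1+\dK{m}|t-s|)
\end{equation*}
for each $1 \leq i \leq d$, provided $\delta \geq \la^{-1+\kappa}$. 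Multiplying these $d$ bounds produces the claimed estimate $\la^{d/2}|t-s|^{-d/2}(1+\dK{m}|t-s|)^d$.

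For the specialized statement at scale $m = K$, I would simply note that $\dK{K} = \delta$ by definition, and the support of $\eta(\delta \cdot)$ forces $|t|, |s| \leq \delta^{-1}$, so $|t-s| \leq 2\delta^{-1}$ and the factor $(1 + \delta|t-s|)^d$ is bounded by an absolute constant; this absorbs into the implicit constant and leaves the cleaner bound $\la^{d/2}|t-s|^{-d/2}$. There is essentially no analytic obstacle here beyond what is already in Proposition 5.2; the only subtle point is the bookkeeping of the time cutoffs, which must be handled so that only one copy of $\eta(\delta t)\eta(\delta s)$ appears in the final estimate, but this is immediate from $|\eta| \le 1$.
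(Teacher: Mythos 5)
Your proposal follows the same tensorization route the paper invokes (it explicitly refers to ``a similar argument to the one used to reduce the $d$-dimensional estimate in Proposition 5.1 to its $1$-dimensional analog''), and the reduction of the $d$-dimensional kernel to a product of one-dimensional kernels, as well as the specialization at $m = K$, are carried out correctly. However, there is a directional slip in how you manage the time cutoffs. You argue that you can insert extra copies of $\eta(\delta t)\eta(\delta s)$ in front of each coordinate factor because $|\eta(\delta t)\eta(\delta s)|^d \leq |\eta(\delta t)\eta(\delta s)|$. This inequality is true but points the wrong way for your purpose: if Proposition 5.2 supplies $|\eta(\delta t)\eta(\delta s)\, K_i| \lesssim B_i$ for each coordinate kernel $K_i$, multiplying yields a bound on $|\eta(\delta t)\eta(\delta s)|^d \prod_i |K_i|$, which is \emph{at most} $|\eta(\delta t)\eta(\delta s)| \prod_i |K_i|$ (the quantity you actually need to control), not at least. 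Outside the region where $\eta(\delta t)\eta(\delta s) = 1$ the ratio $|\eta(\delta t)\eta(\delta s)|^{1-d}$ can be arbitrarily large, so the naive product of the $d$ one-dimensional estimates does not close the argument as written.

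The fix is straightforward and is what the paper's reduction in Proposition 5.1 implicitly uses: the proof of Proposition 5.2 in fact establishes the cutoff-free bound
\begin{equation*}
  \big|\big(\chi_{I}^{2}(D_x)\phi^2(D_x/\la)e^{-i\la^{-1}(t-s)\laplac{\T{}}}\big)(x,y)\big| \lesssim \la^{\frac{1}{2}}|t-s|^{-\frac{1}{2}}\big(1 + \dK{m}|t-s|\big)
\end{equation*}
for all $(t,s)$ with $|t|, |s| \lesssim \delta^{-1}$; the factors $\eta(\delta t)\eta(\delta s)$ enter only through $|\eta| \leq 1$ and through restricting the time domain. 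Keeping the single copy of $\eta(\delta t)\eta(\delta s)$ out front, bounding it by $1$, and applying the cutoff-free one-dimensional bound to each of the $d$ coordinate factors then yields the corollary directly. With that correction your argument matches the paper's intended proof; the $m = K$ specialization is exactly as you describe.
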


\section{Proof of Theorem 1.1}
We now turn to the Proof of Theorem 1.1. As mentioned earlier, the proof follows from applying Proposition 3.1 for $1 \leq \ell < K$. We also need an estimate when $\ell = K$. Note that when $\ell = K$ we have $\dK{K} = \delta$. Let $\tau \subset Q_{\la}$ be a cube of side-length $2\la \delta$. Let $\chi_{\tau}$ be defined as in Section 6. Recall that 
\begin{equation}
    \Sl \chi_{\tau}(D_x)f(x, t) = S_{\la, t} \chi_{\tau}(D_x)f(x). 
\end{equation}
Further, since
\begin{equation}
    S_{\la, t} \chi_{\tau}(D_x) \big(S_{\la, s} \chi_{\tau}(D_x)\big)^{*}(x,y) = \eta(\delta t)\eta(\delta s)\big(\chi_{\tau}^2(D_x)\beta^{2}(D_x / \la)e^{-i \la^{-1}(t-s)\laplac{\Tn}}\big)(x,y)
\end{equation}
we can use the kernel estimates in Corollary 6.0.1 to conclude that
\begin{equation}
    \lpnorm{S_{\la, t}\chi_{\tau}(D_x) \big( S_{\la,s} \chi_{\tau}(D_x)\big)^{*} f}{\infty}{\Tn} \lesssim\la^{\frac{n-1}{2}}|t-s|^{-\frac{n-1}{2}}\lpnorm{f}{1}{\Tn}
\end{equation}
and by Parseval's Theorem
\begin{equation}
    \lpnorm{S_{\la, t}\chi_{\tau}(D_x)f}{2}{\Tn} \lesssim \lpnorm{f}{2}{\Tn}.
\end{equation}
Then we can use a $TT^{*}$ argument such as in Corollary 0.3.7 in \cite{SoggeFourierIntegrals} to conclude that
\begin{equation} \label{K scale estimate}
     \lpnorm{\Sl \chi_{\tau}(D_x)f}{q_c}{\Tn \times [0, \delta^{-1}]} \lesssim \la^{\frac{1}{q_c}}\altlpnorm{f}{2}.
\end{equation}

Now, suppose $\tau_{K-1} \subset Q_{\la}$ is a cube of side-length $2\la \dK{K-1}$. Then by Proposition 3.1 we have
\begin{equation} \label{broad-narrow at scale K}
\begin{split}
    \lpnorm{\Sl f_{\tau_{K-1}}}{q_c}{\Tn\times[0, \delta^{-1}]} \leq \big(C_1 \la^{\frac{1}{q_c}} + C_{\varepsilon}\la^{\frac{n-1}{n}\frac{1}{q_c} + \varepsilon}\dK{-C}\delta^{-\frac{1}{q_c}}\big)\altlpnorm{f_{\tau_{k-1}}}{2} \\
    + C_2\Big(\sum_{\tau_{K} \subset \tau_{K-1}}\lpnorm{\Sl f_{\tau_{K}}}{q_c}{\Tn \times [0, \delta^{-1}]}^{2}\Big)^{\frac{1}{2}}
\end{split}
\end{equation}
\noindent
By (\ref{K scale estimate}) we have
\begin{equation}\label{smallest scale Strichartz}
    \lpnorm{\Sl f_{\tau_{K}}}{q_c}{\Tn \times [0, \delta^{-1}]} = \lpnorm{\Sl \chi_{\tau_{K}}(D_x)f_{\tau_{K}}}{q_c}{\Tn \times [0, \delta^{-1}]}   \lesssim \la^{\frac{1}{q_c}}\altlpnorm{f_{\tau_{K}}}{2}
\end{equation}
Combining this with (\ref{broad-narrow at scale K}) and using the orthogonality of the functions $\{ f_{\tau_{K}}\}_{\tau_K \subset \tau_{K-1}}$ we have
\begin{equation}
    \lpnorm{\Sl f_{\tau_{K-1}}}{q_c}{\Tn\times[0, \delta^{-1}]} \lesssim_{\varepsilon}\big( \la^{\frac{1}{q_c}} + \la^{\frac{n-1}{n}\frac{1}{q_c} + \varepsilon}\delta^{-\frac{n+1}{n}\frac{1}{q_c}}\big)\altlpnorm{f_{\tau_{K-1}}}{2}.
\end{equation}

Now, suppose we know that 
\begin{equation} \label{scale l estimate}
    \lpnorm{\Sl f_{\tau_{\ell}}}{q_c}{\Tn\times[0, \delta^{-1}]} \leq C_{\varepsilon}\big( \la^{\frac{1}{q_c}} + \la^{\frac{n-1}{n}\frac{1}{q_c} + \varepsilon}\dK{-C}\delta^{-\frac{n+1}{n}\frac{1}{q_c}}\big)\altlpnorm{f_{\tau_{\ell}}}{2}
\end{equation}
for some $\ell$ with $1 \leq \ell \leq K$ and any cube $\tau_{\ell} \subset Q_{\la}$ of side-length $2\la \dK{\ell}$. Fix a cube $\tau_{\ell - 1} \subset Q_{\la}$. Using Proposition 3.1, (\ref{scale l estimate}) and the orthogonality of the functions $\{f_{\tau_{\ell}}\}_{\tau_{\ell}\subset \tau_{\ell - 1}}$ we have
\begin{equation}
\begin{split}
    \lpnorm{\Sl f_{\tau_{\ell-1}}}{q_c}{\Tn\times[0, \delta^{-1}]} \leq \big(C_1 \la^{\frac{1}{q_c}} + C_{\varepsilon}\la^{\frac{n-1}{n}\frac{1}{q_c} + \varepsilon}\dK{-C}\delta^{-\frac{n+1}{n}\frac{1}{q_c}}\big)\altlpnorm{f_{\tau_{\ell-1}}}{2} \\
    + C_2\Big(\sum_{\tau_{\ell} \subset \tau_{\ell-1}}\lpnorm{\Sl f_{\tau_{\ell}}}{q_c}{\Tn \times [0, \delta^{-1}]}^{2}\Big)^{\frac{1}{2}} \\
    \lesssim_{\varepsilon}\big( \la^{\frac{1}{q_c}} +\la^{\frac{n-1}{n}\frac{1}{q_c} + \varepsilon}\dK{-C}\delta^{-\frac{n+1}{n}\frac{1}{q_c}}\big)\altlpnorm{f_{\tau_{\ell-1}}}{2}
\end{split}
\end{equation}
Iterating this argument (starting with $\ell = K$ and working back up until $\ell = 1$ so that $\tau_{\ell - 1} = \tau_{0} = Q_{\la}$) yields that for any function $ f(x) = \sum_{k \in \tilde{Q_{\la}}}\widehat{f}(k)e(x \cdot k)$ we have
\begin{equation}
    \lpnorm{\Sl f}{q_c}{\Tn \times [0, \delta^{-1}]} \leq C_{\varepsilon, K}\big( \la^{\frac{1}{q_c}} +\la^{\frac{n-1}{n}\frac{1}{q_c} + \varepsilon}\dK{-C}\delta^{-\frac{n+1}{n}\frac{1}{q_c}}\big)\altlpnorm{f}{2}.
\end{equation}
Note that the constant $C_{\varepsilon, K}$ depends on $\varepsilon$ and $K$ (in particular, the constant depends on the  number of times we iterate the above argument, which is exactly $K$). Now, choose $K$ large enough so that $\frac{C}{K} < \varepsilon$. Then $\dK{-C} = \delta^{-\frac{C}{K}} < \delta^{-\varepsilon} < \la^{\varepsilon}$. Further, note that 
\begin{equation}
    \la^{\frac{n-1}{n}\frac{1}{q_c} + \varepsilon}\dK{-C}\delta^{-\frac{n+1}{n}\frac{1}{q_c}} \leq \la^{\frac{n-1}{n}\frac{1}{q_c} + 2\varepsilon}\delta^{-\frac{n+1}{n}\frac{1}{q_c}} \leq \la^{\frac{1}{q_c}}, \ \text{if} \ \delta \geq \la^{-\frac{1}{n+1} + \frac{4n}{n-1}\varepsilon}.  
\end{equation}

Thus, for any $\varepsilon > 0$, if $\delta \geq \la^{-\frac{1}{n+1} + \varepsilon}$ then 
\begin{equation}
    \lpnorm{\Sl f}{q_c}{\Tn \times [0, \delta^{-1}]} \lesssim_{\varepsilon} \la^{\frac{1}{q_c}}\altlpnorm{f}{2}
\end{equation}
and by rescaling this implies
\begin{equation}
    \lpnorm{e^{-it \laplac{\Tn}}\beta(D_x/\la)f}{q_c}{\Tn \times [0, \frac{1}{\la \delta}]} \lesssim_{\varepsilon} \altlpnorm{f}{2}
\end{equation}
so the proof of Theorem 1.1 on the square torus is complete.
\qed

\section{Endpoint Estimate on the Square Torus}
We now show how to adjust the argument given in Sections 3-7 to prove Theorem 1.2 on the square torus $\Tn$. As in the proof of Theorem 1.1, by rescaling, it suffices to show that for any $\varepsilon > 0$ we have
\begin{equation}
    \mixlpnorm{\Sl f}{2}{q_e}{\Tn\times[0,\delta^{-1}]} \lesssim_{\varepsilon} \la^{\frac{1}{2}}\lpnorm{f}{2}{\Tn}
\end{equation}
if $\delta \geq \la^{-\frac{n-3}{(n-1)(n+3)}+ \varepsilon}$.
Recall that $q_e = \frac{2d}{d-2} = \frac{2(n-1)}{n-3}$. The proof of Theorem 1.2 follows the same scheme as the proof of Theorem 1.1. We use a multi-scale analysis, bilinear restriction estimates and height-splitting at each scale with $1 \leq \ell < K$. We start by proving the analogs of Propositions 3.2 and 3.3. Fix a cube $\tau_{\ell - 1} \subset Q_{\la}$ of sidelength $\la\dK{\ell}$. As before, we may assume that $\altlpnorm{f_{\tau_{\ell - 1}}}{2} = 1$.

\subsection{Large Height Estimate at Scale $\dK{\ell-1}$}
We start by proving the analog of Proposition 3.2:
\begin{equation} \label{large height endpoint estimate}
    \mixlpnorm{\Sl f_{\tau_{\ell - 1}}}{2}{q_e}{A^{+}_{\tau_{\ell - 1}}} \lesssim \la^{\frac{1}{2}}\altlpnorm{f_{\tau_{\ell - 1}}}{2}.
\end{equation}
First, let $\chi_{\tau_{\ell - 1}}$ be the function described in the first paragraph of Section 4 and let $\chi_{\tau_{\ell - 1}}(D_x)$ denote the associated Fourier multiplier. As a  reminder, we have $\Sl f_{\tau_{\ell - 1}} = \Sl \chi_{\tau_{\ell - 1}}(D_x) f_{\tau_{\ell - 1}}$. Now, choose a function $g$ such that
\begin{equation} \label{endpoint large height duality function}
\begin{split}
        \mixlpnorm{g}{2}{q_e'}{\Tn \times [0, \delta^{-1}]} &= 1 \ \text{and} \\
        \mixlpnorm{\Sl \chi_{\tau_{\ell - 1}}(D_x)f_{\tau_{\ell - 1}}}{2}{q_e}{A^{+}_{\tau_{\ell - 1}}} &= \Big|\iint \Sl \chi_{\tau_{\ell - 1}}(D_x) f_{\tau_{\ell - 1}}(x,t)\mathbbm{1}_{A^{+}_{\tau_{\ell - 1}}}(x,t)\overline{ g(x,t)} dx dt\Big|.
\end{split}
\end{equation}
Since $\altlpnorm{f_{\tau_{\ell - 1}}}{2} = 1$, then by repeating some of our calculations from Section 4 we have
\begin{equation}
\begin{split}
\mixlpnorm{\Sl  f_{\tau_{\ell - 1}}}{2}{q_e}{A^{+}_{\tau_{\ell - 1}}}^2 &= \mixlpnorm{\Sl \chi_{\tau_{\ell - 1}}(D_x) f_{\tau_{\ell - 1}}}{2}{q_e}{A^{+}_{\tau_{\ell - 1}}}^2 \\
    &= \Big|\int f_{\tau_{\ell - 1}}(x)\overline{\big(\Sl \chi_{\tau_{\ell - 1}}(D_x)\big)^* (\mathbbm{1}_{A^{+}_{\tau_{\ell - 1}}}g)(x)} dx \Big|^2 \\
     &\leq \iint \Sl \chi_{\tau_{\ell -1}}(D_x) \big( \Sl \chi_{\tau_{\ell - 1}}(D_x)\big)^*\big( \mathbbm{1}_{A^{+}_{\tau_{\ell - 1}}}g\big)(x,t) \overline{\big( \mathbbm{1}_{A^{+}_{\tau_{\ell - 1}}}g\big)(x,t)}dxdt \\
    &=\iint L_{\tau_{\ell - 1}}^{\la}\big( \mathbbm{1}_{A^{+}_{\tau-{\ell - 1}}}g\big)(x,t) \overline{\big( \mathbbm{1}_{A^{+}_{\tau_{\ell - 1}}}g\big)(x,t)}dxdt \\ 
    &+ \iint G_{\tau_{\ell - 1}}^{\la}\big( \mathbbm{1}_{A^{+}_{\tau_{\ell - 1}}}g\big)(x,t) \overline{\big( \mathbbm{1}_{A^{+}_{\tau_{\ell - 1}}}g\big)(x,t)}dxdt \\ 
    &= I + II
\end{split}
\end{equation}
where $L_{\tau_{\ell -1}}^{\la}$ is defined as in (\ref{large height local kernel I}) and (\ref{large height local kernel II}).
Once again, the classical Strichartz estimate of Burq-Gerard-Tzvetkov \cite{bgtstrichartzmanifold} implies that
\begin{equation}
    \mixlpnorm{L_{\tau_{\ell -1}}^{\la}H}{2}{q_e}{\Tn \times [0,\delta^{-1}]} \lesssim \la \mixlpnorm{H}{2}{q_e'}{\Tn \times [0,\delta^{-1}]}.
\end{equation}
Using this after applying H\"{o}lder's inequality in the $x$-integral and the Cauchy-Schwarz inequality in the $t$-integral, we see that 
\begin{equation} \label{endpoint large height 1st term estimate}
    \begin{split}
        \big| I \big| &\leq \mixlpnorm{L_{\tau_{\ell - 1}}^{\la}\big( \mathbbm{1}_{A^{+}_{\tau_{\ell - 1}}}g\big)}{2}{q_e}{\Tn \times [0,\delta^{-1}]}  \mixlpnorm{\mathbbm{1}_{A^{+}_{\tau_{\ell - 1}}}g}{2}{q_e'}{\Tn \times [0,\delta^{-1}]} \\
        &\lesssim \la \mixlpnorm{\mathbbm{1}_{A^{+}_{\tau_{\ell - 1}}}g}{2}{q_e'}{\Tn \times [0,\delta^{-1}]}^2 \leq \la\mixlpnorm{g}{2}{q_e'}{\Tn \times [0,\delta^{-1}]}^2 = \la.
    \end{split}
\end{equation}
By Corollary 6.0.1, we have
\begin{equation} \label{endpoint kernel estimate for large heights}
\begin{split}
    \big| \Sl \chi_{\tau_{\ell - 1}}(D_x) \big(\Sl \chi_{\tau_{\ell - 1}}(D_x)\big)^*(x,t;y,s) \big|  \lesssim \delta_{K}^{(n-1)(\ell-1)} \big(\la \delta^{-1}\big)^{\frac{n-1}{2}}, \ \text{if } |t-s| \leq 2\delta^{-1}.
\end{split}
\end{equation}
It follows that 
\begin{equation}
    \Lpopnormalt{G_{\tau_{\ell - 1}}^{\la}}{1}{x,t}{\infty}{x,t} \leq C\delta_{K}^{(n-1)(\ell - 1)}(\la \delta^{-1})^{\frac{n-1}{2}}
\end{equation}
for some constant $C$. Using this and then applying H\"{o}lder's inequality in the $x$-integral followed by the Cauchy-Schwartz inequality in the $t$-integral, we see that
\begin{equation}
\begin{split}
    \big| II \big| &\leq C\delta_{K}^{(n-1)\ell}(\la \delta^{-1})^{\frac{n-1}{2}} \altlpnorm{\mathbbm{1}_{A^{+}_{\tau_{\ell - 1}}}g}{1}^2 \\
    &\leq C\delta_{K}^{(n-1)(\ell-1)}(\la \delta^{-1})^{\frac{n-1}{2}}\mixlpnorm{g}{2}{q_e'}{\Tn \times [0,\delta^{-1}]}^{2} \mixlpnorm{\mathbbm{1}_{A^{+}_{\tau_{\ell - 1}}}}{2}{q_e}{\Tn \times [0,\delta^{-1}]}^2 \\
    &= C\delta_{K}^{(n-1)(\ell-1)}(\la \delta^{-1})^{\frac{n-1}{2}}\mixlpnorm{\mathbbm{1}_{A^{+}_{\tau_{\ell - 1}}}}{2}{q_e}{\Tn \times [0,\delta^{-1}]}^2.
\end{split}
\end{equation}
The definition of the set $A^{+}_{\tau_{\ell - 1}}$ in (\ref{height decomposition definitions}) yields
\begin{equation}
\begin{split}
    \mixlpnorm{\mathbbm{1}_{A^{+}_{\tau_{\ell - 1}}}}{2}{q_e}{\Tn \times [0,\delta^{-1}]}^2 &\leq \Big(C_0\big(\la \delta^{-1}\delta_{K}^{2(\ell - 1)}\big)^{\frac{n-1}{4}}\Big)^{-2} \mixlpnorm{\Sl f_{\tau_{\ell - 1}}}{2}{q_e}{A^{+}_{\tau_{\ell - 1}}}^2 \\
    &= \Big(C_0^2 (\la \delta^{-1})^{\frac{n-1}{2}}\delta_{K}^{(n-1)(\ell - 1)}\Big)^{-1}\mixlpnorm{\Sl f_{\tau_{\ell - 1}}}{2}{q_e}{A^{+}_{\tau_{\ell - 1}}}^2.
\end{split}
\end{equation}
Choosing $C_0$ large enough so that $C_0^{-2}C \leq \frac{1}{2}$ we get 
\begin{equation}
    \big| II \big| \leq \frac{1}{2}\mixlpnorm{\Sl f_{\tau_{\ell - 1}}}{2}{q_e}{A^{+}_{\tau_{\ell - 1}}}^2.
\end{equation}
Combining this with our estimate for $I$ in (\ref{endpoint large height 1st term estimate}) we have
\begin{equation}
    \mixlpnorm{\Sl f_{\tau_{\ell - 1}}}{2}{q_e}{A^{+}_{\tau_{\ell - 1}}}^2 \leq C \la \altlpnorm{f_{\tau_{\ell - 1}}}{2}^2 + \frac{1}{2}\mixlpnorm{\Sl f_{\tau_{\ell  -1}}}{2}{q_e}{A^{+}_{\tau_{\ell - 1}}}^2
\end{equation}
which yields (\ref{large height endpoint estimate}).
\subsection{Small Height Estimate at Scale $\dK{\ell-1}$}
We now prove the analog of Proposition 3.2. We start by using our broad-narrow decomposition (\ref{broad-narrow decomposition}) to write
\begin{equation}
\begin{split}
    &\mixlpnorm{\Sl f_{\tau_{\ell - 1}}}{2}{q_e}{A^{-}_{\tau_{\ell - 1}}}^2 = \int\Big(\int |\Sl f_{\tau_{\ell - 1}}(x,t)|^{q_e - q}|\Sl f_{\tau_{\ell - 1}}(x,t)|^{q}\mathbbm{1}_{A_{\tau_{\ell - 1}}^{-}}(x,t)dx\Big)^{\frac{2}{q_e}}dt \\
    &=  \int\Big(\int |\Sl f_{\tau_{\ell - 1}}(x,t)|^{q_e - q}\Big|\sum_{\tau_{\ell} \subset \tau_{\ell - 1}}\Sl f_{\tau_{\ell}}(x,t)\Big|^{q}\mathbbm{1}_{A_{\tau_{\ell - 1}}^{-}}(x,t)dx\Big)^{\frac{2}{q_e}}dt \\ 
    &\leq C_1^{\frac{2q}{q_e}}\int \Big(\int|\Sl f_{\tau_{\ell - 1}}(x,t)|^{q_e - q}\Big(\sum_{\tau_{\ell} \subset \tau_{\ell - 1}}|\Sl f_{\tau_{\ell}}(x,t)|^{2}\Big)^{\frac{q}{2}}\mathbbm{1}_{A_{\tau_{\ell - 1}}^{-}}(x,t)dx\Big)^{\frac{2}{q_e}}dt \\
    &+ C_2^{\frac{2q}{q_e}}\dK{-C\frac{2q}{q_e}} \int\Big( \sum_{\substack{\tau_{\ell}, \tau_{\ell}' \subset \tau_{\ell - 1} \\ \text{dist}(\tau_{\ell} , \tau_{\ell}') \gtrsim \la\dK{\ell}}}\int |\Sl f_{\tau_{\ell - 1}}(x,t)|^{q_e - q}  |\Sl f_{\tau_{\ell}}(x,t)\Sl f_{\tau_{\ell}'}(x,t)|^{\frac{q}{2}}\mathbbm{1}_{A_{\tau_{\ell - 1}}^{-}}(x,t)dx\Big)^{\frac{2}{q_e}}dt \\ 
    & = I + II.
\end{split}
\end{equation}
Just as in Section 5, let $Q_{\ell}f_{\tau_{\ell - 1}}(x,t) = \big( \sum_{\tau_{\ell} \subset \tau_{\ell - 1}}|\Sl f(x,t)|^2\big)^{\frac{1}{2}}$. Then
\begin{equation} \label{narrow term endpoint estimate}
\begin{split}
    I &= C_1^{\frac{2q}{q_e}}\int \Big(\int|\Sl f_{\tau_{\ell - 1}}(x,t)|^{q_e - q}|\Sq{f_{\tau_{\ell - 1}}}{}{\ell}(x,t)|^{q}\mathbbm{1}_{A_{\tau_{\ell - 1}}^{-}}(x,t)dx\Big)^{\frac{2}{q_e}}dt \\
    &\leq C_1^{\frac{2q}{q_e}}\mixlpnorm{\Sl f_{\tau_{\ell - 1}}}{2}{q_e}{A_{\tau_{\ell - 1}}^{-}}^{2(1 - \frac{q}{q_e})} \mixlpnorm{\Sq{f_{\tau_{\ell - 1}}}{}{\ell}}{2}{q_e}{\Tn \times [0, \delta^{-1}]}^{\frac{2q}{q_e}} \\
    &\leq \frac{1}{2}\mixlpnorm{\Sl f_{\tau_{\ell - 1}}}{2}{q_e}{A_{\tau_{\ell - 1}}^{-}}^{2} + C\mixlpnorm{\Sq{f_{\tau_{\ell - 1}}}{}{\ell}}{2}{q_e}{\Tn \times [0, \delta^{-1}]}^{2} \\
    &\leq \frac{1}{2}\mixlpnorm{\Sl f_{\tau_{\ell - 1}}}{2}{q_e}{A_{\tau_{\ell - 1}}^{-}}^{2} + C\sum_{\tau_{\ell} \subset \tau_{\ell - 1}} \mixlpnorm{\Sl f_{\tau_{\ell}}}{2}{q_e}{\Tn \times [0, \delta^{-1}]}^2
\end{split} 
\end{equation}
where we used H\"{o}lder's inequality to go from line 1 to line 2, then Young's inequality (with $\epsilon = \big(2 C_{1}^{\frac{2q}{q_e}}\big)^{-1}$, see Appendix B of \cite{EvansPDE} for more details) in the second-to-last line and, finally, Minkowski's inequality in the last line.
Further, using H\"{o}lder's inequality in the $t$-integral followed by Lemma 5.2 we have
\begin{equation} \label{broad term endpoint estimate}
    \begin{split}
        &II \lesssim \dK{-C\frac{2q}{q_e}}\lpnorm{\Sl f_{\tau_{\ell - 1}}}{\infty}{A_{\tau_{\ell - 1}}^{-}}^{\frac{2}{q_e}(q_e - q)}\int_{0}^{\delta^{-1}}\Big( \sum_{\substack{\tau_{\ell}, \tau_{\ell}' \subset \tau_{\ell - 1} \\ \text{dist}(\tau_{\ell} , \tau_{\ell}') \gtrsim \la\dK{\ell}}}\int_{\T{n-1}}   |\Sl f_{\tau_{\ell}}(x,t)\Sl f_{\tau_{\ell}'}(x,t)|^{\frac{q}{2}}dx\Big)^{\frac{2}{q_e}}dt \\
        &\leq \dK{-C\frac{2q}{q_e}}\delta^{-(1 - \frac{2}{q_e})}\lpnorm{\Sl f_{\tau_{\ell - 1}}}{\infty}{A_{\tau_{\ell - 1}}^{-}}^{\frac{2}{q_e}(q_e - q)} \Big(\sum_{\substack{\tau_{\ell}, \tau_{\ell}' \subset \tau_{\ell - 1} \\ \text{dist}(\tau_{\ell} , \tau_{\ell}') \gtrsim \la\dK{\ell}}} \iint   |\Sl f_{\tau_{\ell}}(x,t)\Sl f_{\tau_{\ell}'}(x,t)|^{\frac{q}{2}}dxdt \Big)^{\frac{2}{q_e}} \\
        &\lesssim_{\varepsilon} \Big(\big(\la \delta^{-1}\big)^{\frac{n-1}{4}}\dK{\frac{n-1}{2}(\ell - 1)}\Big)^{\frac{2}{q_e}(q_e - q)} \Big(\la^{q\frac{n-1}{2} - n + \varepsilon} \dK{\ell(q\frac{n-1}{2} - n -1)} \delta^{-1}\Big)^{\frac{2}{q_e}} \dK{-C\frac{2q}{q_e}}\delta^{-(1 - \frac{2}{q_e})}\Big(\sum_{\substack{\tau_{\ell},\tau_{\ell}' \subset \tau_{\ell - 1} \\ dist(\tau_{\ell}, \tau_{\ell}') \gtrsim \la \dK{\ell}}} \altlpnorm{f_{\tau_{\ell}}}{2}^{\frac{q}{2}}\lVert f_{\tau_{\ell}'}\rVert_{2}^{\frac{q}{2}}\Big)^{\frac{2}{q_e}} \\
        & \leq \Big(\la^{q\frac{n-1}{2} - n + \varepsilon} \dK{\ell(q\frac{n-1}{2} - n -1)}\Big)^{\frac{2}{q_e}}  \dK{-C\frac{2q}{q_e}} \delta^{-1} \Big(\big(\la \delta^{-1}\big)^{\frac{n-1}{4}}\dK{\frac{n-1}{2}(\ell - 1)}\Big)^{\frac{2}{q_e}(q_e - q)} \big( \#\{\tau_{\ell} \subset \tau_{\ell - 1}\} \big)^{\frac{4}{q_e}} \altlpnorm{f_{\tau_{\ell - 1}}}{2}^{\frac{q}{q_e}} \\
        & \lesssim \Big(\la^{q\frac{n-1}{2} - n + \varepsilon} \dK{\ell(q\frac{n-1}{2} - n -1)}\Big)^{\frac{2}{q_e}}  \dK{-C\frac{2q}{q_e}} \delta^{-1} \Big(\big(\la \delta^{-1}\big)^{\frac{n-1}{4}}\dK{\frac{n-1}{2}(\ell - 1)}\Big)^{\frac{2}{q_e}(q_e - q)} \dK{-\frac{4}{q_e}(n-1)} \altlpnorm{f_{\tau_{\ell - 1}}}{2}^2.
    \end{split}
\end{equation}
Note that 
\begin{equation*}
    \tfrac{2}{q_e}(q_e - q) = \tfrac{12}{n(n-1)} \ \ \implies \ \ \tfrac{n-1}{4} \cdot \tfrac{2}{q_e}\big(q_e  - q \big) = \tfrac{3}{n},
\end{equation*}

\begin{equation*}
    q\tfrac{n-1}{2} - n = \tfrac{n-2}{n} \ \ \implies \ \ \Big(q \tfrac{n-1}{2} - n\Big)\tfrac{2}{q_e} =  \tfrac{n^2 - 5n + 6}{n(n-1)}
\end{equation*}
and 
\begin{equation*}
    q\tfrac{n-1}{2} - n - 1 = -\tfrac{2}{n} \ \ \implies \ \ \big(q \tfrac{n-1}{2} - n - 1\big)\tfrac{2}{q_e} =  -\tfrac{2(n-3)}{n(n-1)}.
\end{equation*}
\noindent 
Collecting the above arithmetic for the exponents  we have
\begin{equation}
    \begin{split}
        II &\lesssim_{\varepsilon} \la^{\frac{n^2 - 5n + 6}{n(n-1)} + \frac{3}{n} + \frac{2}{q_e}\varepsilon}\dK{\ell\big( \frac{6}{n} - \frac{2(n-3)}{n(n-1)}\big)}\delta^{-1 - \frac{3}{n}}\dK{-C'} \lpnorm{f_{\tau_{\ell - 1}}}{2}{\T{n-1}}^2 \\
        &\leq \la^{\frac{n^2 - 2n + 2}{n(n-1)} + \frac{2}{q_e}\varepsilon} \delta^{-1 - \frac{3}{n}}\dK{-C'} \lpnorm{f_{\tau_{\ell - 1}}}{2}{\T{n-1}}^2 \\
        &= \la^{1 - \frac{n - 3}{n(n-1)} + \frac{2}{q_e}\varepsilon} \delta^{-1 - \frac{3}{n}}\dK{-C'} \lpnorm{f_{\tau_{\ell - 1}}}{2}{\T{n-1}}^2
    \end{split}
\end{equation}
since $\frac{6}{n} - \frac{2(n-3)}{n(n-1)} > 0$ and $\dK{\ell} < 1$ for all $1\leq \ell \leq K$.

Thus, we have proved the following proposition.
\begin{proposition}
    Fix $\kappa > 0$. Then for any $\delta \geq \la^{-1 + \kappa}$ and for any $\varepsilon  > 0$ we have
    \begin{equation}
    \begin{split}
        \mixlpnorm{\Sl f_{\tau_{\ell - 1}}}{2}{q_e}{A^{-}_{\tau_{\ell - 1}}} &\leq C\Big(\sum_{\tau_{\ell} \subset \tau_{\ell - 1}}\mixlpnorm{\Sl f_{\tau_{\ell}}}{2}{q_e}{\Tn \times [0, \delta^{-1}]}^2\Big)^{\frac{1}{2}} \\
        &+ C_{\varepsilon}\la^{\frac{1}{2} - \frac{n-3}{2n(n-1)}+\varepsilon}\delta^{-\frac{n+3}{2n}}\dK{-C}\lpnorm{f_{\tau_{\ell - 1}}}{2}{\Tn}.
    \end{split}
    \end{equation}
\end{proposition}

\subsection{Proof of Theorem 1.2}
Note that our large height estimate (\ref{large height endpoint estimate}) and Proposition 8.1 combine to yield the following analog of Proposition 3.1.
\begin{proposition}
    Fix $\kappa > 0$. Then for any $\delta \geq \la^{-1 + \kappa}$ and for any $\varepsilon  > 0$ we have
    \begin{equation}
    \begin{split}
        \mixlpnorm{\Sl f_{\tau_{\ell - 1}}}{2}{q_e}{\Tn \times [0,\delta^{-1}]} &\leq C_{\varepsilon}\big(\la^{\frac{1}{2}} +  \la^{\frac{1}{2} - \frac{n-3}{2n(n-1)}+\varepsilon}\delta^{-\frac{n+3}{2n}}\dK{-C}\big)\lpnorm{f_{\tau_{\ell - 1}}}{2}{\Tn} \\ 
        &+ C'\Big(\sum_{\tau_{\ell} \subset \tau_{\ell - 1}}\mixlpnorm{\Sl f_{\tau_{\ell}}}{2}{q_e}{\Tn \times [0, \delta^{-1}]}^2\Big)^{\frac{1}{2}}.
    \end{split}
    \end{equation}
\end{proposition}
We now turn to the Proof of Theorem 1.2. As with the proof of Theorem 1.1, the proof follows from applying Proposition 8.2 for $1 \leq \ell < K$. We also still need an estimate when $\ell = K$. Let $\tau \subset Q_{\la}$ be a cube of side-length $2\la \delta$. Let $\chi_{\tau}$ be defined as in Section 6. Then we again have
\begin{equation}
    \Sl \chi_{\tau}(D_x)f(x, t) = S_{\la, t} \chi_{\tau}(D_x)f(x) 
\end{equation}
and we can use Corollary 6.0.1 to conclude that
\begin{equation}
    \lpnorm{S_{\la, t}\chi_{\tau}(D_x) \big( S_{\la,s} \chi_{\tau}(D_x)\big)^{*} f}{\infty}{\Tn} \lesssim\la^{\frac{n-1}{2}}|t-s|^{-\frac{n-1}{2}}\lpnorm{f}{1}{\Tn}.
\end{equation}
Also, by Parseval's Theorem
\begin{equation}
    \lpnorm{S_{\la, t}\chi_{\tau}(D_x)f}{2}{\Tn} \lesssim \lpnorm{f}{2}{\Tn}.
\end{equation}
Thus, we can use a simple rescaling argument and the Keel-Tao theorem from \cite{KT} to conclude that
\begin{equation} \label{endpoint K scale estimate}
     \mixlpnorm{\Sl \chi_{\tau}(D_x)f}{2}{q_e}{\Tn \times [0, \delta^{-1}]} \lesssim \la^{\frac{1}{2}}\lpnorm{f}{2}{\Tn}.
\end{equation}
See the proof of Proposition 2.1 in \cite{HSSchro} or the proof of (2.51) in \cite{HuangSoggeStrichartz} for more details.

Now, suppose $\tau_{K-1} \subset Q_{\la}$ is a cube of side-length $2\la \dK{K-1}$. Then by Proposition 8.2 we have
\begin{equation} \label{endpoint broad-narrow at scale K}
\begin{split}
    \mixlpnorm{\Sl f_{\tau_{K-1}}}{2}{q_e}{\Tn\times[0, \delta^{-1}]} \leq C_{\varepsilon}\big(\la^{\frac{1}{2}} +  \la^{\frac{1}{2} - \frac{n-3}{2n(n-1)}+\varepsilon}\delta^{-\frac{n+3}{2n}}\dK{-C}\big)\altlpnorm{f_{\tau_{k-1}}}{2} \\
    + C'\Big(\sum_{\tau_{K} \subset \tau_{K-1}}\mixlpnorm{\Sl f_{\tau_{K}}}{2}{q_e}{\Tn \times [0, \delta^{-1}]}^{2}\Big)^{\frac{1}{2}}
\end{split}
\end{equation}
\noindent
By (\ref{endpoint K scale estimate}) we have
\begin{equation}\label{endpoint smallest scale Strichartz}
    \mixlpnorm{\Sl f_{\tau_{K}}}{2}{q_e}{\Tn \times [0, \delta^{-1}]} = \mixlpnorm{\Sl \chi_{\tau_{K}}(D_x)f_{\tau_{K}}}{2}{q_e}{\Tn \times [0, \delta^{-1}]}   \lesssim \la^{\frac{1}{2}}\altlpnorm{f_{\tau_{K}}}{2}
\end{equation}
Combining this with (\ref{endpoint broad-narrow at scale K}) and using the orthogonality of the functions $\{ f_{\tau_{K}}\}_{\tau_K \subset \tau_{K-1}}$ we have
\begin{equation}
    \mixlpnorm{\Sl f_{\tau_{K-1}}}{2}{q_e}{\Tn\times[0, \delta^{-1}]} \lesssim_{\varepsilon}\big(\la^{\frac{1}{2}} +  \la^{\frac{1}{2} - \frac{n-3}{2n(n-1)}+\varepsilon}\delta^{-\frac{n+3}{2n}}\dK{-C}\big)\altlpnorm{f_{\tau_{K-1}}}{2}.
\end{equation}

Now, suppose we know that for some $1 \leq \ell < K$
\begin{equation} \label{endpoint scale l estimate}
    \mixlpnorm{\Sl f_{\tau_{\ell}}}{2}{q_e}{\Tn\times[0, \delta^{-1}]} \leq C_{\varepsilon}\big(\la^{\frac{1}{2}} +  \la^{\frac{1}{2} - \frac{n-3}{2n(n-1)}+\varepsilon}\delta^{-\frac{n+3}{2n}}\dK{-C}\big)\altlpnorm{f_{\tau_{\ell}}}{2}
\end{equation}
for any cube $\tau_{\ell} \subset Q_{\la}$ of side-length $2\la \dK{\ell}$. Fix a cube $\tau_{\ell - 1} \subset Q_{\la}$. Using Proposition 8.2, (\ref{endpoint scale l estimate}) and the orthogonality of the functions $\{f_{\tau_{\ell}}\}_{\tau_{\ell}\subset \tau_{\ell - 1}}$ we have
\begin{equation}
\begin{split}
    \mixlpnorm{\Sl f_{\tau_{\ell-1}}}{2}{q_e}{\Tn\times[0, \delta^{-1}]} \leq C_{\varepsilon} \big(\la^{\frac{1}{2}} +  \la^{\frac{1}{2} - \frac{n-3}{2n(n-1)}+\varepsilon}\delta^{-\frac{n+3}{2n}}\dK{-C}\big)\altlpnorm{f_{\tau_{\ell-1}}}{2} \\
    + C'\Big(\sum_{\tau_{\ell} \subset \tau_{\ell-1}}\mixlpnorm{\Sl f_{\tau_{\ell}}}{2}{q_e}{\Tn \times [0, \delta^{-1}]}^{2}\Big)^{\frac{1}{2}} \\
    \lesssim_{\varepsilon} \big(\la^{\frac{1}{2}} +  \la^{\frac{1}{2} - \frac{n-3}{2n(n-1)}+\varepsilon}\delta^{-\frac{n+3}{2n}}\dK{-C}\big)\altlpnorm{f_{\tau_{\ell-1}}}{2}
\end{split}
\end{equation}
We once again iterate this argument to yield that for any function $ f(x) = \sum_{k \in \tilde{Q_{\la}}}\widehat{f}(k)e(x \cdot k)$ we have
\begin{equation}
    \mixlpnorm{\Sl f}{2}{q_e}{\Tn \times [0, \delta^{-1}]} \leq C_{\varepsilon, K}\big(\la^{\frac{1}{2}} +  \la^{\frac{1}{2} - \frac{n-3}{2n(n-1)} + \varepsilon}\delta^{-\frac{n+3}{2n}}\dK{-C}\big)\altlpnorm{f}{2}.
\end{equation}
As in Section 7, by choosing $K$ large enough (depending on $\varepsilon$) we have $\dK{-C} < \la^{\varepsilon}$. Further, 
\begin{equation}
    \la^{\frac{1}{2} - \frac{n-3}{2n(n-1)} + \varepsilon}\delta^{-\frac{n+3}{2n}}\dK{-C} \leq \la^{\frac{1}{2} - \frac{n-3}{2n(n-1)} + 2\varepsilon}\delta^{-\frac{n+3}{2n}} \leq \la^{\frac{1}{2}}, \ \text{if} \ \delta \geq \la^{-\frac{n-3}{(n-1)(n+3)} + \frac{4n}{n+3}\varepsilon}.  
\end{equation}

Thus, for any $\varepsilon > 0$, if $\delta \geq \la^{-\frac{n-3}{(n-1)(n+3)} + \varepsilon}$ then 
\begin{equation}
    \mixlpnorm{\Sl f}{2}{q_e}{\Tn \times [0, \delta^{-1}]} \lesssim_{\varepsilon} \la^{\frac{1}{2}}\lpnorm{f}{2}{\Tn}
\end{equation}
and by rescaling this implies
\begin{equation}
    \mixlpnorm{e^{-it \laplac{\Tn}}\beta(D_x/\la)f}{2}{q_e}{\Tn \times [0, \frac{1}{\la \delta}]} \lesssim_{\varepsilon} \lpnorm{f}{2}{\Tn}.
\end{equation}
This completes the proof of Theorem 1.2 on the square torus.
\qed

\section{Generalizing the results to Rectangular Tori}

Consider a rectangular torus $ \Tn_{\alpha_1...\alpha_{n-1}} = \prod_{i = 1}^{n-1} \left(\R{}/2\pi\alpha_i \Z{}\right)$ where $\alpha_i \geq 1$ for $i = 1,..., n-1$. For $x \in \Tn_{\alpha_1...\alpha_{n-1}}$ we can write $x = (\alpha_1y_1,...,\alpha_{n-1}y_{n-1})$ where $y = (y_1,...y_{n-1}) \in \Tn$ and $\Tn = \left( \R{}/2\pi\Z{}\right)^{n-1}$ is once again the standard square torus. Let $B$ be the $n-1 \times n-1$ matrix with the $\alpha_i$ along the diagonal and $0$ everywhere else and let $A = \left(B^{-1}\right)^{t}$. Note then that $A$ is the $n-1 \times n-1$ matrix with $(\alpha_i)^{-1}$ along the diagonal and 0 everywhere else. Note that for any function $g \in L^2(\Tn_{\alpha_1...\alpha_{n-1}})$ we can write $g(x) = f(y)$ where $y \in \Tn$ and $f = g \circ B$ is a function in $L^2(\Tn)$. We can expand $f$ in a Fourier series on $\Tn$
\begin{equation*}
    g(x) = f(y) = \sum_{k\in \Zn}\widehat{f}(k)e(y \cdot k).
\end{equation*}
Further, we have $D_x = A D_y$ and
\begin{equation}
    -\laplac{\Tn_{\alpha_1...\alpha_{n-1}}}g(x) = \sum_{k \in \Zn}\langle A^tAk,k\rangle\widehat{f}(k)e(y \cdot k) = \sum_{k \in \Zn}\widehat{f}(k)Q(k)e(y \cdot k) = Q(D_y)f(y)
\end{equation}
where $Q$ is given by $Q(\xi) = \langle A^tA\xi, \xi\rangle = \sum_{i = 1}^{n-1}\theta_i\xi_i^2$ and $\theta_i = (\alpha_i)^{-2}$. Thus, to prove Theorems 1.1 1.2 on $\Tn_{\alpha_1...,\alpha_{n-1}}$ it suffices to show that for any $f \in L^2(\Tn)$ of the form $f(y) = \sum_{k \in \tilde{Q_{\la}}}\widehat{f}(k)e(y\cdot k)$ we have
\begin{equation} \label{critical rectangular strichartz}
    \lpnorm{e^{itQ(D_y)}\beta(D_y/\la)f}{q_c}{\Tn \times [0,\frac{1}{\la \delta}]]} \lesssim_{\varepsilon} \lpnorm{f}{2}{\Tn} \ \ \ \text{if} \ \ \delta \geq \la^{-\frac{1}{n+1} + \varepsilon}
\end{equation}
and
\begin{equation}\label{endpoint rectangular strichartz}
    \mixlpnorm{e^{itQ(D_y)}\beta(D_y/\la)f}{2}{q_e}{\Tn \times [0,\frac{1}{\la \delta}]]} \lesssim_{\varepsilon} \lpnorm{f}{2}{\Tn} \ \ \ \text{if} \ \ \delta \geq \la^{-\frac{n-3}{(n-1)(n+3)} + \varepsilon}
\end{equation}
for any $\varepsilon > 0$. Let $\eta$ be the bump function defined in Section 3 and define 
\begin{equation} \label{rectangular time-dilated schrodinger propagator}
    \Sl^{Q}f(y,t) = \eta(\delta t)e^{i\la^{-1}tQ(D_y)}\beta(D_y/\la)f(y).
\end{equation}
Then, by rescaling, (\ref{critical rectangular strichartz}) and (\ref{endpoint rectangular strichartz}) are equivalent to showing that 
\begin{equation} 
    \mixlpnorm{\Sl^{Q} f}{p}{q}{\Tn \times [0,\delta^{-1}]} \lesssim \la^{\frac{1}{p}}\lpnorm{f}{2}{\Tn}
\end{equation}
for $(p,q) = (q_c,q_c), (2,q_e)$ and for the same ranges of $\delta$ as in (\ref{critical rectangular strichartz}) and (\ref{endpoint rectangular strichartz}).

Note that the phase function $\xi \mapsto Q(\xi)$ scales the same as $\xi \mapsto |\xi|^2$. Upon examining the proofs of Theorems 1.1 and 1.2 for the square torus, we see that we would be able to replicate our arguments for that special case if we had the following three lemmas. The first lemma is the analog of the discrete bilinear Fourier extension estimate in Lemma 5.5.
\begin{lemma}
    Fix $q = \frac{2(n+2)}{n}$ and let $\omega_1, \omega_2$ be two cubes in $[-1,1]^{n-1}$ with side length $\sim D > 0$ such that dist$(\omega_1, \omega_2) \gtrsim D$. Let $R \gtrsim 1$ with $R^{-1} \lesssim D$ and let $B_{R}$ be any ball of radius $R$ in $\mathbb{R}^n$. Let $\Lambda_i = \{(\xi, Q(\xi)) :   \xi \in \omega_i \}$, $i  =1,2$ be two collections of $R^{-1}$ separated points on the hypersurface associated to $Q$ and let $\Xi_i = \{\xi \in \omega_i : (\xi, Q(\xi)) \in \Lambda_i \}$. Then for any $\varepsilon > 0$ we have
    \begin{equation} \label{bilinear exponential sum estimate for Q}
        \norm{\Big| \prod_{i = 1}^{2} \sum_{\xi \in \Xi_i}a_{\xi} e(x \cdot \xi + tQ(\xi)) \Big|^{\frac{1}{2}}}_{L_{x,t}^{q}(B_R)} \lesssim_{\varepsilon} R^{\frac{n-1}{2} + \varepsilon} D^{\frac{n-1}{2} - \frac{n+1}{q} + \varepsilon} \Big( \prod_{i = 1}^{2} \litlpnorm{a_\xi}{2}{\Xi_i} \Big)^{\frac{1}{2}}.
    \end{equation}
\end{lemma}
\noindent This estimate follows from the arguments in Section 5.1 and the bilinear restriction estimate

\begin{equation}\label{bilinear restriction for Q}
    \lpnorm{\big|E^{Q}_{\omega_1}fE^{Q}_{\omega_2}f \big|^{\frac{1}{2}}}{q}{B_{R}} \lesssim_{\varepsilon} R^{\varepsilon}\left( \lpnorm{f}{2}{\omega_1}\lpnorm{f}{2}{\omega_2}\right)^{\frac{1}{2}}
\end{equation}
where $\omega_1$ and $\omega_2$ are two cubes in $[-1,1]^{n-1}$ with $dist(\omega_1, \omega_2) > 0$ and 
\begin{equation*}
    E^{Q}_{\omega_i}f(x,t) = \int_{\omega_i}f(\xi)e(x\cdot \xi + tQ(\xi))d\xi.
\end{equation*}
Note that (\ref{bilinear restriction for Q}) follows directly from Theorem 5.3 and a straightforward argument involving a change of variables. The implicit constant in the estimate depends on the parameters $\alpha_i$ (i.e. it depends on the particular rectangular torus we are working over) but this is allowed. 

The second lemma is the analog of our one-dimensional kernel estimate from Section 6 and  follows directly from Proposition 6.2. In the following, let $\phi$ and $\chi_{I}$ be the bump functions defined in Section 6.
\begin{lemma}
    Let $\kappa > 0$ and suppose $\delta \geq \la^{-1 + \kappa}$. Then for $x,y \in \T{}$ and $\theta \in (0,1]$ we have
\begin{equation} 
     \big|\eta(\delta t)\eta(\delta s)\big( \chi_{I}^{2}(D_x)\phi^2(D_x / \la)e^{-i\la^{-1}\theta(t-s)\laplac{\T{}}} \big)(x,y)\big| \lesssim \la^{\frac{1}{2}}|t-s|^{-\frac{1}{2}}\big(1 + \dK{m}|t-s|\big).
\end{equation} 
Thus, when $m = K$ (so that $\dK{m} = \delta$), since $|t-s| \leq 2\delta^{-1}$ we have 
\begin{equation}
    \big|\eta(\delta t)\eta(\delta s)\big( \chi_{I}^{2}(D)\phi^2(D / \la)e^{-i\la^{-1}\theta(t-s)\laplac{\T{}}} \big)(x,y) \big| \lesssim \la^{\frac{1}{2}}|t-s|^{-\frac{1}{2}}.
\end{equation}
\end{lemma}
Finally, our third lemma follows from Lemma 9.2 using the same argument we used to prove that Proposition 6.2 implies Corollary 6.0.1 and gives us the $n-1$ dimensional kernel estimates we require. Let $\chi_{\tau}$ and $\beta$ be the bump functions defined in Section 6.
\begin{lemma}
    Let $\kappa > 0$ and suppose $\delta \geq \la^{-1 + \kappa}$. If $x,y \in \T{n-1}$ then
    \begin{equation}
        \big|\eta(\delta t)\eta(\delta s)\big( \chi_{\tau}^{2}(D_x)\beta^2(D_x / \la)e^{i\la^{-1}(t-s)Q(D_x)} \big)(x,y)\big| \lesssim \la^{\frac{n-1}{2}}|t-s|^{-\frac{n-1}{2}}\big(1 + \dK{m}|t-s|\big)^{n-1}.
    \end{equation}
Thus, when $m = K$ (so that $\dK{m} = \delta$), since $|t-s| \leq 2\delta^{-1}$ we have 
\begin{equation}
    \big|\eta(\delta t)\eta(\delta s)\big( \chi_{\tau}^{2}(D_x)\beta^2(D_x / \la)e^{i\la^{-1}(t-s)Q(D_x)} \big)(x,y) \big| \lesssim \la^{\frac{n-1}{2}}|t-s|^{-\frac{n-1}{2}}.
\end{equation}
\end{lemma}
\noindent

Using these three lemmas and following our arguments in Sections 3-7 for the exponent pair $(q_c, q_c)$ and Section 8 for the exponent pair $(2,q_e)$ we attain the following two estimates:
\begin{equation}
    \lpnorm{\Sl^{Q}f}{q_c}{\Tn \times [0,\delta^{-1}]} \lesssim_{\varepsilon} \la^{\frac{1}{q_c}} \lpnorm{f}{2}{\Tn} \ \ \ \text{if} \ \ \delta \geq \la^{-\frac{1}{n+1} + \varepsilon}
\end{equation}
and 
\begin{equation}
    \mixlpnorm{\Sl^{Q}f}{2}{q_e}{\Tn \times [0,\delta^{-1}]} \lesssim_{\varepsilon} \la^{\frac{1}{2}} \lpnorm{f}{2}{\Tn} \ \ \ \text{if} \ \ \delta \geq \la^{-\frac{n-3}{(n-1)(n+3)} + \varepsilon}.
\end{equation}
As stated earlier, these estimates are equivalent to (\ref{critical rectangular strichartz}) and (\ref{endpoint rectangular strichartz}) which in turn yield Theorems 1.1 and 1.2 on $\Tn_{\alpha_1...\alpha_{n-1}}$.

\begin{remark}
    By carefully examining our proofs of Theorems 1.1 and 1.2 we see that for any Keel-Tao admissible exponent pair $(p,q)$ with $p \leq q$ (this occurs in the range where $q_c \leq q \leq q_e$) our methods can prove a lossless Strichartz estimate on a time interval with length depending on $p$ and $q$ which is shorter than the interval given by Theorem 1.1 and longer than the interval given by Theorem 1.2. We chose not to explore this here and focused on the critical exponent $q_c$ and the endpoint exponent pair $(2,q_e)$ for the sake of clarity and due to the historical importance of these specific exponents.
\end{remark}

\begin{remark}
    When $\Tn_{\alpha_1...\alpha_{n-1}}$ is a generic rectangular irrational torus then one could hope to improve the lower bounds for $\delta$ given in Theorems 1.1 and 1.2 by using the stronger dispersive properties of Schr\"{o}dinger waves on these manifolds. This dispersion is quantified by improved kernel estimates for the Sch\"{o}dinger propagators. For example, see Proposition 4.6 in \cite{Deng2017Strichartz}. Better kernel estimates would allow us to perform our height splitting at a smaller height, leading to improvements over Theorems 1.1 and 1.2.
\end{remark}

\begin{remark}
    Another possible avenue to improve the results given in Theorems 1.1 and 1.2 is to show that the results continue to hold on any flat torus. In this case, we can reduce the desired estimates to estimates for the time-dilated Schr\"{o}dinger propagator given in (\ref{rectangular time-dilated schrodinger propagator}) where $Q$ is now a general positive definite quadratic form. There is a bilinear restriction estimate for such phases analogous to (\ref{bilinear restriction for Q}) and one could likely prove the analog of Lemma 9.3 by using similar stationary phase arguments to those used in the proof of the one-dimensional estimates given in Proposition 6.2. However, as we just indicated, the results would not follow directly from the arguments we have presented here and we choose not to pursue them further.
\end{remark}

\subsection{Proof of Corollary 1.2.1} 
Let $\T{d}_{\alpha_1...\alpha_d}$ be a $d$-dimensional rectangular torus and let $q_e = \frac{2d}{d-2}$, as before. The spectral projection operator onto a $\delta$-window around the spectral parameter $\la  \gg 1$ is the multiplier operator defined via the spectral theorem
\begin{equation}
    P_{\la,\delta} = \mathbbm{1}_{A_{\la,\delta}}\left( \sqrt{-\laplac{\Tn_{\alpha_1...\alpha_d}}}\right)
\end{equation}
where $A_{\la, \delta} = [\la - \delta, \la + \delta]$. We now present the short proof that Theorem 1.2 implies Corollary 1.2.1. We remark again that the fact that a lossless Strichartz estimate at the endpoint $\left(2,q_e\right)$ on a time interval of length $\sim \frac{1}{\la\delta}$ implies the estimate (\ref{main spectral projection estimate}) is well-known. For example, see \cite{BlairHuangSoggeStrichartz}. However, we include the short proof for the sake of completeness.

Using an argument similar to the one used in the first part of Section 9, one can show that (\ref{main spectral projection estimate}) is equivalent to the following estimate for the Fourier multiplier $\mathbbm{1}_{A_{\la, \delta}}\left(\sqrt{Q(D_x)}\right)$ on the square torus $\T{d}$:
\begin{equation} \label{spectral projection estimate for Q}
    \lpnorm{\mathbbm{1}_{A_{\la, \delta}}\left(\sqrt{Q(D_x)}\right)f}{q_e}{\T{d}} \lesssim_{\varepsilon} (\la\delta)^{\frac{1}{2}}\lpnorm{f}{2}{\T{d}}, \ \ \ \text{if} \ \delta \geq \la^{-\frac{d-2}{d(d+4)} + \varepsilon}.
\end{equation}
Here $Q(\xi) = \sum_{i = 1}^{n-1}\theta_i\xi_i^2$ and $\theta_i = (\alpha_i)^{-2}$, as in the first part of Section 9. 
We also introduce the approximate spectral projection operator defined as
\begin{equation}
    \rho_{\la, \delta}^{Q}(D_x)f(x) = \sum_{k \in \Z{d}}\widehat{f}(k)\rho_{\la, \delta}^{Q}(k)e(x\cdot k)
\end{equation}
where 
\begin{equation}
    \rho_{\la, \delta}^{Q}(\xi) = \rho\big((\la\delta)^{-1}\big(Q(\xi) - \la^2 - \delta^2\big)\big)
\end{equation}
for $\rho \in \mathcal{S}(\R{})$ with $\rho(0) > 0$ and $\supp{\widehat{\rho}}\subset(-\frac{1}{2}, \frac{1}{2})$.

We want to show that (\ref{spectral projection estimate for Q}) holds. Since $\sqrt{Q(\xi)} \in [\la - \delta, \la + \delta]$ implies that $Q(\xi) \in [\la^2 + \delta^2 - 2\la\delta, \la^2 + \delta^2 +  2\la\delta]$ then standard duality and orthogonality arguments show it suffices to prove the following bound for the approximate spectral projection operator:
\begin{equation}
    \lpnorm{\rho_{\la, \delta}^{Q}(D_x)f}{q_e}{\T{d}} \lesssim_{\varepsilon} (\la\delta)^{\frac{1}{2}}\lpnorm{f}{2}{\T{d}}.
\end{equation}
We may also assume that $f$ is a function such that $f = \beta(D_x / \la)f$. By the Fourier inversion formula we have
\begin{equation} \label{fourier inversion approximate spectral projection}
    \rho_{\la, \delta}^{Q}(D_x)f(x) = \frac{\la \delta}{2 \pi}\int\widehat{\rho}(\la \delta t)e(-t(\la^2 + \delta^2))e^{itQ(D_x)}\beta(D_x/\la)f(x)dt. 
\end{equation}
Fix $\varepsilon > 0$ and let $\delta \geq \la^{-\frac{d-2}{d(d+4)} + \varepsilon}$. Using (\ref{fourier inversion approximate spectral projection}) and applying Minkowski's integral inequality followed by the Cauchy-Schwarz inequality, we have
\begin{equation}
\begin{split}
    \lpnorm{\rho_{\la, \delta}^{Q}(D_x)f}{q_e}{\T{d}} &\lesssim \la \delta \int_{|t| \leq \frac{1}{2\la \delta}} \lpnorm{e^{itQ(D_x)}\beta(D_x/\la)f }{q_e}{\T{d}}dt \\
    &\lesssim (\la \delta)^{\frac{1}{2}}\mixlpnorm{e^{itQ(D_x)}\beta(D_x/\la)f}{2}{q_e}{\T{d}\times[-\frac{1}{2\la\delta},\frac{1}{2\la\delta}]} \lesssim_{\varepsilon} (\la \delta)^{\frac{1}{2}}\lpnorm{f}{2}{\T{d}}.
\end{split}
\end{equation}
The last inequality follows from (\ref{endpoint rectangular strichartz}), which, as we noted earlier, is equivalent to Theorem 1.2.
\qed

\bibliographystyle{acm}
\bibliography{aBib}

@book{SoggeFourierIntegrals,
    abstract = {This advanced monograph is concerned with modern treatments of central problems in harmonic analysis. The main theme of the book is the interplay between ideas used to study the propagation of singularities for the wave equation and their counterparts in classical analysis. In particular, the author uses microlocal analysis to study problems involving maximal functions and Riesz means using the so-called half-wave operator. To keep the treatment self-contained, the author begins with a rapid review of Fourier analysis and also develops the necessary tools from microlocal analysis. This second edition includes two new chapters. The first presents Hörmander's propagation of singularities theorem and uses this to prove the Duistermaat-Guillemin theorem. The second concerns newer results related to the Kakeya conjecture, including the maximal Kakeya estimates obtained by Bourgain and Wolff. Offers a self-contained introduction to harmonic and microlocal analysis that is accessible to graduate students. The second edition presents an expanded treatment of microlocal analysis. Includes new chapters on Hörmander's propagation of singularities theorem and the Duistermaat-Guillemin theorem, and on results related to the Kakeya conjecture.},
	Author = {Sogge, C. D.},
	Edition = {Second},
	Publisher = {Cambridge University Press, Cambridge},
	Series = {Cambridge Tracts in Mathematics},
	Title = {Fourier integrals in classical analysis},
	Volume = {210},
	Year = {2017}}

@article{BourgainDemeter2015,
   abstract = {We prove the $\ell^2$ Decoupling Conjecture for compact hypersurfaces with positive definite second fundamental form and also for the cone. This has a wide range of important consequences. One of them is the validity of the Discrete Restriction Conjecture, which implies the full range of expected Lpx,t Strichartz estimates for both the rational and (up to Nε losses) the ir-rational torus. Another one is an improvement in the range for the discrete restriction theorem for lattice points on the sphere. Various applications to Additive Combinatorics, Incidence Geometry and Number Theory are also discussed. Our argument relies on the interplay between linear and multilinear restriction theory.},
   author = {Jean Bourgain and Ciprian Demeter},
   doi = {10.4007/annals.2015.182.1.9},
   issn = {19398980},
   issue = {1},
   journal = {Annals of Mathematics},
   title = {The proof of the $\ell^2$ Decoupling Conjecture},
   volume = {182},
   year = {2015},
}

@book{demeter2020fourier,
  title={Fourier Restriction, Decoupling, and Applications},
  author={Demeter, Ciprian},
  volume={184},
  year={2020},
  series={Cambridge Studies in Advanced Mathematics},
  publisher={Cambridge University Press}
}

@article{Germain2022,
   abstract = {We investigate norms of spectral projectors on thin spherical shells for the Laplacian on tori. This is closely related to the boundedness of resolvents of the Laplacian and the boundedness of norms of eigenfunctions of the Laplacian. We formulate a conjecture and partially prove it.},
   author = {Pierre Germain and Simon L. Rydin Myerson},
   doi = {10.1017/fms.2022.18},
   issn = {20505094},
   journal = {Forum of Mathematics, Sigma},
   title = {Bounds for spectral projectors on tori},
   volume = {10},
   year = {2022},
}

@article{HickmanUniformResolvent,
   author = {Jonathan Hickman},
   doi = {10.5802/mrr.1},
   journal = {Mathematics Research Reports},
   title = { Uniform   L p   Resolvent Estimates on the Torus },
   volume = {1},
   year = {2020},
}

@article {taobilinear,
    AUTHOR = {Tao, T.},
     TITLE = {A sharp bilinear restriction estimate for paraboloids},
   JOURNAL = {Geom. Funct. Anal.},
  FJOURNAL = {Geometric and Functional Analysis},
    VOLUME = {13},
      YEAR = {2003},
    NUMBER = {6},
     PAGES = {1359--1384},
      ISSN = {1016-443X,1420-8970},
   MRCLASS = {47G30 (35A22 42B10 42B35 46E30)},
  MRNUMBER = {2033842},
MRREVIEWER = {Elijah\ Liflyand},
       DOI = {10.1007/s00039-003-0449-0},
       URL = {https://doi.org/10.1007/s00039-003-0449-0},
}

@article{BlairHuangSoggeStrichartz,
title = {Strichartz estimates for the {S}chrödinger equation on negatively curved compact manifolds},
journal = {Journal of Functional Analysis},
volume = {287},
number = {10},
pages = {110613},
year = {2024},
issn = {0022-1236},
doi = {https://doi.org/10.1016/j.jfa.2024.110613},
url = {https://www.sciencedirect.com/science/article/pii/S002212362400301X},
author = {Matthew D. Blair and Xiaoqi Huang and Christopher D. Sogge},
keywords = {Schrödinger's equation, Curvature, Schrödinger curves},
abstract = {We obtain improved Strichartz estimates for solutions of the Schrödinger equation on negatively curved compact manifolds which improve the classical universal results of Burq, Gérard and Tzvetkov [11] in this geometry. In the case where the spatial manifold is a hyperbolic surface we are able to obtain no-loss Lt,xqc-estimates on intervals of length log⁡λ⋅λ−1 for initial data whose frequencies are comparable to λ, which, given the role of the Ehrenfest time, is the natural analog of the universal results in [11]. We also obtain improved endpoint Strichartz estimates for manifolds of nonpositive curvature, which cannot hold for spheres.}
}

@article{bgtstrichartzmanifold,
	Author = {Burq, N. and G{\'e}rard, P. and Tzvetkov, N.},
	Coden = {AJMAAN},
	Fjournal = {American Journal of Mathematics},
	Issn = {0002-9327},
	Journal = {Amer. J. Math.},
	Mrclass = {58J35 (35Q55)},
	Mrnumber = {2058384 (2005h:58036)},
	Number = {3},
	Pages = {569--605},
	Title = {{Strichartz inequalities and the nonlinear {S}chr{\"o}dinger equation on compact manifolds}},
	Url = {http://muse.jhu.edu/journals/american_journal_of_mathematics/v126/126.3burq.pdf},
	Volume = {126},
	Year = {2004},
	Bdsk-Url-1 = {http://muse.jhu.edu/journals/american_journal_of_mathematics/v126/126.3burq.pdf}}

@article {TaoVargasVega1998,
    AUTHOR = {Tao, Terence and Vargas, Ana and Vega, Luis},
     TITLE = {A bilinear approach to the {R}estriction and {K}akeya
              conjectures},
   JOURNAL = {J. Amer. Math. Soc.},
  FJOURNAL = {Journal of the American Mathematical Society},
    VOLUME = {11},
      YEAR = {1998},
    NUMBER = {4},
     PAGES = {967--1000},
      ISSN = {0894-0347,1088-6834},
   MRCLASS = {42B10 (42B25)},
  MRNUMBER = {1625056},
MRREVIEWER = {Anthony\ Carbery},
       DOI = {10.1090/S0894-0347-98-00278-1},
       URL = {https://doi.org/10.1090/S0894-0347-98-00278-1},
}

@article {HerrKwak2024,
    AUTHOR = {Herr, Sebastian and Kwak, Beomjong},
     TITLE = {Strichartz estimates and global well-posedness of the cubic
              {NLS} on {$\Bbb {T}^2$}},
   JOURNAL = {Forum Math. Pi},
  FJOURNAL = {Forum of Mathematics. Pi},
    VOLUME = {12},
      YEAR = {2024},
     PAGES = {Paper No. e14, 21},
      ISSN = {2050-5086},
   MRCLASS = {35Q55 (35A23 52C30)},
  MRNUMBER = {4794808},
MRREVIEWER = {Vedran\ Sohinger},
       DOI = {10.1017/fmp.2024.11},
       URL = {https://doi.org/10.1017/fmp.2024.11},
}

@article {BourgainGuthMultilinear,
    AUTHOR = {Bourgain, Jean and Guth, Lawrence},
     TITLE = {Bounds on oscillatory integral operators},
   JOURNAL = {C. R. Math. Acad. Sci. Paris},
  FJOURNAL = {Comptes Rendus Math\'ematique. Acad\'emie des Sciences. Paris},
    VOLUME = {349},
      YEAR = {2011},
    NUMBER = {3-4},
     PAGES = {137--141},
      ISSN = {1631-073X,1778-3569},
   MRCLASS = {42B20 (42B10 42B15)},
  MRNUMBER = {2769895},
MRREVIEWER = {Dashan\ Fan},
       DOI = {10.1016/j.crma.2010.12.004},
       URL = {https://doi.org/10.1016/j.crma.2010.12.004},
}

@article {KillipVisanStrichartz,
    AUTHOR = {Killip, Rowan and Vi\c{s}an, Monica},
     TITLE = {Scale invariant {S}trichartz estimates on tori and
              applications},
   JOURNAL = {Math. Res. Lett.},
  FJOURNAL = {Mathematical Research Letters},
    VOLUME = {23},
      YEAR = {2016},
    NUMBER = {2},
     PAGES = {445--472},
      ISSN = {1073-2780},
   MRCLASS = {58J05 (35B45 35R01)},
  MRNUMBER = {3512894},
MRREVIEWER = {Elena Cordero},
       DOI = {10.4310/MRL.2016.v23.n2.a8},
       URL = {https://doi.org/10.4310/MRL.2016.v23.n2.a8},
}

@article {huang2024curvature,
    AUTHOR = {Huang, Xiaoqi and Sogge, Christopher D.},
     TITLE = {Curvature and sharp growth rates of log-quasimodes on compact
              manifolds},
   JOURNAL = {Invent. Math.},
  FJOURNAL = {Inventiones Mathematicae},
    VOLUME = {239},
      YEAR = {2025},
    NUMBER = {3},
     PAGES = {947--1008},
      ISSN = {0020-9910},
   MRCLASS = {58J50 (35J05 35P15 35R01)},
  MRNUMBER = {4861130},
       DOI = {10.1007/s00222-025-01315-2},
       URL = {https://doi.org/10.1007/s00222-025-01315-2},
}

@article{HSSchro,
	Author = {Huang, Xiaoqi and Sogge, Christopher D},
	Journal = {Math Research Letters},
	Volume = {29},
	Pages = {727--762},
	Title = {Quasimode and  {S}trichartz estimates for time-dependent {S}chr\"odinger equations with singular potentials},
	Year = {2022}}

@article{KT,
	Author = {Keel, M. and Tao, T.},
	Fjournal = {American Journal of Mathematics},
	Journal = {Amer. J. Math.},
	Number = {5},
	Pages = {955--980},
	Title = {Endpoint {S}trichartz estimates},
	Volume = {120},
	Year = {1998}}

@article{HuangSoggeStrichartz,
      title={Strichartz estimates for the {S}chr\"odinger equation on compact manifolds with nonpositive sectional curvature}, 
      author={Xiaoqi Huang and Christopher D. Sogge},
      journal = {to appear in the Journal of Spectral Theory},
      year={2024},
      eprint={2407.13026},
      archivePrefix={arXiv},
      primaryClass={math.AP},
      url={https://arxiv.org/abs/2407.13026}, 
}

@article {BlairHuangSoggeImprovedSpectral,
    AUTHOR = {Blair, Matthew D. and Huang, Xiaoqi and Sogge, Christopher D.},
     TITLE = {Improved spectral projection estimates},
     YEAR = {2022},
   JOURNAL = {to appear in J. Eur. Math. Soc., arXiv:2211.17266},
}

@book {EvansPDE,
    AUTHOR = {Evans, Lawrence C.},
     TITLE = {Partial differential equations},
    SERIES = {Graduate Studies in Mathematics},
    VOLUME = {19},
 PUBLISHER = {American Mathematical Society},
   ADDRESS = {Providence, RI},
      YEAR = {1998},
     PAGES = {xviii+662},
      ISBN = {0-8218-0772-2},
   MRCLASS = {35-01},
  MRNUMBER = {1625845 (99e:35001)},
MRREVIEWER = {Luigi Rodino},
}

@article {GuthMaldagueWangImprovedDecoupling,
    AUTHOR = {Guth, Larry and Maldague, Dominique and Wang, Hong},
     TITLE = {Improved decoupling for the parabola},
   JOURNAL = {J. Eur. Math. Soc. (JEMS)},
  FJOURNAL = {Journal of the European Mathematical Society (JEMS)},
    VOLUME = {26},
      YEAR = {2024},
    NUMBER = {3},
     PAGES = {875--917},
      ISSN = {1435-9855,1435-9863},
   MRCLASS = {42B10},
  MRNUMBER = {4721026},
MRREVIEWER = {Malabika\ Pramanik},
       DOI = {10.4171/jems/1295},
       URL = {https://doi.org/10.4171/jems/1295},
}

@article {FuGuthMaldagueSuperlevelSetDecoupling,
    AUTHOR = {Fu, Yuqiu and Guth, Larry and Maldague, Dominique},
     TITLE = {Sharp superlevel set estimates for small cap decouplings of
              the parabola},
   JOURNAL = {Rev. Mat. Iberoam.},
  FJOURNAL = {Revista Matem\'atica Iberoamericana},
    VOLUME = {39},
      YEAR = {2023},
    NUMBER = {3},
     PAGES = {975--1004},
      ISSN = {0213-2230,2235-0616},
   MRCLASS = {42B15 (42B20)},
  MRNUMBER = {4603641},
       DOI = {10.4171/rmi/1393},
       URL = {https://doi.org/10.4171/rmi/1393},
}

@misc{McConnell2025ShortTimeStrichartz,
      title={On Lattice Points, Short-Time Estimates, and Global Well-posedness of the Quintic NLS on $\mathbb{T}$}, 
      author={Ryan McConnell},
      year={2025},
      eprint={2310.01638},
      archivePrefix={arXiv},
      primaryClass={math.AP},
      url={https://arxiv.org/abs/2310.01638}, 
}

@article {Schippa2023Strichartz,
    AUTHOR = {Schippa, Robert},
     TITLE = {Refinements of {S}trichartz estimates on tori and
              applications},
   JOURNAL = {Math. Ann.},
  FJOURNAL = {Mathematische Annalen},
    VOLUME = {391},
      YEAR = {2025},
    NUMBER = {3},
     PAGES = {3245--3294},
      ISSN = {0025-5831,1432-1807},
   MRCLASS = {42B37 (35B45)},
  MRNUMBER = {4865216},
MRREVIEWER = {Pham\ Trieu\ Duong},
       DOI = {10.1007/s00208-024-03001-9},
       URL = {https://doi.org/10.1007/s00208-024-03001-9},
}

@article {GuoLiYungImprovedStrichartz,
    AUTHOR = {Guo, Shaoming and Li, Zane Kun and Yung, Po-Lam},
     TITLE = {Improved discrete restriction for the parabola},
   JOURNAL = {Math. Res. Lett.},
  FJOURNAL = {Mathematical Research Letters},
    VOLUME = {30},
      YEAR = {2023},
    NUMBER = {5},
     PAGES = {1375--1409},
      ISSN = {1073-2780,1945-001X},
   MRCLASS = {11K70 (11L07 42B10)},
  MRNUMBER = {4747865},
MRREVIEWER = {Jingwei\ Guo},
       DOI = {10.4310/mrl.2023.v30.n5.a4},
       URL = {https://doi.org/10.4310/mrl.2023.v30.n5.a4},
}

@misc{GermainMyersonPezzi,
      title={Bounds for spectral projectors on the three-dimensional torus}, 
      author={Pierre Germain and Simon L. Rydin Myerson and Daniel Pezzi},
      year={2025},
      eprint={2508.05573},
      archivePrefix={arXiv},
      primaryClass={math.AP},
      url={https://arxiv.org/abs/2508.05573}, 
}

@article{bourgain1993strichartzI,
  title={Fourier transform restriction phenomena for certain lattice subsets and applications to nonlinear evolution equations: Part $\text{I}$: Schr\"odinger equations},
  author={Bourgain, Jean},
  journal={Geometric \& Functional Analysis GAFA},
  volume={3},
  number={3},
  pages={107--156},
  year={1993},
  publisher={Springer}
}

@article{Deng2017Strichartz,
title = {Strichartz estimates for the {S}chrödinger equation on irrational tori},
journal = {Journal of Functional Analysis},
volume = {273},
number = {9},
pages = {2846-2869},
year = {2017},
issn = {0022-1236},
doi = {https://doi.org/10.1016/j.jfa.2017.05.011},
url = {https://www.sciencedirect.com/science/article/pii/S0022123617302136},
author = {Yu Deng and Pierre Germain and Larry Guth},
keywords = {Linear Schrodinger equation, Strichartz estimates, Irrational tori},
abstract = {We prove Strichartz estimates over large time scales for the Schrödinger equation set on irrational tori. They are optimal for Lebesgue exponents p>6.}
}
\newcommand{\Addresses}{{
  \bigskip
  \footnotesize

  (CQ) \textsc{Department of Mathematics, Johns Hopkins University,
    Baltimore, MD 21218}\par\nopagebreak
  \textit{E-mail address: }\texttt{cquinn24@jhu.edu}

  \medskip
}}

\Addresses

\end{document}